\def\BibTeX{{\rm B\kern-.05em{\sc i\kern-.025em b}\kern-.08em
    T\kern-.1667em\lower.7ex\hbox{E}\kern-.125emX}}
\newif\iffullversion
\newif\ifshowchanges
\newif\ifsupplement
  \newcommand{\rev}[1]{\textcolor{blue}{#1}}
  \newcommand{\rev}[1]{#1}
\newtheorem{theorem}{Theorem}[section]
\newtheorem{lemma}[theorem]{Lemma}
\newtheorem{corollary}[theorem]{Corollary}
\newtheorem{proposition}[theorem]{Proposition}
\newtheorem{assumption}[theorem]{Assumption}
\newtheorem{definition}[theorem]{Definition}
\newtheorem{remark}[theorem]{Remark}
\newtheorem*{remark*}{Remark}
\newtheorem{exmp}[theorem]{Example}
\begin{document}

\ifsupplement
\else
\renewcommand{\include}{\input}
\fi

\ifshowchanges\linenumbers\fi

\ifsupplement
    \title{Supplementary Material for\\
           ``Optimization via a Control-Centric Framework''}
    \author{Liraz Mudrik, Isaac Kaminer, Sean Kragelund, and Abram H.~Clark
    \thanks{This work was supported in part by the Office of Naval Research
    Science of Autonomy Program under Grant No.\ N0001425GI01545 and
    Consortium for Robotics Unmanned Systems Education and Research at the
    Naval Postgraduate School.}}
    \maketitle

    \noindent
    This document contains the full proofs deferred from the main paper.
    Theorem, equation, and section numbers of the form ``Theorem~IV.5,''
    ``Eq.~(44),'' etc.\ refer to the main paper
    \emph{``Optimization via a Control-Centric Framework.''}
    \vspace{0.5em}
\fi

\title{Optimization via a Control-Centric Framework}
    \author{Liraz Mudrik, Isaac Kaminer, Sean Kragelund, and Abram H. Clark
    \thanks{This work was supported in part by the Office of Naval Research Science of Autonomy Program under Grant No.\ N0001425GI01545 and Consortium for Robotics Unmanned Systems Education and Research at the Naval Postgraduate School.}
    \thanks{L. Mudrik, I. Kaminer, and S. Kragelund are with the Department of Mechanical and Aerospace Engineering, Naval Postgraduate School, Monterey, CA, 93943.}
    \thanks{A. H. Clark is with the Department of Physics, Naval Postgraduate School, Monterey, CA, 93943.}}%

    \markboth{Journal of \LaTeX\ Class Files,~Vol.~XX, No.~X, XXXX~202X}%
    {Mudrik \MakeLowercase{\textit{et al.}}: Optimization via a Control-Centric Framework}
    
    \IEEEpubid{0000--0000/00\$00.00~\copyright~202X IEEE}
	
	\maketitle

    \begin{abstract}
    Optimization plays a central role in intelligent systems and cyber-physical technologies, where  speed and reliability of convergence directly impact performance. \rev{In control theory, optimization is the foundation of widely-used design methodologies such as linear quadratic regulation, $H_\infty$ control, and model predictive control.} In contrast, this paper develops a control-centric framework for optimization itself, where algorithms are constructed directly from Lyapunov stability principles rather than being proposed first and analyzed afterward. 
    A key element is the stationarity vector, which encodes first-order optimality conditions and enables Lyapunov-based convergence analysis. 
    By pairing a Lyapunov function with a selectable decay law, we obtain continuous-time dynamics with guaranteed exponential, finite-time, fixed-time, or prescribed-time convergence. 
    \rev{Within this framework, we introduce three feedback realizations of increasing restrictiveness: the Hessian-gradient, Newton, and gradient dynamics. Each realization shapes the decay of the stationarity vector to achieve the desired rate.}
    \rev{These constructions unify unconstrained optimization, extend to constrained problems via Lyapunov-consistent primal-dual dynamics, and broaden results for minimax and generalized Nash equilibrium seeking problems beyond exponential stability.}
    \rev{In total, the framework spans six problem classes and four convergence regimes, yielding a unified design recipe across twenty-four combinations, nine of which, to the best of our knowledge, have no direct continuous-time counterpart in the prior literature.}
    The framework provides systematic design tools for optimization algorithms in control and game-theoretic problems.
    \end{abstract}

    \begin{IEEEkeywords}
    Lyapunov methods; control-centric optimization methods; gradient methods; Newton method; constrained optimization; finite-time convergence; feedback.
    \end{IEEEkeywords}

    \section{Introduction}
    \IEEEPARstart{O}{ptimization} is a cornerstone of intelligent systems and cyber-physical technologies, where it underlies decision-making, resource allocation, and learning-based control. 
    In modern control theory, optimization-based approaches are likewise central. 
    Prominent examples include the linear quadratic regulator (LQR)~\cite{kalman_contributions_1960}, $H_2$ and $H_{\infty}$ control~\cite{doyle_state-space_1989}, and model predictive control (MPC)~\cite{rawlings_model_2024}, where the control design is obtained by formulating and solving an optimization problem. 
    These methods may be viewed as optimization-centric frameworks for control, since the optimization problem itself constitutes the foundation upon which control laws are built.    
    
    In contrast to optimization-centric approaches to control, this paper focuses on a control-centric framework for optimization itself, where continuous-time dynamics are designed as optimizers by directly invoking Lyapunov stability theory and its modern extensions. 
    This perspective parallels the classical theory of control Lyapunov functions (CLFs), in which stabilizing feedback laws are constructed directly from the Lyapunov function. 
    Foundational results include Artstein's equivalence between CLF existence and feedback stabilizability~\cite{artstein_stabilization_1983}, Sontag's universal construction of continuous stabilizing feedbacks~\cite{sontag_lyapunov-like_1983,sontag_universal_1989}, and Brockett's necessary condition for continuous static stabilization~\cite{brockett_asymptotic_1983}. 
    Building on this foundation, we introduce the notion of optimization Lyapunov functions (OLFs), which transfer these control-theoretic principles to the design of optimization algorithms. 
    In particular, tools such as finite-time (FT) stability~\cite{bhat_finite-time_2000}, fixed-time (FxT) stability~\cite{polyakov_nonlinear_2012}, and prescribed-time (PT) stability~\cite{song_time-varying_2017} provide systematic ways to guarantee convergence with user-specified temporal properties. 
    This control-centric viewpoint enables the construction of optimizer dynamics that meet explicit convergence requirements, rather than analyzing convergence only after an algorithm has been specified.
    
    \rev{A substantial body of prior work has applied Lyapunov-based methods to continuous-time optimization, primarily as analysis rather than design. The study of continuous-time primal-dual dynamics for constrained saddle-point problems originates with the seminal work of Arrow, Hurwicz, and Uzawa~\cite{arrow_studies_1958}, which laid the foundation for subsequent developments. 
    Table~\ref{tab:landscape} locates our contributions in the landscape of continuous-time optimization across the different problem classes and convergence regimes.
    Complementary perspectives connect continuous-time optimizer design to optimal-control principles~\cite{ross_optimal_2019,ross_derivation_2023} and to the discrete-time synthesis of accelerated first-order algorithms via integral quadratic constraints~\cite{scherer_convex_2021,scherer_tutorial_2025}.}

    \begin{table*}[!t]
    \centering
    \caption{\rev{Landscape of continuous-time optimization. Every cell includes OLF, indicating that this paper provides a novel Hessian-gradient dynamics (HGD) realization. Shaded cells have no direct continuous-time counterpart in prior literature; unshaded cells cite the closest prior work.}}
    \label{tab:landscape}
    \setlength{\tabcolsep}{3.5pt}
    \renewcommand{\arraystretch}{1.15}
    \rev{\begin{tabular}{l|p{3.2cm}|p{3.0cm}|p{3.0cm}|p{3.0cm}}
    \hline
    \textbf{Problem} & \textbf{Asymp./Exp.} & \textbf{FT} & \textbf{FxT} & \textbf{PT} \\
    \hline
    \hline
    Unconstrained optimization 
    & \textit{\cite{brown_effective_1989}}, {\color{green!60!black}\textbf{OLF}}
    & \textit{\cite{cortes_finite-time_2006,romero_finite-time_2020}}, {\color{green!60!black}\textbf{OLF}}
    & \textit{\cite{garg_fixed-time_2021,budhraja_breaking_2022}}, {\color{green!60!black}\textbf{OLF}}
    & \textit{\cite{aal_controlled_2025,romero_finite-time_2020}}, {\color{green!60!black}\textbf{OLF}} \\
    Unconstrained minimax 
    & \textit{\cite{cherukuri_saddle-point_2017,cherukuri_role_2018}}, {\color{green!60!black}\textbf{OLF}}
    & \textit{\cite{garg_fixed-time_2021}}, {\color{green!60!black}\textbf{OLF}}
    & \textit{\cite{garg_fixed-time_2021,garg_fixed-time_2022,ozaslan_exponential_2024}}, {\color{green!60!black}\textbf{OLF}}
    & \cellcolor{green!20}\textbf{OLF} \\
    Nash equilibrium (NE) 
    & \textit{\cite{gharesifard_distributed_2013}}, {\color{green!60!black}\textbf{OLF}}
    & \cellcolor{green!20}\textbf{OLF}
    & \textit{\cite{poveda_fixed-time_2023}}, {\color{green!60!black}\textbf{OLF}}
    & \textit{\cite{xue_nash_2026}}, {\color{green!60!black}\textbf{OLF}} \\
    Constrained optimization 
    & \textit{\cite{feijer_stability_2010,cherukuri_asymptotic_2016,cherukuri_saddle-point_2017,cherukuri_role_2018,allibhoy_control-barrier-function-based_2024}}, {\color{green!60!black}\textbf{OLF}}
    & \textit{\cite{chen_convex_2018,ozaslan_exponential_2024}}, {\color{green!60!black}\textbf{OLF}}
    & \textit{\cite{garg_fixed-time_2023,ozaslan_exponential_2024}}, {\color{green!60!black}\textbf{OLF}}
    & \textit{\cite{chen_achieving_2026}}, {\color{green!60!black}\textbf{OLF}} \\
    Constrained minimax 
    & \cellcolor{green!20}\textbf{OLF}
    & \cellcolor{green!20}\textbf{OLF}
    & \cellcolor{green!20}\textbf{OLF}
    & \cellcolor{green!20}\textbf{OLF} \\
    Generalized NE (GNE) 
    & \textit{\cite{sun_continuous-time_2021,huang_distributed_2024}}, {\color{green!60!black}\textbf{OLF}}
    & \cellcolor{green!20}\textbf{OLF}
    & \cellcolor{green!20}\textbf{OLF}
    & \cellcolor{green!20}\textbf{OLF} \\
    \hline
    \end{tabular}}
    \end{table*}

    \IEEEpubidadjcol
    
    Despite this progress, most existing methods follow the pattern of proposing an optimization algorithm first and only then analyzing its convergence using control-theoretic tools. 
    In contrast, our contribution is a framework for designing optimizers directly according to desired convergence properties. 
    This perspective not only recovers known results as special cases but also closes several gaps in the literature. 
    Our framework further enables the design of second-order continuous-time optimizers that exploit curvature information without requiring explicit Hessian inversion, thus extending beyond the current state-of-the-art.

    \rev{The main contribution of this paper is the introduction of the optimization Lyapunov function (OLF) framework, which adapts the classical notion of CLFs to continuous-time optimization by decoupling the design of the stabilizing feedback from the specification of the decay law. The familiar exponential, FT, FxT, and PT behaviors are presented as representative examples of this general design capability. Within this formulation, we develop three continuous-time realizations of increasing restrictiveness: the Hessian-gradient dynamics (HGD), Newton dynamics (ND), and gradient dynamics (GD), which share the same Lyapunov-based synthesis principle while differing in structure and computational complexity. The result is a unified design recipe that, to the best of our knowledge, closes nine open problem-rate combinations in the continuous-time optimization literature while providing a principled alternative in the remaining fifteen (see Table~\ref{tab:landscape}). HGD is the universal realization, used in every theorem; ND requires an invertible Jacobian of the stationarity vector and applies in the strongly convex and strongly monotone settings; GD applies in the unconstrained classes. The framework extends to constrained programs via Lyapunov-consistent Karush-Kuhn-Tucker (KKT) dynamics, and to minimax and GNE seeking problems, where analogous OLF designs yield convergence guarantees beyond the asymptotic or exponential settings available in prior work.}
    
    The remainder of this paper is organized as follows. 
    Section~\ref{sec:back} reviews the required background and introduces a family of convergence laws covering exponential, FT, FxT, and PT regimes. 
    Section~\ref{sec:control_centric_olf} formalizes the notion of OLFs and presents the control-centric methodology for constructing optimizer dynamics with selectable convergence rates. 
    Sections~\ref{sec:constrained}-\ref{sec:gne} apply this methodology to different problem classes, beginning with constrained optimization and continuing with minimax formulations and GNE seeking problems. 
    Finally, Section~\ref{sec:conc} concludes the paper.

    \section{Mathematical Background}
    \label{sec:back}
    In this section, we introduce the general mathematical framework and the assumptions that will be used throughout the paper. These assumptions are kept minimal and apply across the different problem classes treated in later sections. Additional assumptions required for specific cases (e.g., convexity, constraint qualifications) will be stated within their respective sections.

    We begin by presenting the notation used in this paper. 
    Vectors are denoted in bold, e.g., $\mathbf{x}\in\mathbb{R}^n$, and $\|\cdot\|$ denotes the Euclidean norm. 
    \rev{We write $J\in\mathcal{C}^k$ to denote that $J$ is $k$-times continuously differentiable with respect to its arguments.
    For $J\in\mathcal{C}^2$}, its gradient and Hessian are denoted by $\nabla J(\mathbf{x})$ and $\nabla^2 J(\mathbf{x})$, respectively. 
    Inner products are written as $\langle \mathbf{u},\mathbf{v}\rangle = \mathbf{u}^\top \mathbf{v}$. 
    For vectors (or blocks) $\mathbf{v}_1,\dots,\mathbf{v}_k$, we use the stacking operator
    \begin{equation}
    \operatorname{col}(\mathbf{v}_1,\dots,\mathbf{v}_k) := 
    \begin{bmatrix} \mathbf{v}_1^\top & \cdots & \mathbf{v}_k^\top \end{bmatrix}^\top .
    \end{equation}

    We also make the following underlying assumptions, which will be used throughout this work\rev{, even if not explicitly stated.}
    \begin{assumption}
    \label{ass:smooth}
    The objective function $J(\mathbf{x})$ is continuously differentiable and has a locally Lipschitz continuous gradient.
    \end{assumption}
    \begin{assumption}
    \label{ass:finite}
    The optimization problem admits a finite optimal value, i.e., $J^* = \min_{\mathbf{x}} J(\mathbf{x})$.
    \end{assumption}

    We emphasize that, under these standing assumptions, the convergence results established in this paper are generally local, unless stronger properties such as strong convexity or global convexity-concavity are imposed.
    \subsection{Stationarity Vector}
    The central object in our framework is the stationarity vector $\mathbf{S}\colon\mathbb{R}^N\to\mathbb{R}^N$, which encodes the stationarity conditions of the optimization problem at hand. The argument $\mathbf{z}\in\mathbb{R}^N$ stacks all relevant variables, primal and dual, and the dimension $N$ varies depending on the problem class (unconstrained, constrained, minimax, or GNE), as specialized in Sections~\ref{sec:constrained}--\ref{sec:gne}. 
    The unifying principle is that $\mathbf{S}(\mathbf{z}^\star)=\mathbf{0}$ at any stationary point $\mathbf{z}^\star$.

    \begin{exmp}[Stationarity in Unconstrained Optimization]
        For the unconstrained minimization problem $\min_{\mathbf{x}} J(\mathbf{x})$, the stationarity vector reduces to the gradient, $\mathbf{S}(\mathbf{z}) = \nabla J(\mathbf{x})$, where $\mathbf{z} = \mathbf{x}$. 
        Hence, we consider the quadratic Lyapunov candidate
        \begin{equation}
            V(\mathbf{x}) = \tfrac{1}{2}\|\mathbf{S}(\mathbf{z})\|^2
            = \tfrac{1}{2}\|\nabla J(\mathbf{x}) \|^2,
            \label{eq:lyap}
        \end{equation}
        which vanishes precisely at stationary points.
    \end{exmp} 
    Throughout the paper, feedback dynamics will be designed so that \rev{$\dot V(\mathbf{z})$} satisfies a predefined convergence law, such as exponential, FT, FxT, or PT decay, as detailed below.

    \subsection{Desired Convergence Properties}
    \label{sec:conv_prop}
    We introduce a family of decay conditions for Lyapunov functions that generate different convergence behaviors. 
    Rather than analyzing specific algorithms, we adopt a template-based viewpoint: given a candidate Lyapunov function $V$, the choice of a decay inequality $\dot V \leq -\sigma(V,t)$ determines the temporal properties of convergence. 
    By varying the form of $\sigma$, one recovers exponential, FT, FxT, and PT regimes. 
    The following lemmas summarize well-established results from control and optimization theory that will serve as fundamental ingredients throughout the remainder of the paper.
    
    \rev{\begin{lemma}[Exponential Stability~\cite{khalil_nonlinear_2002}]
    \label{lem:exp}
    If $\dot V(\mathbf{z})\le -\alpha V(\mathbf{z})$ for some $\alpha>0$, then $V(\mathbf{z}(t))\le V(\mathbf{z}(0))e^{-\alpha t}$.
    \end{lemma}}
    
    \rev{\begin{lemma}[FT Stability~\cite{bhat_finite-time_2000}]
    \label{lem:finite}
    If $\dot V(\mathbf{z})\le -c V(\mathbf{z})^\alpha$ for some $c>0$ and $\alpha\in(0,1)$, then $V(\mathbf{z}(t))$ reaches zero in finite time $T(\mathbf{z}(0))\le V(\mathbf{z}(0))^{1-\alpha}/[c(1-\alpha)]$.
    \end{lemma}}
    
    \rev{\begin{lemma}[FxT Stability~\cite{polyakov_nonlinear_2012}]
    \label{lem:fixed}
    If $\dot V(\mathbf{z})\le -c_1 V(\mathbf{z})^\alpha - c_2 V(\mathbf{z})^\beta$ for some $c_1,c_2>0$, $\alpha\in(0,1)$, and $\beta>1$, then $V(\mathbf{z}(t))$ converges to zero in fixed time $T\le 1/[c_1(1-\alpha)]+1/[c_2(\beta-1)]$. This bound is independent of $\mathbf{z}(0)$, the defining feature distinguishing FxT from FT stability.
    \end{lemma}}
    
    \rev{\begin{lemma}[PT Stability~\cite{song_time-varying_2017}]
    \label{lem:prescribed}
    If $\dot V(\mathbf{z})\le -\dfrac{\mu}{T-t}\,V(\mathbf{z})$ for some $\mu>0$ and prescribed $T>0$, $t\in[0,T)$, then $V(\mathbf{z}(t))$ converges to zero exactly at $t=T$, independent of the initial condition.
    \end{lemma}}
    This work focuses on these four convergence regimes, but we note that other options exist for which the generalization of our framework to them is straightforward.
    For other convergence regimes, the reader is referred to~\cite{song_prescribed-time_2023}.

    The convergence laws introduced here are independent of any specific problem class. 
    They serve as rate templates that can be combined with various Lyapunov constructions in the subsequent sections. 
    In the following developments, these laws are employed to design optimizer dynamics for unconstrained and constrained optimization, minimax problems, and GNE seeking, where the desired convergence behavior is achieved by selecting the appropriate decay condition.

    \section{Control-Centric Framework}
    \label{sec:control_centric_olf}
    This section develops a control-centric recipe for constructing continuous-time dynamics that solve optimization and equilibrium problems via Lyapunov methods. 
    The key idea is to specialize a Lyapunov-driven design to a “plant” with ideal dynamics
    \begin{equation}
        \dot{\mathbf{z}} \;=\; \mathbf{u},
        \label{eq:plant}
    \end{equation}
    and to choose the Lyapunov function so that its set of minima coincides with the stationarity set of the optimization problem. 
    \rev{
    This integrator structure is, in fact, necessary for the convergence of first-order algorithms~\cite{scherer_convex_2021}.}
    For any candidate $V:\mathbb{R}^N\to\mathbb{R}_{\ge 0}$, the derivative along~\eqref{eq:plant} is
    \begin{equation}
        \dot V(\mathbf{z},t) \;=\; \nabla V(\mathbf{z})^\top \mathbf{u}(\mathbf{z},t).
        \label{eq:Vdot_general}
    \end{equation}
    Our aim is to realize $\mathbf{u}(\mathbf{z},t)$ so that the decay inequality
    \begin{equation}
        \dot V(\mathbf{z},t) \;\le\; -\,\sigma\!\big(V(\mathbf{z}),t\big)
        \label{eq:V_decay}
    \end{equation}
    holds, where the timing law $\sigma(\cdot,\cdot)$ encodes the desired convergence rate, e.g., exponential, FT, FxT, or PT, and will be instantiated via the rate lemmas collected in Sec.~\ref{sec:conv_prop}.

    \subsection{Optimization Lyapunov Function (OLF)}
    A central ingredient of our framework is the OLF which, 
    analogously to Lyapunov functions for stability, certifies convergence of a dynamical optimizer to the stationarity set.
    This notion allows us to treat algorithm design as a control problem: the OLF encodes the desired optimality conditions, and the choice of decay law shapes the rate of convergence.
    
    \begin{definition}[Optimization Lyapunov Function]
    \label{def:olf}
    Let $\mathcal{S} \!=\! \{\mathbf{z}\in \mathbb{R}^N : \mathbf{S}(\mathbf{z})=\mathbf{0}\}$ denote the stationarity set. An optimization Lyapunov function is any continuously differentiable map $V:\mathbb{R}^N\!\to\!\mathbb{R}_{\ge 0}$ such that $V(\mathbf{z})=0$ if and only if $\mathbf{z}\in\mathcal{S}$.
    \end{definition}
    
    
    
    \subsection{OLF Regularity Condition}
    Because $\mathbf{u}$ enters~\eqref{eq:Vdot_general} through $\nabla V$, one cannot force $\dot V<0$ at states where $\nabla V=\mathbf{0}$ while $V>0$.
    
    \begin{assumption}[OLF Regularity]
    \label{ass:OLF_reg}
    Let $V(\mathbf{z})$ be an OLF as defined in~\ref{def:olf}. 
    \rev{There exists an open set $\mathcal{U}\subseteq\mathbb{R}^N$ with $\mathcal{S}\subseteq\mathcal{U}$ such that}
    \begin{equation}
        \nabla V(\mathbf{z}) \;\neq\; \mathbf{0}
        \label{eq:nonsing}
    \end{equation}
    \rev{for all $\mathbf{z}\in\mathcal{U}$ with} $V(\mathbf{z})\neq 0$.
    \end{assumption}
    If $\nabla V(\mathbf{z})=\mathbf{0}$ while $V(\mathbf{z})>0$, then~\eqref{eq:Vdot_general} gives $\dot V(\mathbf{z},t)=0$ for all admissible $\mathbf{u}(\mathbf{z},t)$. Hence, no feedback based on $V$ can enforce $\dot V<0$ \rev{at such points}.
    This assumption holds automatically in certain structured problems, such as a strongly convex cost function with convex inequality and affine equality constraints. We establish this formally in the ensuing results.
    
    When the quadratic OLF, $V(\mathbf{z})=\tfrac12\|\mathbf{S}(\mathbf{z})\|^2$, is used, the OLF regularity assumption becomes
    \begin{equation}
        \nabla V(\mathbf{z}) \;=\; \nabla \mathbf{S}(\mathbf{z})^\top \mathbf{S}(\mathbf{z})
        \;\neq\; \mathbf{0}
        \label{eq:nonortho}
    \end{equation}
    whenever $\mathbf{S}(\mathbf{z})\neq \mathbf{0}$. Equivalently,
    \begin{equation}
    \mathbf{S}(\mathbf{z}) \;\notin\; \ker\big(\nabla \mathbf{S}(\mathbf{z})^\top\big),
    \label{eq:SnotKer}
    \end{equation}
    where $\ker(\cdot)$ denotes the null space. \rev{We emphasize that OLF regularity is a condition on $\nabla V$, not on $\nabla \mathbf{S}$: it does \emph{not} require $\nabla\mathbf{S}(\mathbf{z})$ to be invertible, only that~\eqref{eq:SnotKer} holds.}
    
    
    \subsection{Feedback Designs Enforcing the Convergence Law}
    We present three constructive designs that enforce the decay law~\eqref{eq:V_decay}; they are ordered by restrictiveness on $\nabla V(\mathbf{z})$. Each can be paired with any admissible convergence law $\sigma(\cdot,\cdot)$.
    
    \begin{lemma}[Hessian-Gradient Dynamics (HGD)]
    \label{lem:w_pseudo}
    Let $V(\mathbf{z})$ be an OLF.
    If Assumption~\ref{ass:OLF_reg} holds \rev{on $\mathcal{U}$}, then \rev{for every $\mathbf{z}\in\mathcal{U}$ the} feedback
    \begin{equation}
        \mathbf{u}(\mathbf{z},t) \;=\; -\,\frac{\sigma\!\big(V(\mathbf{z}),t\big)}{\|\nabla V(\mathbf{z})\|^2}\,\nabla V(\mathbf{z})
        \label{eq:u_w}
    \end{equation}
    enforces~\eqref{eq:V_decay} with equality $\dot V=-\sigma(V,t)$,
    where the convergence law $\sigma$ takes one of the following forms:
    \begin{equation}
    \sigma(V,t) \;=\;
    \begin{cases}
    c\,V, & \text{(Exp.),} \\[4pt]
    k\,V^{\gamma}, & \text{(FT), } \; 0<\gamma<1, \\[4pt]
    a\,V^{\gamma} + b\,V^{\delta}, & \text{(FxT), } \; 0<\gamma<1<\delta, \\[4pt]
    \dfrac{\mu}{T - t}\,V, & \text{(PT), } \; t\in[0,T),
    \end{cases}
    \label{eq:sigma_def}
    \end{equation}
    with positive constants $a,b,c,k,\mu$ and $T$.
    \end{lemma}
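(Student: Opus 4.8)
The plan is to verify the claimed identity by a direct substitution and then to invoke the rate lemmas collected in Section~\ref{sec:conv_prop}. First I would observe that Assumption~\ref{ass:nonsing} guarantees $\nabla V(\mathbf{z}) \neq \mathbf{0}$ whenever $V(\mathbf{z}) > 0$, so the feedback~\eqref{eq:u_w} is well defined along any trajectory on which $V > 0$; on the stationarity set itself $V = 0$ and there is nothing to prove. Substituting~\eqref{eq:u_w} into the chain rule~\eqref{eq:Vdot_general} gives
\[
\dot V(\mathbf{z},t) = \nabla V(\mathbf{z})^\top \mathbf{u}(\mathbf{z},t) = -\frac{\sigma\big(V(\mathbf{z}),t\big)}{\|\nabla V(\mathbf{z})\|^2}\, \nabla V(\mathbf{z})^\top \nabla V(\mathbf{z}) = -\sigma\big(V(\mathbf{z}),t\big),
\]
which is exactly~\eqref{eq:V_decay} with equality. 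This is the whole of the enforcement claim; it is a one-line computation requiring no estimates.

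The second part --- that each row of~\eqref{eq:sigma_def} yields the advertised convergence behavior --- then follows by matching $\sigma$ to the hypothesis of the corresponding rate lemma applied to the closed-loop identity $\dot V = -\sigma(V,t)$. For $\sigma = cV$ one obtains $\dot V = -cV$, so Lemma~\ref{lem:exp} gives exponential decay. For $\sigma = kV^{\gamma}$ with $0 < \gamma < 1$, Lemma~\ref{lem:finite} applies with the roles $c = k$ and $\alpha = \gamma$, yielding FT convergence with the stated settling-time estimate. For $\sigma = aV^{\gamma} + bV^{\delta}$ with $0 < \gamma < 1 < \delta$, Lemma~\ref{lem:fixed} applies with $(c_1,c_2,\alpha,\beta) = (a,b,\gamma,\delta)$, giving FxT convergence. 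For $\sigma = \tfrac{\mu}{T-t}V$ on $[0,T)$, writing $\mu = cT$ with $c = \mu/T > 0$ recovers the form $-c\tfrac{T}{T-t}V$ of Lemma~\ref{lem:prescribed}, so $V(\mathbf{z}(t)) \to 0$ exactly at $t = T$ independently of $V(\mathbf{z}(0))$.

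I do not anticipate a genuine obstacle, since the argument reduces to one substitution plus four invocations of earlier lemmas. The only point deserving a word of care is the behavior of~\eqref{eq:u_w} as a trajectory approaches $\mathcal{S}$: both $\sigma(V,t)$ and $\|\nabla V(\mathbf{z})\|^2$ vanish there, so one should argue --- or assume, as is standard in this line of work --- that the feedback remains sufficiently regular for a solution to exist up to the relevant settling time and that $V$ is absolutely continuous along it, so that the integral form underlying the rate lemmas applies. For the quadratic OLF $V = \tfrac12\|\mathbf{S}(\mathbf{z})\|^2$ this regularity near $\mathcal{S}$ is precisely what the nonsingularity (nonorthogonality) condition~\eqref{eq:nonortho} is designed to provide, and it is the single place where the local caveat of Section~\ref{sec:back} is actually invoked; everything else is mechanical.
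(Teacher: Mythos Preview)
Your proposal is correct and follows essentially the same approach as the paper: a direct substitution of~\eqref{eq:u_w} into~\eqref{eq:Vdot_general} to obtain $\dot V=-\sigma(V,t)$, followed by invocation of Lemmas~\ref{lem:exp}--\ref{lem:prescribed} for each choice of $\sigma$. Your treatment is simply more detailed, in particular the explicit matching of parameters (e.g., writing $\mu=cT$ to recover the form of Lemma~\ref{lem:prescribed}) and the remark on well-definedness near $\mathcal{S}$, which the paper leaves implicit.
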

    The term Hessian-gradient dynamics (HGD) is motivated by the unconstrained case: when $\mathbf{S}(\mathbf{z}) = \nabla J(\mathbf{x})$ with a quadratic Lyapunov function~\eqref{eq:lyap}, the feedback involves the Hessian-gradient product $\nabla^2 J(\mathbf{x})\,\nabla J(\mathbf{x})$. This yields a normalized second-order descent distinct from both gradient and Newton dynamics.

    \begin{proof}
    Substitute \eqref{eq:u_w} into \eqref{eq:Vdot_general} to get
    \begin{equation}
        \dot V
        = \nabla V^\top \!\Big(-\,\tfrac{\sigma(V,t)}{\|\nabla V\|^2}\,\nabla V\Big)
        = -\,\sigma(V,t).
    \end{equation}
    The cases in Eq.~\eqref{eq:sigma_def} follow directly from Lemmas~\ref{lem:exp}-\ref{lem:prescribed}.
    \end{proof}

    \begin{exmp}[HGD for Unconstrained Optimization] \label{ex:hgd_unconstrained}
    Let $\mathbf{S}(\mathbf{x})=\nabla J(\mathbf{x})$ and $V(\mathbf{x})=\tfrac12\|\mathbf{S}(\mathbf{x})\|^2$. Define the Hessian-gradient product
    \begin{equation}
    \mathbf{w}(\mathbf{x}) \;=\; \nabla^2 J(\mathbf{x})\,\nabla J(\mathbf{x}).
    \label{eq:hgd_uc_w}
    \end{equation}
    Assume $\mathbf{w}(\mathbf{x})\neq\mathbf{0}$ whenever $\nabla J(\mathbf{x})\neq\mathbf{0}$, i.e., that Assumption~\ref{ass:OLF_reg} holds \rev{on $\mathcal{U}$}. 
    Set the feedback
    \begin{equation}
    \mathbf{u}(\mathbf{x},t) \;=\; -\,\frac{\sigma\!\big(V(\mathbf{x}),t\big)}{\|\mathbf{w}(\mathbf{x})\|^2}\,\mathbf{w}(\mathbf{x}),
    \label{eq:hgd_uc_u}
    \end{equation}
    valid for any decay law $\sigma$ in~\eqref{eq:sigma_def}. 
    Then along trajectories,
    \begin{equation}
    \dot V \;=\; \nabla J^\top \nabla^2 J \,\mathbf{u}
    \;=\; -\,\sigma\!\big(V(\mathbf{x}),t\big),
    \label{eq:hgd_uc_Vdot}
    \end{equation}
    so the timing law is enforced with equality.
    
    If $J(\mathbf{x})=\tfrac12\mathbf{x}^\top A\mathbf{x}-\mathbf{b}^\top\mathbf{x}$ with $A\succeq 0$ and \rev{$\mathbf{b}\in\operatorname{range}(A)$}, then $\nabla J=A\mathbf{x}-\mathbf{b}$ and $\mathbf{w}=A\nabla J\neq\mathbf{0}$ whenever $\nabla J\neq\mathbf{0}$, so \eqref{eq:hgd_uc_u} yields \eqref{eq:hgd_uc_Vdot} under any $\sigma$ in \eqref{eq:sigma_def}.
    \end{exmp}
    
        \rev{Lemma~\ref{lem:w_pseudo} applies to any OLF. The two remaining realizations, and the analysis in the rest of the paper, adopt the standard quadratic candidate}
    \begin{equation}
        V(\mathbf{z}) \;=\; \tfrac{1}{2}\big\|\mathbf{S}(\mathbf{z})\big\|^2.
        \label{eq:V_quad}
    \end{equation}
    
    \begin{lemma}[Newton Dynamics (ND)]
    \label{lem:pseudo}
    Suppose $\nabla \mathbf{S}(\mathbf{z})$ is invertible \rev{on $\mathcal{U}$}. 
    Then \rev{for every $\mathbf{z}\in\mathcal{U}$,}
    \begin{equation}
        \mathbf{u}(\mathbf{z},t) \;=\; -\,\lambda\!\big(V(\mathbf{z}),t\big)\,\nabla \mathbf{S}(\mathbf{z})^{-1} \,\mathbf{S}(\mathbf{z})
        \label{eq:u_pseudo}
    \end{equation}
    satisfies~\eqref{eq:V_decay} with equality when $\lambda(V,t)=\sigma(V,t)/(2V)$, 
    where $\sigma(\cdot,\cdot)$ is chosen as presented in~\eqref{eq:sigma_def}.
    \end{lemma}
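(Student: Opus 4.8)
The plan is to verify the decay law directly by differentiating the quadratic OLF $V(\mathbf z)=\tfrac12\|\mathbf S(\mathbf z)\|^2$ along the plant $\dot{\mathbf z}=\mathbf u$. First I would record the gradient identity already used in~\eqref{eq:nonortho}, namely $\nabla V(\mathbf z)=\nabla\mathbf S(\mathbf z)^\top\mathbf S(\mathbf z)$, so that by~\eqref{eq:Vdot_general},
\begin{equation}
\dot V(\mathbf z,t)=\nabla V(\mathbf z)^\top\mathbf u(\mathbf z,t)=\mathbf S(\mathbf z)^\top\nabla\mathbf S(\mathbf z)\,\mathbf u(\mathbf z,t).
\end{equation}

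Next I would substitute the proposed feedback~\eqref{eq:u_pseudo}. Since $\nabla\mathbf S(\mathbf z)$ is invertible by hypothesis, the product $\nabla\mathbf S(\mathbf z)\,\nabla\mathbf S(\mathbf z)^{-1}$ collapses to the identity, leaving
\begin{equation}
\dot V=-\lambda(V,t)\,\mathbf S(\mathbf z)^\top\mathbf S(\mathbf z)=-2\lambda(V,t)\,V(\mathbf z).
\end{equation}
Choosing $\lambda(V,t)=\sigma(V,t)/(2V)$, which is well-defined whenever $V>0$ (i.e., away from the stationarity set $\mathcal S$), yields $\dot V=-\sigma(V,t)$, so~\eqref{eq:V_decay} holds with equality. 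The four convergence regimes then follow by instantiating $\sigma$ as in~\eqref{eq:sigma_def} and invoking Lemmas~\ref{lem:exp}--\ref{lem:prescribed}; for instance $\sigma=cV$ gives the constant gain $\lambda=c/2$ and recovers the classical continuous Newton flow with exponentially decaying stationarity residual, while the FT law $\sigma=kV^\gamma$ gives $\lambda=\tfrac{k}{2}V^{\gamma-1}$.

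The computation is immediate, so there is no substantial obstacle in the core argument; the only points requiring a word of care are (i) that $\lambda$ becomes singular at $V=0$, which is harmless because $V=0$ is exactly the target set on which the dynamics should terminate, and (ii) well-posedness of the closed loop near $\mathcal S$, since for the FT/FxT/PT laws the factors $V^{\gamma-1}$, $V^{\delta-1}$, or $(T-t)^{-1}$ render the right-hand side non-Lipschitz — this is the standard cost of faster-than-exponential convergence and is handled exactly as in the references underlying Lemmas~\ref{lem:finite}--\ref{lem:prescribed}. I would also remark in passing that when $\mathbf S(\mathbf z)=\nabla J(\mathbf x)$ one has $\nabla\mathbf S=\nabla^2 J$, so~\eqref{eq:u_pseudo} is a rate-shaped Newton direction, which justifies the name; invertibility of $\nabla\mathbf S$ is then the familiar nondegenerate-Hessian requirement and is strictly stronger than Assumption~\ref{ass:nonsing}, consistent with the stated ordering of the three designs by restrictiveness.
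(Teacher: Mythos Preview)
Your proposal is correct and follows essentially the same approach as the paper: compute $\dot V=\mathbf S^\top\nabla\mathbf S\,\mathbf u$, substitute~\eqref{eq:u_pseudo}, cancel $\nabla\mathbf S\,\nabla\mathbf S^{-1}$, and obtain $\dot V=-2\lambda V$. Your additional remarks on the singularity of $\lambda$ at $V=0$ and the non-Lipschitz behavior under the FT/FxT/PT laws are sound observations that the paper's own proof leaves implicit.
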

    The term \rev{Newton dynamics (ND) reflects that in the unconstrained case $\mathbf{S}(\mathbf{z}) = \nabla J(\mathbf{x})$} with a quadratic Lyapunov function~\eqref{eq:lyap}, the feedback law reduces to the continuous-time Newton method $\dot{\mathbf{x}} = -\nabla^2 J(\mathbf{x})^{-1}\nabla J(\mathbf{x})$ that is known to guarantee exponential convergence~\cite{brown_effective_1989}.

    \begin{proof}
    With $V=\tfrac12\|\mathbf{S}\|^2$ and $\nabla V=\nabla \mathbf{S}^\top \mathbf{S}$,
    \begin{equation}
    \dot V \;=\; -\,\lambda\,\mathbf{S}^\top \nabla \mathbf{S}\,\nabla \mathbf{S}^{-1} \mathbf{S}.
    \end{equation}
    Since $\nabla \mathbf{S}$ is invertible, $\dot V=-\lambda\,\mathbf{S}^\top\mathbf{S}=-2\lambda V$.
    \end{proof}

    \begin{exmp}[ND for Unconstrained Optimization]
    \label{ex:nd_unconstrained}
    Let $\mathbf{S}(\mathbf{x})=\nabla J(\mathbf{x})$ and $V(\mathbf{x})=\tfrac12\|\mathbf{S}(\mathbf{x})\|^2$. Assume $\nabla^2 J(\mathbf{x})\succ 0$ for all $\mathbf{x}$, a sufficient condition for strict convexity. Set
    \begin{equation}
    \mathbf{u}(\mathbf{x},t)\;=\;-\big[\nabla^2 J(\mathbf{x})\big]^{-1}\,\nabla J(\mathbf{x})\,\lambda\!\big(V(\mathbf{x}),t\big),
    \label{eq:nd_uc_u}
    \end{equation}
    and choose $\lambda(V,t)=\sigma(V,t)/(2V)$, valid for any decay law $\sigma$, as shown in~\eqref{eq:sigma_def}. 
    Then
    \begin{equation}
    \dot V = \nabla J^\top \nabla^2 J\,\mathbf{u} = -\,\|\nabla J\|^2\,\lambda(V,t) = -\,\sigma\!\big(V(\mathbf{x}),t\big),
    \label{eq:nd_uc_Vdot}
    \end{equation}
    so the timing law is enforced with equality.
    
    If $J(\mathbf{x})=\tfrac12\mathbf{x}^\top A\mathbf{x}-\mathbf{b}^\top\mathbf{x}$ with $A\succ 0$, then $\nabla J=A\mathbf{x}-\mathbf{b}$ and $\mathbf{u}=-A^{-1}\nabla J\,\lambda(V,t)$ satisfies \eqref{eq:nd_uc_Vdot} for any $\sigma$ in \eqref{eq:sigma_def}.
    \end{exmp}

    \begin{lemma}[Gradient Dynamics (GD)]
    \label{lem:svb}
    Suppose there exists $m>0$ such that for all $\mathbf{z}$\rev{$\,\in\mathcal{U}$},
    \begin{equation}
    \frac{\nabla \mathbf{S}(\mathbf{z})+\nabla \mathbf{S}(\mathbf{z})^\top}{2} \succeq m I .
    \label{eq:symm_part_cond}
    \end{equation}
    Then \rev{for every $\mathbf{z}\in\mathcal{U}$ the} feedback
    \begin{equation}
    \mathbf{u}(\mathbf{z},t) \;=\; -\,\gamma\!\big(V(\mathbf{z}),t\big)\,\mathbf{S}(\mathbf{z}),
    \label{eq:u_gamma}
    \end{equation}
    with $\gamma(V,t)=\sigma(V,t)/(2mV)$ satisfies
    \begin{equation}
    \dot V \;\le\; -\,\sigma(V,t),
    \label{eq:gd_vdot_bound}
    \end{equation}
    where $\sigma(\cdot,\cdot)$ is chosen as in~\eqref{eq:sigma_def}.
    \end{lemma}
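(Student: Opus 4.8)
The plan is to mirror the computations used for the Hessian-gradient and Newton dynamics (Lemmas~\ref{lem:w_pseudo} and~\ref{lem:pseudo}), except that here the feedback points along $-\mathbf{S}(\mathbf{z})$ rather than along a preconditioned direction, so I expect to land on an inequality rather than an equality. First I would fix the quadratic OLF $V(\mathbf{z})=\tfrac12\|\mathbf{S}(\mathbf{z})\|^2$ from~\eqref{eq:V_quad}, whose gradient is $\nabla V(\mathbf{z})=\nabla\mathbf{S}(\mathbf{z})^\top\mathbf{S}(\mathbf{z})$, and substitute the feedback~\eqref{eq:u_gamma} into the chain rule~\eqref{eq:Vdot_general}. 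This yields $\dot V=\nabla V^\top\mathbf{u}=-\gamma(V,t)\,\mathbf{S}(\mathbf{z})^\top\nabla\mathbf{S}(\mathbf{z})\,\mathbf{S}(\mathbf{z})$.

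The key algebraic step is to replace the (possibly nonsymmetric) Jacobian $\nabla\mathbf{S}(\mathbf{z})$ inside this quadratic form by its symmetric part: since $\mathbf{v}^\top M\mathbf{v}=\mathbf{v}^\top M^\top\mathbf{v}$ for any matrix $M$ and vector $\mathbf{v}$, we have $\mathbf{S}^\top\nabla\mathbf{S}\,\mathbf{S}=\mathbf{S}^\top\bigl(\tfrac{\nabla\mathbf{S}+\nabla\mathbf{S}^\top}{2}\bigr)\mathbf{S}$, so the skew-symmetric part contributes nothing. Invoking hypothesis~\eqref{eq:symm_part_cond}, the symmetric part dominates $mI$, hence $\mathbf{S}^\top\nabla\mathbf{S}\,\mathbf{S}\ge m\|\mathbf{S}\|^2=2mV$.

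Next I would verify that $\gamma(V,t)=\sigma(V,t)/(2mV)$ is strictly positive along the trajectory before the stationarity set $\mathcal{S}$ is reached: $m>0$ by assumption, $\sigma(V,t)>0$ for $V>0$ by the forms in~\eqref{eq:sigma_def}, and $V>0$ whenever $\mathbf{z}\notin\mathcal{S}$; at points with $V=0$ the claimed bound is trivial. Multiplying the lower bound $\mathbf{S}^\top\nabla\mathbf{S}\,\mathbf{S}\ge 2mV$ by the nonnegative scalar $\gamma(V,t)$ and using its definition gives $\dot V\le-\gamma(V,t)\,2mV=-\sigma(V,t)$, which is exactly~\eqref{eq:gd_vdot_bound}. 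Finally, plugging the four choices of $\sigma$ from~\eqref{eq:sigma_def} into this estimate and applying Lemmas~\ref{lem:exp}--\ref{lem:prescribed} produces the exponential, FT, FxT, and PT conclusions, respectively.

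I expect the only genuine subtlety to be the well-posedness and positivity bookkeeping for $\gamma$ near $V=0$: the closed-loop vector field may be singular on $\mathcal{S}$ (as is typical for FT/FxT flows), so the convergence statements should be read as holding up to the first time $\mathcal{S}$ is reached, after which the trajectory stays there. The quadratic-form manipulation and the invocation of the rate lemmas are routine. It is also worth remarking why equality is lost relative to the Hessian-gradient and Newton designs: hypothesis~\eqref{eq:symm_part_cond} only lower-bounds the symmetric part of $\nabla\mathbf{S}$, so in general $\mathbf{S}^\top\nabla\mathbf{S}\,\mathbf{S}$ strictly exceeds $2mV$, and the decay law is enforced as an inequality---which is still exactly what Lemmas~\ref{lem:exp}--\ref{lem:prescribed} require.
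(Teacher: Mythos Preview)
Your proposal is correct and follows essentially the same approach as the paper: compute $\dot V=-\gamma\,\mathbf{S}^\top\nabla\mathbf{S}\,\mathbf{S}$, reduce the quadratic form to the symmetric part of $\nabla\mathbf{S}$ (the paper does this via the explicit decomposition $\nabla\mathbf{S}=H+K$ with $K$ skew-symmetric, you via the identity $\mathbf{v}^\top M\mathbf{v}=\mathbf{v}^\top M^\top\mathbf{v}$), apply~\eqref{eq:symm_part_cond} to get $\mathbf{S}^\top\nabla\mathbf{S}\,\mathbf{S}\ge m\|\mathbf{S}\|^2=2mV$, and substitute $\gamma=\sigma/(2mV)$. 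Your additional remarks on positivity of $\gamma$ and the loss of equality are sound but go slightly beyond what the paper records.
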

    The term \rev{gradient dynamics (GD)} is chosen in analogy with the unconstrained case: when \rev{$\mathbf{S}(\mathbf{z}) = \nabla J(\mathbf{x})$} with a quadratic Lyapunov function~\eqref{eq:lyap}, the feedback reduces to the classical gradient descent dynamics $\dot{\mathbf{x}} = -\nabla J(\mathbf{x})$ that is known to guarantee exponential convergence~\cite{brown_effective_1989}.
    \rev{The proof and an example of GD for unconstrained optimization are deferred to \iffullversion Appendix~\ref{app:gd_proof}\else the supplementary material\fi.}

    \rev{The three realizations are nested in restrictiveness: Lemma~\ref{lem:w_pseudo} (HGD) requires only Assumption~\ref{ass:OLF_reg} and applies to any OLF; Lemma~\ref{lem:pseudo} (ND) requires invertibility of $\nabla\mathbf{S}$ (which implies Assumption~\ref{ass:OLF_reg}) and uses the quadratic OLF~\eqref{eq:V_quad}; Lemma~\ref{lem:svb} (GD) is strongest, requiring $\operatorname{sym}(\nabla\mathbf{S})\succeq mI$. Per-evaluation costs scale as $\mathcal{O}(n^2)$ for HGD, $\mathcal{O}(n^3)$ for ND, and $\mathcal{O}(n)$ for GD. The convergence law $\sigma(\cdot,\cdot)$ is left open as a design choice.}
    \iffullversion
    \begin{remark}[Relation to Sontag’s Universal Construction]
    Sontag’s classical work~\cite{sontag_universal_1989} considered nonlinear systems affine in the control and showed that, whenever a control-Lyapunov function exists, one can explicitly construct a feedback law that ensures asymptotic stabilization. The key idea is that the feedback recipe is universal: it applies to any system once a Lyapunov function is available. Our framework plays an analogous role in optimization: given an optimization Lyapunov function, the feedback designs in Lemmas~\ref{lem:w_pseudo},~\ref{lem:pseudo}, and~\ref{lem:svb} provide universal constructions that not only ensure convergence but also shape its timing with exponential, FT, FxT, or PT guarantees.
    \end{remark}
    \else
    \rev{
    \begin{remark}[Relation to Sontag's Universal Construction]
    Sontag's universal construction~\cite{sontag_universal_1989} shows that any control-Lyapunov function yields an explicit stabilizing feedback. The feedback designs in Lemmas~\ref{lem:w_pseudo},~\ref{lem:pseudo}, and~\ref{lem:svb} play an analogous role: given any OLF, they provide explicit optimizer dynamics with selectable convergence guarantees.
    \end{remark}}
    \fi

\subsection{\rev{Forward Invariance of Sublevel Sets}}
\label{subsec:regularity}

\rev{The convergence theorems that follow require the closed-loop trajectory to remain in the set $\mathcal{U}$ on which Assumption~\ref{ass:OLF_reg} holds. This subsection removes that requirement: under a growth condition relating $\|\nabla V\|$ to $V$, sublevel sets of $V$ below a threshold are forward invariant, so initializing within one keeps the trajectory in $\mathcal{U}$ for all time. The same condition bounds the length of the gradient-flow paths of $V$; the HGD feedback~\eqref{eq:u_w} rescales $-\nabla V$ by a positive scalar, so its trajectories traverse these same paths and inherit the bound independently of the timing law, as established in \iffullversion Appendix~\ref{app:fwd_inv_proof}\else the supplementary material\fi. We state the condition as an explicit assumption; when it cannot be verified, a regularized variant of the HGD feedback retains convergence.}

\rev{\begin{assumption}[\L{}ojasiewicz Inequality]
\label{ass:lojasiewicz}
There exists a neighborhood $\mathcal{N}\subseteq\mathcal{U}$ of $\mathcal{S}\cap\mathcal{U}$ and constants $c_{\mathrm{L}}>0$ and $\alpha\in[0,1)$ such that
\begin{equation}
\|\nabla V(\mathbf{z})\| \;\ge\; c_{\mathrm{L}}\,V(\mathbf{z})^{\alpha},
\qquad \forall\,\mathbf{z}\in\mathcal{N},
\label{eq:lojasiewicz}
\end{equation}
where $\mathcal{U}$ is the open set of Assumption~\ref{ass:OLF_reg}. The case $\alpha=1/2$ is the Polyak--\L{}ojasiewicz (PL) condition.
\end{assumption}}

\rev{In the structured problems treated below, Assumption~\ref{ass:lojasiewicz} need not be posited at the outset. PL via a nonsingular Jacobian makes this precise: on a compact sublevel set $\Omega_c\subseteq\mathcal{U}$ on which $\nabla\mathbf{S}$ is nonsingular, $\|\nabla V(\mathbf{z})\|^2\ge 2\sigma_c^{\,2}V(\mathbf{z})$ with $\sigma_c\coloneqq\min_{\mathbf{z}\in\Omega_c}\sigma_{\min}(\nabla\mathbf{S}(\mathbf{z}))>0$, so Assumption~\ref{ass:lojasiewicz} holds on $\Omega_c$ with $\alpha=1/2$ and $c_{\mathrm{L}}=\sigma_c\sqrt{2}$; this is \iffullversion stated and proved as Lemma~\ref{lem:lin_PL} in Appendix~\ref{app:lin_PL}\else stated and proved in the supplementary material\fi.}

\rev{\begin{lemma}[Forward Invariance of Sublevel Sets]
\label{lem:fwd_inv}
Let $V$ be an OLF satisfying Assumptions~\ref{ass:OLF_reg} and~\ref{ass:lojasiewicz}, and let $\mathbf{u}(\mathbf{z},t)$ be any of the feedbacks of Lemmas~\ref{lem:w_pseudo}, \ref{lem:pseudo}, or \ref{lem:svb}, so that $\dot V\le 0$ on $\mathcal{U}\setminus\mathcal{S}$. Then there exists $\rho^\ast>0$ such that for every $c\in(0,\rho^\ast]$ the sublevel set
\(
\Omega_c:=\{\mathbf{z}\in\mathcal{U}:V(\mathbf{z})\le c\}
\)
is closed in $\mathcal{U}$ and forward invariant under the closed-loop dynamics; in particular, $\mathbf{z}(0)\in\Omega_c$ implies $\mathbf{z}(t)\in\Omega_c\subseteq\mathcal{U}$ for all $t\ge 0$. If additionally $\mathcal{S}\cap\mathcal{U}$ is bounded, then $\Omega_c$ is bounded.
\end{lemma}}

\begin{proof}[\rev{Proof sketch}]
\rev{Continuity of $V$ gives closedness, and an auxiliary gradient flow $\dot{\boldsymbol{\xi}}=-\nabla V(\boldsymbol{\xi})$ with Assumption~\ref{ass:lojasiewicz} bounds $\mathrm{dist}(\mathbf{z},\mathcal{S}\cap\mathcal{U})$ uniformly on $\Omega_c$, placing $\Omega_c$ in a tube of finite radius $r_c\coloneqq c^{1-\alpha}/[c_{\mathrm{L}}(1-\alpha)]$ (bounded when $\mathcal{S}\cap\mathcal{U}$ is). Choosing $\rho^\ast>0$ with $\Omega_{\rho^\ast}\subseteq\mathcal{N}$ and tube in $\mathcal{U}$, the inequality $\dot V\le 0$ on $\Omega_c\setminus\mathcal{S}$ with~\cite[Thm.~4.1]{khalil_nonlinear_2002} gives forward invariance for every $c\in(0,\rho^\ast]$. The complete argument is given in \iffullversion Appendix~\ref{app:fwd_inv_proof}\else the supplementary material\fi.}
\end{proof}

\rev{Lemma~\ref{lem:fwd_inv} thus shows that initialization in a compact sublevel set suffices. In certain structured problems discussed below $\nabla\mathbf{S}$ is nonsingular and Assumption~\ref{ass:OLF_reg} holds globally on $\mathbb{R}^N$, so $\mathcal{U}=\mathbb{R}^N$ and $\rho^\ast$ imposes no restriction; what remains is compactness of $\Omega_c$, which is secured by a separate threshold $c^\ast$ introduced in the ensuing.}

\rev{\begin{remark}[Regularized HGD]
\label{rem:regularizers}
When Assumption~\ref{ass:lojasiewicz} cannot be verified, the HGD feedback~\eqref{eq:u_w} can be replaced by the Tikhonov-regularized variant
\begin{equation}
\mathbf{u}_\eta(\mathbf{z},t)
\;=\;-\,\frac{\sigma\!\big(V(\mathbf{z}),t\big)}{\|\nabla V(\mathbf{z})\|^2+\eta}\,\nabla V(\mathbf{z}),
\qquad \eta>0,
\label{eq:HGD_reg}
\end{equation}
which yields uniformly bounded feedback, and hence forward-complete trajectories (no finite-time escape), without invoking Assumption~\ref{ass:lojasiewicz}; its equilibrium set equals $\mathcal{S}$ exactly, at the cost of a rate slowdown near $\mathcal{S}$ (so the FT/FxT/PT settling time is no longer met, though asymptotic convergence is preserved). The bound on $\|\mathbf{u}_\eta\|$, the rate-slowdown formula, and the analogous PT time regularization~\cite{song_prescribed-time_2023} are provided in \iffullversion Appendix~\ref{app:regularized_hgd}\else the supplementary material\fi.
\end{remark}}

\subsection{Numerical Example: Unconstrained Problem}
\label{subsec:logsumexp_example}

We illustrate the three feedback laws on \rev{an} $n$-dimensional, smooth, strongly convex model.
We consider the classical log-sum-exp plus quadratic term function~\cite{castera_continuous_2024}, for any $\mathbf{x}\in\mathbb{R}^n$
\begin{equation}
    J(\mathbf{x})\;=\;\log\!\Bigg(\sum_{i=1}^n \big(e^{x_i}+e^{-x_i}\big)\Bigg)\;+\;\frac{1}{2}\,\|\mathbf{x}\|^2,
    \label{eq:logsumexp_cost_nd}
\end{equation}
which has the unique minimizer $\mathbf{x}^\star=\mathbf{0}$.
We compare the Hessian-gradient dynamics (HGD), the Newton dynamics (ND), and the gradient dynamics (GD). 
Each dynamics enforces the same Lyapunov decay template $\dot V=-\sigma(V,t)$, with $V(\mathbf{x})=\tfrac{1}{2}\|\nabla J(\mathbf{x})\|^2$. 
We test four convergence laws: exponential, FT, FxT, and PT, resulting in a total of twelve cases.

We run these simulations on a consumer-grade computer equipped with a single 6-core Intel i7 CPU running at 2.6 GHz.
All simulations are carried out in MATLAB using \texttt{ode15s} with 
\texttt{RelTol}$=10^{-9}$ and \texttt{AbsTol}$=10^{-12}$, 
terminating when $\|\nabla J(\mathbf{x}(t))\|\le 10^{-6}$. 
For all simulations, we set $n=50$ and initialize at $\mathbf{x}_0=[1,\dots,1]^\top \in\mathbb{R}^n$. 

\rev{Figure~\ref{fig:logsumexp} confirms that all 12 cases realize the decay patterns induced by $\sigma(V,t)$. Table~\ref{tab:cpu_times} reports the corresponding wall-clock CPU costs (the time axis in Fig.~\ref{fig:logsumexp} corresponds to the continuous dynamics of~\eqref{eq:plant}). The runtime profiles differ depending on the chosen law: GD is fastest under the exponential law, while HGD attains the lowest run times under FT and FxT. Under PT, both ND and HGD outperform GD.}
\begin{figure}
  \begin{centering}
    \includegraphics[width=0.9\linewidth]{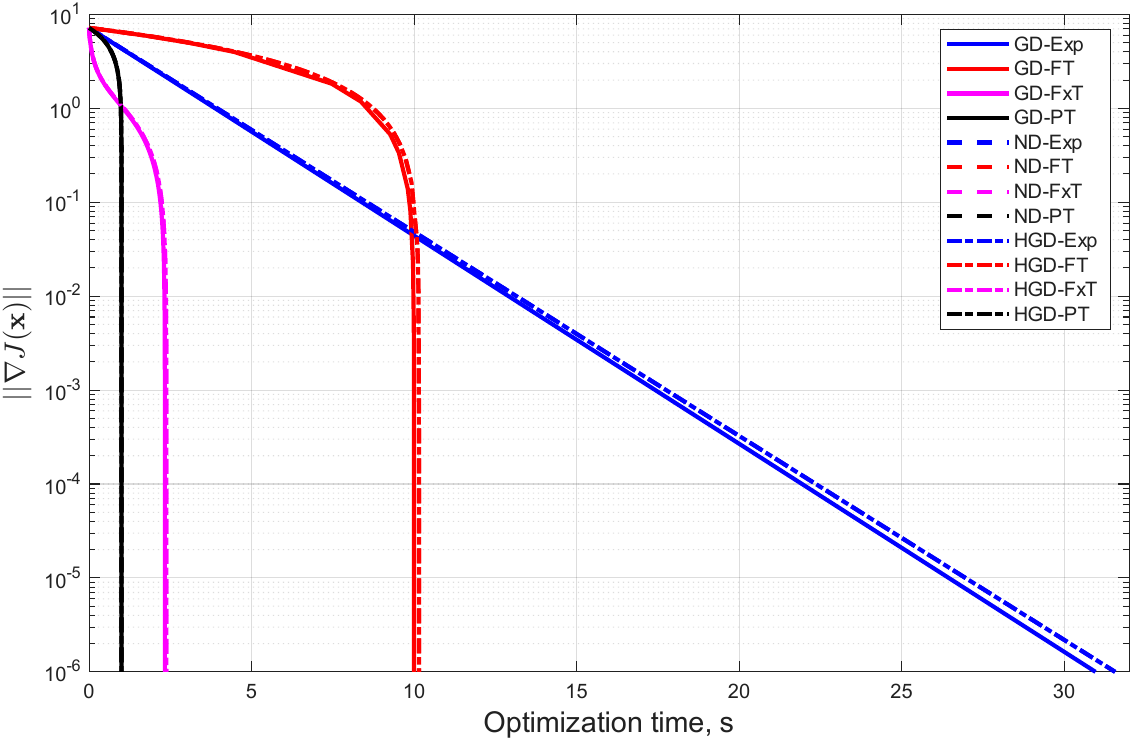}
    \par\end{centering}
    \caption{Gradient norm trajectories $\|\nabla J(\mathbf{x})\|$ for $n=50$ 
    under the exponential (Exp), finite-time (FT), fixed-time (FxT), 
    and prescribed-time (PT) laws, denoted by blue, red, magenta, and black lines, respectively. 
    Three realizations are compared: GD, ND, and HGD, denoted by solid, dashed, and dashed-dotted lines, respectively. 
    The test function is the log-sum-exp plus quadratic model \eqref{eq:logsumexp_cost_nd}, 
    initialized at $\mathbf{x}_0 = [1,\ldots, 1]^\top$.}
  \label{fig:logsumexp}
\end{figure}
\begin{table}
  \centering
  \setlength{\tabcolsep}{5pt}
  \caption{Wall-clock CPU times (ms) for the $n=50$ log-sum-exp example 
under the four convergence laws and three feedback realizations.}
  \label{tab:cpu_times}
  \begin{tabular}{lcccc}
    \hline
    \textbf{Case} & \textbf{Exp} & \textbf{FT} & \textbf{FxT} & \textbf{PT} \\
    \hline
    ND  & 132 & 58 & 79 & 92 \\
    GD  &  73 & 48 & 77 & 134  \\
    HGD &  86 & 19 & 51 & 81   \\
    \hline
  \end{tabular}
\end{table}

All three feedback laws achieve convergence with the selected timing law, but differ in their computational profiles. 
The HGD design avoids matrix inversion at the cost of additional multiplications; the ND applies under strict convexity, and the GD applies whenever strong convexity holds. 
These examples demonstrate how the OLF framework accommodates different structural assumptions while unifying exponential, FT, FxT, and PT guarantees.

\rev{\subsection{On Discrete-Time Implementation}}
\label{subsec:discrete_impl}

\rev{Our simulations use MATLAB's \texttt{ode15s}, which integrates the continuous-time dynamics without exploiting their Lyapunov structure. More principled discrete-time realizations instead preserve the convergence guarantees by following Lyapunov inequalities rather than a fixed numerical scheme~\cite{garg_fixed-time_2023,hernandez-solano_numerical_2023,guedes_preservation_2025,sanchez_lyapunov-based_2021,ushiyama_unified_2023}; a survey is provided in \iffullversion Appendix~\ref{app:discrete_notes}\else the supplementary material\fi.}

\section{Constrained Optimization}
\label{sec:constrained}
We now extend the control-centric framework to constrained optimization problems that include equality and inequality constraints, covering exponential, FT, FxT, and PT convergence laws.
\rev{We use HGD throughout this and the following sections, since it requires only the OLF regularity of Assumption~\ref{ass:OLF_reg} and applies to any OLF. ND remains available wherever $\nabla\mathbf{S}$ is invertible, but GD does not apply as the symmetric part of $\nabla\mathbf{S}$ is indefinite.}

\subsection{Problem Setup and Preliminaries}
Consider the constrained problem
\begin{subequations}
\begin{align}
\min_{\mathbf{x}\in\mathbb{R}^n}~&J(\mathbf{x}) \\ 
\text{s.t.}~& \mathbf{g}(\mathbf{x})\le \mathbf{0}, \label{eq:ineq-constraints}\\
&\mathbf{h}(\mathbf{x})=\mathbf{0}, \label{eq:eq-constraints}
\end{align}
\label{eq:constrained_problem_general}
\end{subequations}
with the associated Lagrangian
\begin{equation}
L(\mathbf{x},\boldsymbol{\lambda},\boldsymbol{\mu})
=
J(\mathbf{x})
+\boldsymbol{\lambda}^\top\mathbf{g}(\mathbf{x})
+\boldsymbol{\mu}^\top\mathbf{h}(\mathbf{x}),
\label{eq:Lagrangian_general}
\end{equation}
where $J:\mathbb{R}^n\to\mathbb{R}$, $\mathbf{g}:\mathbb{R}^n\to\mathbb{R}^p$, $\mathbf{h}:\mathbb{R}^n\to\mathbb{R}^q$, $\boldsymbol{\lambda}\in\mathbb{R}^p_{\ge 0}$, and $\boldsymbol{\mu}\in\mathbb{R}^q$.

\begin{definition}[KKT Conditions]
\label{def:KKT_general}
A point $(\mathbf{x}^\star,\boldsymbol{\lambda}^\star,\boldsymbol{\mu}^\star)$ satisfies the KKT conditions if
\begin{subequations}
\begin{align}
&\nabla J(\mathbf{x}^\star)+\nabla\mathbf{g}(\mathbf{x}^\star)^\top\boldsymbol{\lambda}^\star+\nabla\mathbf{h}(\mathbf{x}^\star)^\top\boldsymbol{\mu}^\star=\mathbf{0}, \label{eq:KKT_stationarity_general}\\
&\mathbf{g}(\mathbf{x}^\star)\le \mathbf{0},~~\boldsymbol{\lambda}^\star\ge \mathbf{0},~~\langle\boldsymbol{\lambda}^\star,\mathbf{g}(\mathbf{x}^\star)\rangle=0, \label{eq:KKT_ineq_general}\\
&\mathbf{h}(\mathbf{x}^\star)=\mathbf{0}. \label{eq:KKT_eq_general}
\end{align}
\end{subequations}
\end{definition}

\subsection{Fischer-Burmeister Function and Its Usage for Stationarity}

We encode KKT conditions associated with the inequality constraints in~\eqref{eq:KKT_ineq_general} via the Fischer-Burmeister (FB) function~\cite{fischer_special_1992}:
\begin{equation}
\phi(a,b):=\sqrt{a^2+b^2}-(a+b).
\label{eq:FB_def_gen}
\end{equation}
Note that $\phi(a,b)=0$ if and only if $a\ge 0$, $b\ge 0$, and $ab=0$~\cite{fischer_special_1992}. \rev{To encode the KKT conditions in~\eqref{eq:KKT_ineq_general}, which require $\boldsymbol{\lambda}\ge \mathbf{0}$, $\mathbf{g}(\mathbf{x})\le \mathbf{0}$, and $\boldsymbol{\lambda}^\top \mathbf{g}(\mathbf{x})=0$, we apply $\phi$ to the pair $(\boldsymbol{\lambda},-\mathbf{g}(\mathbf{x}))$ elementwise. Then $\boldsymbol{\phi}\!\big(\boldsymbol{\lambda},-\mathbf{g}(\mathbf{x})\big)=\mathbf{0}$ if and only if $\boldsymbol{\lambda}\ge \mathbf{0}$, $\mathbf{g}(\mathbf{x})\le \mathbf{0}$, and $\boldsymbol{\lambda}^\top \mathbf{g}(\mathbf{x})=0$.} 

The FB function is nonsmooth near the origin, so we use its smoothed version~\cite{liao-mcpherson_regularized_2019}
\begin{equation}
\phi_{\varepsilon}(a,b):=\sqrt{a^2+b^2+\varepsilon^2}-(a+b),
\quad \varepsilon>0,
\label{eq:FB_smooth_gen}
\end{equation}
where the smoothing satisfies
\begin{equation}
\rev{\left|\phi_{\varepsilon}(a,b) - \phi(a,b)\right|\le \varepsilon,}
\end{equation}
for any $\varepsilon > 0$.
The smoothed FB mapping \rev{enforces the KKT conditions associated with the inequality constraints} up to $\varepsilon > 0$, yielding an $\varepsilon$-KKT solution. One may select $\varepsilon$ arbitrarily small to approach \rev{the exact KKT conditions} while preserving differentiability and good numerical properties~\cite{liao-mcpherson_regularized_2019}. 
If exact KKT is required, one may instead use the stationarity vector presented in Remark~\ref{rem:alt_stat} below.
\rev{A complementary projection-free approach that instead renders the feasible set forward invariant via control barrier functions, yielding anytime feasibility under asymptotic convergence, is developed in~\cite{allibhoy_control-barrier-function-based_2024}.}

\rev{To ensure that the modified Lagrangian Hessian $\mathbf{H}_{\mathcal{L}} = \nabla^2 J + \sum_i \widetilde{\lambda}_i \nabla^2 g_i$ remains positive definite \rev{on $\mathcal{U}$}, and not only at the equilibrium where $\boldsymbol{\lambda}\ge\mathbf{0}$, we replace $\boldsymbol{\lambda}$ in the Lagrangian gradient block of the stationarity vector with
\begin{equation}
\widetilde{\lambda}_i \;\coloneqq\; \tfrac{1}{2}\!\Big(\!\sqrt{\lambda_i^2+\varepsilon^2}+\lambda_i\Big),
\qquad i=1,\dots,p,
\label{eq:lambda_tilde}
\end{equation}
a smooth approximation of the positive-part operator $\max(\lambda_i,0) = \tfrac{1}{2}(|\lambda_i|+\lambda_i)$, obtained by smoothing the absolute value as $|\lambda_i|\approx\sqrt{\lambda_i^2+\varepsilon^2}$. By construction $\widetilde{\lambda}_i>0$ for all $\lambda_i\in\mathbb{R}$, and for $\lambda_i\ge 0$ the map $\lambda_i\mapsto\widetilde{\lambda}_i-\lambda_i=\tfrac{1}{2}(\sqrt{\lambda_i^2+\varepsilon^2}-\lambda_i)$ is decreasing, so $0<\widetilde{\lambda}_i-\lambda_i\le\varepsilon/2$. The inequality block retains the original~$\boldsymbol{\lambda}$. Since the FB smoothing and the $\widetilde{\lambda}$ modification share the same parameter $\varepsilon$ and enter different blocks of $\mathbf{S}$, the displacement of the modified equilibrium from the true KKT point is $O(\varepsilon)$.}

Define the stacked variable $\mathbf{z}:=\mathrm{col}(\mathbf{x},\boldsymbol{\lambda},\boldsymbol{\mu})\in\mathbb{R}^{n+p+q}$ and the stationarity vector
\begin{equation}
\mathbf{S}(\mathbf{z})
=
\begin{bmatrix}
\rev{\nabla J(\mathbf{x})+\nabla\mathbf{g}(\mathbf{x})^\top\widetilde{\boldsymbol{\lambda}}+\nabla\mathbf{h}(\mathbf{x})^\top\boldsymbol{\mu}}\\[2pt]
\rev{\boldsymbol{\phi}_{\varepsilon}\!\big(\boldsymbol{\lambda},-\mathbf{g}(\mathbf{x})\big)}\\[2pt]
\mathbf{h}(\mathbf{x})
\end{bmatrix}.
\label{eq:S_FB}
\end{equation}
\rev{Here $\widetilde{\boldsymbol{\lambda}}=\operatorname{col}(\widetilde{\lambda}_1,\dots,\widetilde{\lambda}_p)$ with each $\widetilde{\lambda}_i$ defined in~\eqref{eq:lambda_tilde}; the inequality block retains the original $\boldsymbol{\lambda}$.}
Then $\mathbf{S}(\mathbf{z})=\mathbf{0}$ enforces 
\eqref{eq:KKT_stationarity_general}-\eqref{eq:KKT_eq_general} up to the 
smoothing tolerance $\varepsilon$.

\rev{The zero of $\mathbf{S}$ is the smoothed stationarity point $\mathbf{z}^\star_\varepsilon$, which is displaced from the exact KKT point $\mathbf{z}^\star=(\mathbf{x}^\star,\boldsymbol{\lambda}^\star,\boldsymbol{\mu}^\star)$ by
\begin{equation}
\bigl\|\mathbf{z}^\star_\varepsilon-\mathbf{z}^\star\bigr\| = O(\varepsilon),
\label{eq:eps_displacement}
\end{equation}
as established in \iffullversion Lemma~\ref{lem:eps_displacement}\else the supplementary material\fi.}

\begin{remark}[Alternative Stationarity Without Smoothing]
\label{rem:alt_stat}
If exact feasibility is required, one may use the alternative stationarity vector
\rev{\begin{equation}
\mathbf{S}(\mathbf{z})=
\operatorname{col}\left(
\nabla_{\mathbf{x}} L(\mathbf{x},\boldsymbol{\lambda},\boldsymbol{\mu}),
\boldsymbol{\lambda}^\top \mathbf{g}(\mathbf{x}),
\mathbf{g}_+(\mathbf{x}),
\boldsymbol{\lambda}_-,
\mathbf{h}(\mathbf{x})
\right)
\label{eq:S_exact}
\end{equation}}
with $\mathbf{g}_+(\mathbf{x})=\max(\mathbf{g}(\mathbf{x}),\mathbf{0})$ and $\boldsymbol{\lambda}_-=\max(-\boldsymbol{\lambda},\mathbf{0})$ applied componentwise.
In this case, $\mathbf{S}(\mathbf{z})$ involves max operators and is therefore nonsmooth. However, the quadratic OLF $V(\mathbf{z}) = \tfrac{1}{2}\|\mathbf{S}(\mathbf{z})\|^2$ remains continuously differentiable, since the max terms enter squared and $(\max(0,\cdot))^2$ is $\mathcal{C}^1$. 
\end{remark}

We also include the following classical regularity assumptions for constrained optimization~\cite{beck_introduction_2014}.
\begin{assumption}[LICQ]
\label{ass:LICQ_general}
Let $\mathcal{A}(\mathbf{x}) := \{\, i \in \{1,\ldots,p\}\mid g_i(\mathbf{x}) = 0 \,\}$ denote the active set. We say the linear independence constraint qualification (LICQ) holds at $\mathbf{x}$ if the set
\begin{equation}
\Big\{\,\nabla h_j(\mathbf{x})~:~ j=1,\ldots,q \,\Big\}\cup \Big\{\, \nabla g_i(\mathbf{x})~:~ i\in\mathcal{A}(\mathbf{x}) \,\Big\}
\end{equation}
is linearly independent.
\end{assumption}

\begin{assumption}[Generalized Slater]
\label{ass:Slater_general}
For the inequality constraints $\mathbf{g}(\mathbf{x})\le \mathbf{0}$, generalized Slater's condition holds if there exists $\bar{\mathbf{x}}$ such that $\mathbf{h}(\bar{\mathbf{x}})=\mathbf{0}$ and $\mathbf{g}(\bar{\mathbf{x}})< \mathbf{0}$.
\end{assumption}
\subsection{Results}
We implement the Hessian-gradient dynamics (HGD):
\begin{equation}
\dot{\mathbf{z}} =
\mathbf{u}(\mathbf{z},t) = 
-\frac{ \nabla \mathbf{S}(\mathbf{z})^\top \mathbf{S}(\mathbf{z}) }{\left\|\nabla \mathbf{S}(\mathbf{z})^\top \mathbf{S}(\mathbf{z})\right\|^2}\;
\sigma\!\big(V(\mathbf{z}),t\big),
\label{eq:HGD_general}
\end{equation}
where $\sigma(\cdot,\cdot)$ is the scalar law selector. 
This feedback yields
\begin{equation}
\dot V = -\,\sigma\!\big(V(\mathbf{z}),t\big),
\label{eq:Vdot_scalar_template}
\end{equation}
on the domain where $\nabla \mathbf{S}(\mathbf{z})^\top \mathbf{S}(\mathbf{z})\neq \mathbf{0}$.

\rev{Under Assumption~\ref{ass:OLF_reg}, $\nabla\mathbf{S}(\mathbf{z})^\top\mathbf{S}(\mathbf{z})\neq\mathbf{0}$ for every $\mathbf{z}\in\mathcal{U}$ with $\mathbf{S}(\mathbf{z})\neq\mathbf{0}$, so~\eqref{eq:HGD_general} is well-defined on $\mathcal{U}\setminus\mathcal{S}$. The scalar coefficient in~\eqref{eq:HGD_general} may grow unbounded as $\mathbf{z}\to\mathcal{S}$; this non-Lipschitz behavior at the equilibrium set is necessary for convergence in finite time~\cite{bhat_finite-time_2000}, and the regularized variant in Remark~\ref{rem:regularizers} provides a bounded feedback when required.}

We now present the theorem for the constrained case, which covers both equality and inequality constraints using the HGD law. 
It recovers the desired convergence by using~\eqref{eq:sigma_def}. 

\begin{theorem}[HGD for Constrained Optimization]
\label{thm:unified_constrained}
Consider~\eqref{eq:constrained_problem_general} with $J,\mathbf{g},\mathbf{h}\in\mathcal{C}^2$, and define $\mathbf{S}$ via~\eqref{eq:S_FB}. 
\rev{Fix $\varepsilon>0$ and let $(\mathbf{x}^\star,\boldsymbol{\lambda}^\star,\boldsymbol{\mu}^\star)$ be a KKT point at which Assumption~\ref{ass:LICQ_general} holds. Suppose Assumptions~\ref{ass:OLF_reg} and~\ref{ass:lojasiewicz} hold on an open neighborhood $\mathcal{U}$ of $(\mathbf{x}^\star,\boldsymbol{\lambda}^\star,\boldsymbol{\mu}^\star)$ with $\mathcal{S}\cap\mathcal{U}=\{\mathbf{z}^\star_\varepsilon\}$, and let $\Omega_c\subseteq\mathcal{U}$ be a forward-invariant sublevel set as constructed in Lemma~\ref{lem:fwd_inv}. Then for every $\mathbf{z}(0)\in\Omega_c$, the trajectory $\mathbf{z}(t)$ of~\eqref{eq:HGD_general} satisfies $\mathbf{z}(t)\in\Omega_c$ for all $t\ge 0$, and} 
the KKT conditions are enforced up to the smoothing tolerance $\varepsilon$ \rev{with the convergence law $\sigma(\cdot,\cdot)$ selected in~\eqref{eq:sigma_def}}.
\end{theorem}
\begin{proof}[Proof sketch]
\rev{Assumption~\ref{ass:LICQ_general} ensures the multipliers $(\boldsymbol{\lambda}^\star,\boldsymbol{\mu}^\star)$ are unique~\cite[Thm.~11.12]{beck_introduction_2014}; the hypothesis $\mathcal{S}\cap\mathcal{U}=\{\mathbf{z}^\star_\varepsilon\}$ makes $\mathcal{S}\cap\mathcal{U}$ a bounded singleton, so Lemma~\ref{lem:fwd_inv} yields a forward-invariant $\Omega_c\subseteq\mathcal{U}$ on which~\eqref{eq:HGD_general} is well-defined, and the chain rule gives $\dot V=-\sigma(V,t)$; since $V=\tfrac12\|\mathbf{S}\|^2$, this drives $\mathbf{S}(\mathbf{z}(t))\to\mathbf{0}$ with the indicated timing, enforcing the KKT conditions up to the smoothing tolerance $\varepsilon$. The full argument, including well-definedness of~\eqref{eq:HGD_general} along the trajectory and the chain-rule computation, is given in \iffullversion Appendix~\ref{app:thm_constrained_proof}\else the supplementary material\fi.}
\end{proof}

\rev{Theorem~\ref{thm:unified_constrained} delivers the full timing guarantee for any OLF meeting Assumptions~\ref{ass:OLF_reg} and~\ref{ass:lojasiewicz} on a forward-invariant $\Omega_c$. Under the stronger structural hypotheses of strong convexity, affine equalities, and convex inequalities, these conditions hold automatically and the solution is unique, with a quantitative basin obtained from compactness of the sublevel sets of $V$.}

\rev{\begin{proposition}[Sublevel-Set Compactness for Constrained Problems]
\label{prop:c_star}
Consider~\eqref{eq:constrained_problem_general} with $\mathbf{S}$ defined by~\eqref{eq:S_FB}, and assume $J$ is strongly convex, $\mathbf{h}(\mathbf{x})=\mathbf{A}\mathbf{x}-\mathbf{b}$ with $\mathbf{A}$ full row rank, each $g_j$ is convex, Slater's condition holds (there exists $\bar{\mathbf{x}}$ with $\mathbf{A}\bar{\mathbf{x}}=\mathbf{b}$ and $\mathbf{g}(\bar{\mathbf{x}})<\mathbf{0}$), and the feasible set $\{\mathbf{x}:\mathbf{A}\mathbf{x}=\mathbf{b},\,\mathbf{g}(\mathbf{x})\le\mathbf{0}\}$ is bounded. Then there exists $c^\ast>0$ such that for every $c\in(0,c^\ast)$ the sublevel set $\Omega_c=\{\mathbf{z}\in\mathbb{R}^{n+p+q}:V(\mathbf{z})\le c\}$ is compact.
\end{proposition}}

\begin{proof}[Proof sketch]
\rev{Closedness of $\Omega_c$ is immediate from continuity of $V$. Boundedness is a coercivity property of $V$, established by ruling out escape to infinity in $\mathbf{z}=(\mathbf{x},\boldsymbol{\lambda},\boldsymbol{\mu})$ on $\{V\le c\}$; the dominant mechanisms are: (A) $\|\mathbf{x}\|\to\infty$ is ruled out by strong convexity of $J$, which makes the Lagrangian-gradient block of $\mathbf{S}$ coercive in $\mathbf{x}$; (B) $\|\boldsymbol{\mu}\|\to\infty$ is ruled out by full row rank of $\mathbf{A}$, which forces $\|\mathbf{A}^\top\boldsymbol{\mu}\|\to\infty$ in the same block; (C) $\|\boldsymbol{\lambda}\|\to\infty$: projecting the Lagrangian-gradient block onto $\ker(\mathbf{A})$ removes the $\mathbf{A}^\top\boldsymbol{\mu}$ term irrespective of $\|\boldsymbol{\mu}\|$, so the diverging multipliers force a convex combination of the active constraint gradients to lie in $\operatorname{range}(\mathbf{A}^\top)$ at the limit point; Slater's condition together with a Cauchy--Schwarz step then bounds $\tfrac{1}{2}\|\mathbf{S}\|^2$ below by a strictly positive constant along the ray, so the escape is excluded for every $c$ below that constant. The full coercivity argument, including the joint-escape modes and the use of boundedness of the feasible set, is provided in \iffullversion Appendix~\ref{app:c_star_proof}\else the supplementary material\fi.}
\end{proof}

\rev{The condition $V(\mathbf{z}(0))<c^\ast$ restricts only the initial residual $\|\mathbf{S}(\mathbf{z}(0))\|$; in particular, $\mathbf{x}(0)$ does not have to satisfy the constraints of~\eqref{eq:constrained_problem_general}.}

\begin{proposition}[\rev{Global Convergence Under Affine Constraints}]
\label{prop:affine_global}
\rev{Consider~\eqref{eq:constrained_problem_general} with $\mathbf{S}$ defined by~\eqref{eq:S_FB}, $J$ strongly convex, $\mathbf{h}(\mathbf{x})=\mathbf{A}\mathbf{x}-\mathbf{b}$, and affine inequalities $\mathbf{g}(\mathbf{x})=\mathbf{C}\mathbf{x}-\mathbf{d}$. If $\big[\begin{smallmatrix}\mathbf{A}\\ \mathbf{C}\end{smallmatrix}\big]$ has full row rank, then $\sigma_{\min}(\nabla\mathbf{S})\ge\bar\sigma>0$ on all of $\mathbb{R}^{n+p+q}$, so the global PL inequality $\|\nabla V\|^2\ge 2\bar\sigma^2 V$ holds and the closed-loop trajectory~\eqref{eq:HGD_general} converges globally, irrespective of $c^\ast$.}
\end{proposition}
\begin{proof}[\rev{Proof sketch}]
\rev{With affine $\mathbf{g}$ the modified Lagrangian Hessian reduces to $\mathbf{H}_L=\nabla^2 J\succeq m\mathbf{I}$, and full row rank of $\big[\begin{smallmatrix}\mathbf{A}\\ \mathbf{C}\end{smallmatrix}\big]$ rules out the $\boldsymbol{\lambda}$-escape mode, so $\sigma_{\min}(\nabla\mathbf{S})\ge\bar\sigma>0$ uniformly on $\mathbb{R}^{n+p+q}$. The PL inequality $\|\nabla V\|^2=\|\nabla\mathbf{S}^\top\mathbf{S}\|^2\ge\bar\sigma^2\|\mathbf{S}\|^2=2\bar\sigma^2 V$ then holds globally, and Lemma~\ref{lem:fwd_inv} gives global convergence. The escape-mode analysis is detailed in \iffullversion Appendix~\ref{app:coro_proof}\else the supplementary material\fi.}
\end{proof}

\begin{corollary}
\label{cor:auto_nonsing_general}
\rev{Let the hypotheses of Prop.~\ref{prop:c_star} hold with $J,\mathbf{g},\mathbf{h}\in\mathcal{C}^2$, fix $\varepsilon>0$, and let additionally Assumption~\ref{ass:LICQ_general} hold.} Then $\mathbf{x}^\star$ is the unique global minimizer with unique KKT multipliers $(\boldsymbol{\lambda}^\star,\boldsymbol{\mu}^\star)$. \rev{Moreover, there exists $c^\ast>0$ such that every trajectory~\eqref{eq:HGD_general} from $V(\mathbf{z}(0))<c^\ast$ converges with the prescribed timing law to the smoothed stationarity point $\mathbf{z}^\star_\varepsilon$ satisfying~\eqref{eq:eps_displacement}.}
\end{corollary}
\begin{proof}[\rev{Proof sketch}]
\rev{Uniqueness follows from strong convexity, Slater, and LICQ~\cite[Thm.~11.12]{beck_introduction_2014}. A block-elimination argument on the smoothed-FB Jacobian~\cite{fischer_special_1992,liao-mcpherson_regularized_2019} makes $\nabla\mathbf{S}$ nonsingular, so on the compact $\Omega_c$ of Prop.~\ref{prop:c_star} ($c\in(0,c^\ast)$) one has $\|\nabla V\|^2\ge 2\sigma_c^2 V$ with $\sigma_c=\min_{\Omega_c}\sigma_{\min}(\nabla\mathbf{S})>0$, i.e.\ Assumption~\ref{ass:lojasiewicz} with $\alpha=1/2$. Lemma~\ref{lem:fwd_inv} then yields convergence. The block-elimination computation is detailed in \iffullversion Appendix~\ref{app:coro_proof}\else the supplementary material\fi.}
\end{proof}

\subsection{Illustrative Example: Network Utility Maximization}
\label{subsec:NUM}

Consider the classical network utility maximization (NUM) problem, widely used in the study of continuous-time primal-dual dynamics~\cite{feijer_stability_2010}.
Let $\mathbf{x}\in\mathbb{R}_{\ge 0}^{S}$ denote the vector of source rates, $\mathbf{R}\in\{0,1\}^{L\times S}$ the routing matrix (with $R_{ij}=1$ if source $j$ uses link $i$), and $\mathbf{c}\in\mathbb{R}^L_{>0}$ the vector of link capacities. Each source $j$ has a strictly concave, continuously differentiable utility $U_j:\mathbb{R}_{>0}\to\mathbb{R}$ associated with it. The NUM problem reads
\begin{equation}
\label{eq:NUM}
\max_{\mathbf{x}\,>\,\mathbf{0}} \;\; \sum_{j=1}^{S} U_j(x_j)
\quad \text{s.t.} \quad \mathbf{R}\mathbf{x} \,\le\, \mathbf{c}.
\end{equation}
Under standard assumptions (strict concavity of $U_j$ and Slater’s condition),~\eqref{eq:NUM} has a unique primal optimum.
We set $U_{j} = \alpha_j \log(x_j)$, a strictly concave function for $x_j > 0$, where $\alpha_{j} > 0$.

\rev{Problem~\eqref{eq:NUM} fits the constrained setup of~\eqref{eq:constrained_problem_general} with the strictly (but not strongly) concave utilities $U_j=\alpha_j\log(x_j)$, so the example falls outside the strong-convexity hypothesis of Corollary~\ref{cor:auto_nonsing_general} and we invoke instead the conditional template of Thm.~\ref{thm:unified_constrained}. Assumptions~\ref{ass:OLF_reg} and~\ref{ass:lojasiewicz} are verified numerically along the simulated trajectories rather than guaranteed a priori; this is the practical regime in which Thm.~\ref{thm:unified_constrained} is meant to be applied.}
We use the stationarity vector from~\eqref{eq:S_FB} with $\varepsilon = 10^{-6}$ for the smoothed FB.

Classical primal-dual dynamics for~\eqref{eq:NUM} exhibit asymptotic or exponential convergence under strict concavity~\cite{feijer_stability_2010}.
Within our framework, the same problem admits FT, FxT, and PT guarantees, tightening classical results while preserving the problem structure.

\rev{Figure~\ref{fig:NUM_V_decay} confirms that the four timing laws realize the expected decay patterns: exponential trajectories converge asymptotically, the FT and FxT designs reach zero in bounded time, and the PT law enforces convergence exactly at the user-specified horizon. The corresponding wall-clock CPU times are 277, 243, 225, and 181 ms for Exp, FT, FxT, and PT, respectively, demonstrating that the law selector carries over unchanged to a canonical constrained problem and provides FT, FxT, and PT guarantees beyond the classical exponential behavior reported in~\cite{feijer_stability_2010}.}

\begin{figure}
  \centering
  \includegraphics[width=0.9\linewidth]{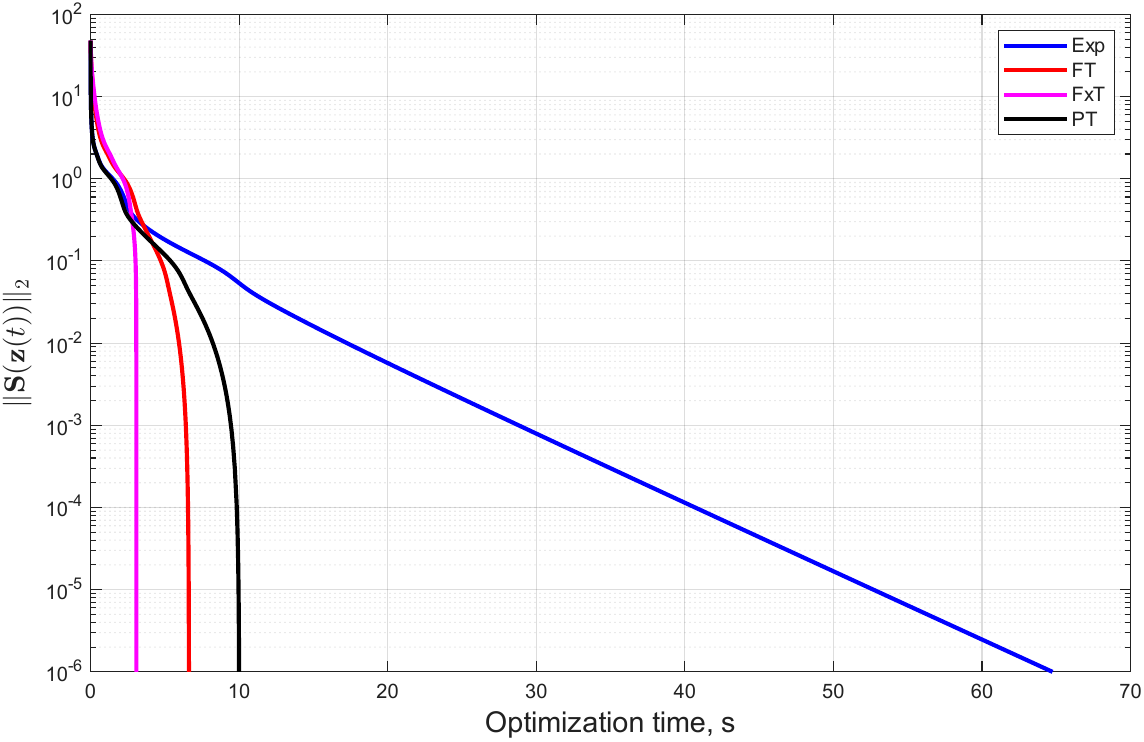}
    \caption{Decay of the \rev{optimization} Lyapunov function $V(\mathbf{z}(t)) = \tfrac{1}{2}\|\mathbf{S}(\mathbf{z}(t))\|^2$ for the NUM problem under the four convergence laws.} 
  \label{fig:NUM_V_decay}
\end{figure}

\iffullversion
\begin{table}
  \centering
  \caption{Wall-clock CPU time (ms) for the NUM problem.}
  \label{tab:NUM_cpu}
  \begin{tabular}{cccc}
    \hline \textbf{Exp} & \textbf{FT} & \textbf{FxT} & \textbf{PT} \\
    \hline \rev{277} & \rev{243} & \rev{225} & \rev{181}   \\
    \hline
  \end{tabular}
\end{table}
\fi

\begin{remark}[\rev{Stationarity Monitoring}]
\label{rem:stat_mon}
\rev{The stationarity vector $\mathbf{S}(\mathbf{z})$ in~\eqref{eq:S_FB} stacks the modified Lagrangian gradient (with smoothed multipliers $\widetilde{\boldsymbol{\lambda}}$) and the smoothed FB:}
\begin{equation}
\mathbf{S}(\mathbf{z}) = 
\begin{bmatrix}
\rev{\nabla J(\mathbf{x})+\mathbf{R}^\top\widetilde{\boldsymbol{\lambda}}} \\[2pt]
\rev{\boldsymbol{\phi}_\varepsilon(\boldsymbol{\lambda}, \mathbf{c} - \mathbf{R}\mathbf{x})}
\end{bmatrix}.
\label{eq:stacked_S}
\end{equation}
Since the norm of a stacked vector dominates that of each block,
\begin{equation}
\|\mathbf{S}(\mathbf{z})\|
\;\ge\;
\max\!\Big\{\,\rev{\|\nabla J(\mathbf{x})+\mathbf{R}^\top\widetilde{\boldsymbol{\lambda}}\|},\;
\rev{\|\boldsymbol{\phi}_\varepsilon(\boldsymbol{\lambda}, \mathbf{c} - \mathbf{R}\mathbf{x})\|}\,\Big\}.
\label{eq:stat_mon_bound}
\end{equation}
Hence, driving $V(\mathbf{z}) = \tfrac{1}{2}\|\mathbf{S}(\mathbf{z})\|^2$ below a threshold guarantees that each optimality component is satisfied to the same numerical accuracy, so a single scalar Lyapunov function monitors convergence of all optimality conditions.
\end{remark}


\section{Constrained Minimax Problems}
\label{sec:minmax}
Constrained minimax problems are central in robust control, adversarial machine learning, and game-theoretic models of resource allocation, where two agents with conflicting objectives interact under shared feasibility constraints. 
\rev{Classical saddle-point dynamics~\cite{cherukuri_saddle-point_2017,cherukuri_role_2018} establish asymptotic or exponential convergence for convex-concave problems, but only in either the unconstrained minimax setting or the primal-dual reformulation of constrained optimization, where the saddle structure is on $(\mathbf{x},\boldsymbol{\lambda},\boldsymbol{\mu})$, not on a pair of decision variables $(\mathbf{x},\mathbf{y})$ subject to shared constraints.}
\rev{More recent works demonstrate FxT convergence in unconstrained minimax settings~\cite{garg_fixed-time_2022}.}
\rev{To our knowledge, convergence in any rate regime, e.g., asymptotic, exponential, FT, FxT, or PT, for genuine constrained convex-concave minimax problems with shared feasibility constraints coupling $(\mathbf{x},\mathbf{y})$ has not been documented.}
This section addresses this gap using our control-centric framework with \rev{the HGD realization}.

\subsection{Problem Statement and Existence}
We consider the constrained minimax problem
\begin{subequations}\label{eq:minimax}
\begin{align}
\min_{\mathbf{x}\in\mathbb{R}^{n_x}}\ \max_{\mathbf{y}\in\mathbb{R}^{n_y}}\quad & J(\mathbf{x},\mathbf{y}) \\[-2pt]
\text{s.t.}\qquad 
& A\,\mathbf{x} + B\,\mathbf{y} - \mathbf{b} = \mathbf{0}, \label{eq:minimax_eq}\\[-2pt]
& \mathbf{G}(\mathbf{x},\mathbf{y}) \le \mathbf{0}, \label{eq:minimax_ineq}
\end{align}
\end{subequations}
where $J(\cdot,\mathbf{y})$ is convex in $\mathbf{x}$ for each fixed $\mathbf{y}$, and $J(\mathbf{x},\cdot)$ is concave in $\mathbf{y}$ for each fixed $\mathbf{x}$. 
The matrices $A\in\mathbb{R}^{q\times n_x}$ and $B\in\mathbb{R}^{q\times n_y}$ define $q$ affine equality constraints, and $\mathbf{G}:\mathbb{R}^{n_x+n_y}\to\rev{\mathbb{R}^p}$ collects \rev{$p$} convex inequality constraints. \rev{We assume the stacked equality Jacobian $[A\ B]\in\mathbb{R}^{q\times(n_x+n_y)}$ has full row rank $q$; this is a direct structural assumption, not a consequence of LICQ at the solution.}

\rev{We impose the regularity conditions of Assumptions~\ref{ass:LICQ_general} and~\ref{ass:Slater_general}.}
\rev{For convex-concave saddle problems on product compact convex domains, classical existence and the equality $\min\max=\max\min$ are given by Sion's theorem~\cite{sion_general_1958}. Problem~\eqref{eq:minimax}, however, has a coupled feasible set, so we work in the variational-inequality (VI) form: $(\mathbf{x}^\star,\mathbf{y}^\star)$ is a saddle point iff it solves the VI on the (coupled) feasible set with operator $\mathcal{F}(\mathbf{w})=(\nabla_\mathbf{x} J,-\nabla_\mathbf{y} J)$, $\mathbf{w}=(\mathbf{x},\mathbf{y})$, which is monotone under convex-concavity of $J$ and strongly monotone under strong convex-concavity. Existence of a saddle point on the compact convex feasible set then follows from VI solvability~\cite[Cor.~2.2.5]{facchinei_finite-dimensional_2003}.}
We also use the OLF regularity condition stated in Assumption~\ref{ass:OLF_reg}.

The KKT conditions for~\eqref{eq:minimax} introduce multipliers 
$\boldsymbol{\mu}\in\mathbb{R}^q$ for the equalities~\eqref{eq:minimax_eq} 
and $\boldsymbol{\lambda}\in\rev{\mathbb{R}^p_{\ge 0}}$ for the inequalities~\eqref{eq:minimax_ineq}. 
The KKT system reads
\begin{subequations}\label{eq:KKT_minimax}
\begin{align}
&\nabla_{\mathbf{x}} J(\mathbf{x}^\star,\mathbf{y}^\star) 
   + A^\top \boldsymbol{\mu}^\star 
   + \nabla_{\mathbf{x}} \mathbf{G}(\mathbf{x}^\star,\mathbf{y}^\star)^\top \boldsymbol{\lambda}^\star = 0, \label{eq:KKT_x}\\
-&\nabla_{\mathbf{y}} J(\mathbf{x}^\star,\mathbf{y}^\star) 
   + B^\top \boldsymbol{\mu}^\star 
   + \nabla_{\mathbf{y}} \mathbf{G}(\mathbf{x}^\star,\mathbf{y}^\star)^\top \boldsymbol{\lambda}^\star = 0, \label{eq:KKT_y}\\
& \mathbf{G}(\mathbf{x}^\star,\mathbf{y}^\star) \le 0,\quad 
\boldsymbol{\lambda}^\star \ge 0,\quad 
\langle \boldsymbol{\lambda}^\star, \mathbf{G}  (\mathbf{x}^\star,\mathbf{y}^\star)\rangle = 0, \label{eq:KKT_comp}\\
& A \mathbf{x}^\star + B \mathbf{y}^\star - \mathbf{b} = 0. \label{eq:KKT_eq}
\end{align}
\end{subequations}

Let $\mathbf{z}\coloneqq(\mathbf{x},\mathbf{y},\boldsymbol{\lambda},\boldsymbol{\mu})$, where $\boldsymbol{\lambda}$ and $\boldsymbol{\mu}$ are the Lagrange multipliers for the inequality and equality constraints in~\eqref{eq:minimax}.  
We define the stationarity vector $\mathbf{S}(\mathbf{z})$ as
\begin{equation}\label{eq:S_minimax}
\rev{\mathbf{S}}(\mathbf{z}) =
\begin{bmatrix}
\rev{\nabla_{\mathbf{x}} J(\mathbf{x},\mathbf{y}) + A^\top \boldsymbol{\mu} + \nabla_{\mathbf{x}} \mathbf{G}(\mathbf{x},\mathbf{y})^\top \widetilde{\boldsymbol{\lambda}}} \\[2pt]
\rev{-\nabla_{\mathbf{y}} J(\mathbf{x},\mathbf{y}) + B^\top \boldsymbol{\mu} + \nabla_{\mathbf{y}} \mathbf{G}(\mathbf{x},\mathbf{y})^\top \widetilde{\boldsymbol{\lambda}}} \\[2pt]
\rev{\boldsymbol{\phi}_{\varepsilon}\!\big(\boldsymbol{\lambda},\,-\mathbf{G}(\mathbf{x},\mathbf{y})\big)} \\[2pt]
A\mathbf{x}+B\mathbf{y}-\mathbf{b} 
\end{bmatrix}.
\end{equation}
\rev{Here $\widetilde{\boldsymbol{\lambda}}$ is defined componentwise via~\eqref{eq:lambda_tilde}.}
The condition $\mathbf{S}(\mathbf{z})=\mathbf{0}$ \rev{enforces the KKT system~\eqref{eq:KKT_minimax} up to the smoothing tolerance $\varepsilon$, encompassing stationarity in both $\mathbf{x}$ and $\mathbf{y}$}.

Following our framework, we associate to $\mathbf{S}$ the quadratic OLF~\eqref{eq:V_quad}, and define the dynamics
\begin{equation}
\label{eq:HGD_minimax}
\dot{\mathbf{z}} \;=\; \mathbf{u}(\mathbf{z},t) \;=\; -\,\frac{\nabla \mathbf{S}(\mathbf{z})^\top \mathbf{S}(\mathbf{z})}{\big\|\nabla \mathbf{S}(\mathbf{z})^\top \mathbf{S}(\mathbf{z})\big\|^2}\;\sigma\!\big(V(\mathbf{z}),t\big).
\end{equation}
This ensures that the derivative of $V$ along trajectories is shaped by the selected decay law.

\subsection{Convex-Concave Case}
\begin{theorem}[Convex-Concave Minimax]\label{thm:minimax_main}
Suppose $J(\mathbf{x},\mathbf{y}) \in \mathcal{C}^2$ is convex in $\mathbf{x}$ and concave in $\mathbf{y}$, and the feasible set of~\eqref{eq:minimax} is compact. 
\rev{Suppose Assumptions~\ref{ass:OLF_reg} and~\ref{ass:lojasiewicz} hold on an open neighborhood $\mathcal{U}$ of the saddle point, and let $\Omega_c\subseteq\mathcal{U}$ be a forward-invariant sublevel set as in Lemma~\ref{lem:fwd_inv}.}
Assume Assumption~\ref{ass:LICQ_general} holds at the KKT point, and that there exists $(\bar{\mathbf{x}},\bar{\mathbf{y}})$ \rev{satisfying} Assumption~\ref{ass:Slater_general}. 
Then, a saddle point $(\mathbf{x}^\star,\mathbf{y}^\star)$ exists, and \rev{for every $\mathbf{z}(0)\in\Omega_c$ the trajectory of}~\eqref{eq:HGD_minimax} remains in the compact set $\Omega_c$ and satisfies $\mathbf{S}(\mathbf{z}(t))\to\mathbf{0}$ \rev{with the convergence law selected in~\eqref{eq:sigma_def}}; in particular, any cluster point of $\mathbf{z}(t)$ (which exists by compactness of $\Omega_c$) is a \rev{KKT point} of~\eqref{eq:minimax} satisfying the KKT system~\eqref{eq:KKT_x}--\eqref{eq:KKT_eq} up to the smoothing tolerance $\varepsilon$.
\end{theorem}

\begin{proof}[Proof sketch]
\rev{The VI form of~\eqref{eq:minimax} admits a saddle point on the compact convex feasible set~\cite[Cor.~2.2.5]{facchinei_finite-dimensional_2003}; Slater gives existence of $(\boldsymbol{\lambda}^\star,\boldsymbol{\mu}^\star)$ and LICQ their uniqueness~\cite[Thm.~11.12]{beck_introduction_2014}, so $\mathcal{S}\cap\mathcal{U}$ lies in the compact feasible set and is bounded. Lemma~\ref{lem:fwd_inv} then yields a forward-invariant $\Omega_c\subseteq\mathcal{U}$, and the chain rule gives $\dot V=-\sigma(V,t)$; since $V=\tfrac12\|\mathbf{S}\|^2$, this drives $\mathbf{S}(\mathbf{z}(t))\to\mathbf{0}$ with the chosen timing, enforcing the KKT conditions up to the smoothing tolerance $\varepsilon$, and by compactness of $\Omega_c$ and continuity of $\mathbf{S}$ every cluster point is a KKT point of~\eqref{eq:minimax}. The full argument, including well-definedness of~\eqref{eq:HGD_minimax} along the trajectory and the chain-rule computation, is given in \iffullversion Appendix~\ref{app:thm_minimax_proof}\else the supplementary material\fi.}
\end{proof}

\rev{Saddle-point dynamics under convex-concave assumptions have been shown to achieve asymptotic or exponential convergence~\cite{cherukuri_saddle-point_2017,cherukuri_role_2018}, and this analysis applies in particular to unconstrained minimax problems.} 
Our framework recovers these results as special cases \rev{and extends them to the shared constrained case while guaranteeing} FT, FxT, or PT convergence by an appropriate choice of the decay law.

\subsection{Strongly Convex-Strongly Concave Case}
We next consider the standard strengthening that yields uniqueness.

\begin{corollary}\label{cor:scsc}
Suppose \rev{$J\in\mathcal{C}^2$, $J(\cdot,\mathbf{y})$} is \mbox{$m_x$-strongly} convex in $\mathbf{x}$\rev{,} and $J(\mathbf{x},\cdot)$ is \mbox{$m_y$-strongly} concave in $\mathbf{y}$ with $m_x,m_y>0$.
Assume Assumption~\ref{ass:LICQ_general} holds at the KKT point and that there exists $(\bar{\mathbf{x}},\bar{\mathbf{y}})$ \rev{satisfying} Assumption~\ref{ass:Slater_general}. 
Then the saddle point $(\mathbf{x}^\star,\mathbf{y}^\star)$ of~\eqref{eq:minimax} is unique. \rev{Assume further that the feasible set of~\eqref{eq:minimax} is bounded.} \rev{Then there exists $c^\ast>0$ such that for every $\mathbf{z}(0)$ with $V(\mathbf{z}(0))<c^\ast$, the trajectory of~\eqref{eq:HGD_minimax} converges with the convergence law selected in~\eqref{eq:sigma_def} to the smoothed stationarity point $\mathbf{z}^\star_\varepsilon$, $O(\varepsilon)$-close to the exact KKT point $\mathbf{z}^\star=(\mathbf{x}^\star,\mathbf{y}^\star,\boldsymbol{\lambda}^\star,\boldsymbol{\mu}^\star)$ in the sense of~\eqref{eq:eps_displacement}. When $\mathbf{G}$ is affine in $(\mathbf{x},\mathbf{y})$, $\mathbf{G}(\mathbf{x},\mathbf{y})=\mathbf{C}\,\mathrm{col}(\mathbf{x},\mathbf{y})-\mathbf{d}$, and $\big[\begin{smallmatrix}[\,\mathbf{A}\ \mathbf{B}\,]\\ \mathbf{C}\end{smallmatrix}\big]$ has full row rank, the conclusion is global, irrespective of $c^\ast$.}
\end{corollary}
\begin{proof}[Proof sketch]
\rev{Strong convex-concavity makes the saddle operator $\mathcal{F}(\mathbf{w})\coloneqq\operatorname{col}(\nabla_{\mathbf{x}}J,-\nabla_{\mathbf{y}}J)$ $m$-strongly monotone with $m=\min(m_x,m_y)$, which gives existence and uniqueness of the saddle point without feasible-set compactness; boundedness of the feasible set is still used to obtain compactness of the sublevel sets $\Omega_c$ for $c<c^\ast$. A block-elimination argument on the smoothed-FB Jacobian makes $\nabla\mathbf{S}$ nonsingular, so Assumption~\ref{ass:lojasiewicz} holds on $\Omega_c$ with $\alpha=1/2$ and Lemma~\ref{lem:fwd_inv} yields convergence. The block-elimination computation and the sublevel-set compactness argument are given in \iffullversion Appendix~\ref{app:scsc_proof}\else the supplementary material\fi.}
\end{proof}

\rev{The unconstrained minimax case ($p=q=0$) is a direct specialization: with no inequality block, the full-row-rank condition on the equality Jacobian holds vacuously, so convergence is global.}

\section{Generalized Nash Equilibrium Seeking Problems}
\label{sec:gne}
Generalized Nash equilibrium (GNE) seeking problems arise when agents share coupling constraints, such as common resources or safety requirements, while pursuing individual objectives. 
These problems have been extensively studied from both the variational inequality and optimization perspectives; see the comprehensive survey in~\cite{facchinei_generalized_2010}. 
Classical reformulations based on Nikaido-Isoda functions and their regularized variants~\cite{von_heusinger_optimization_2009} provide optimization-based characterizations of (normalized) GNE. Algorithmic approaches have primarily relied on penalty and augmented Lagrangian methods, which embed the constraints into the agents’ costs. 
Representative contributions include augmented Lagrangian frameworks~\cite{kanzow_augmented_2016}, and, more recently, continuous-time penalty dynamics that establish asymptotic or exponential convergence~\cite{sun_continuous-time_2021}\rev{, as well as hierarchical-game extensions with convergence rate guarantees~\cite{huang_distributed_2024}}.

Despite these advances, existing continuous-time GNE seeking algorithms remain fundamentally penalty- or projection-based and \rev{do not provide explicit FT, FxT, or PT guarantees for the general v-GNE problem with coupled equality and inequality constraints}. Moreover, their convergence analyses hinge on asymptotic Lyapunov arguments. 
In contrast, the control-centric OLF framework developed in this paper provides explicit convergence rate guarantees that extend beyond asymptotic or exponential stability. 
In what follows, we specialize this framework to \rev{strongly monotone games with convex shared constraints}.

\subsection{\rev{Problem Statement and Stationarity Vector}}
\rev{
Consider $N$ players and let $\mathbf{x}_{-i}$ denote the stacked decisions of all players other than player $i$. Each player $i \in \{1,\dots,N\}$ chooses a decision vector $\mathbf{x}_i \in \mathbb{R}^{n_i}$ subject to a constraint set $\mathcal{X}_i(\mathbf{x}_{-i})$ that depends on $\mathbf{x}_{-i}$ via the shared coupling constraints (defined below). 
Then, each player seeks to minimize its local cost function
\begin{equation}
    \min_{\mathbf{x}_i \in \mathcal{X}_i} J_i(\mathbf{x}_i, \mathbf{x}_{-i}),
    \label{eq:gne_problem_general}
\end{equation}
where $J_i:\mathbb{R}^{n}\!\to\!\mathbb{R}$ is continuously differentiable and depends on both 
the player's own decision and those of the others.}

A profile $\bar{\mathbf{x}}=\operatorname{col}(\bar{\mathbf{x}}_1,\dots,\bar{\mathbf{x}}_N)$ with $n=\sum_i n_i$ is a GNE if it is feasible for the shared constraints and no player can reduce its cost by a unilateral deviation that also respects those shared constraints; namely,
\begin{equation}
J_i(\bar{\mathbf{x}}_i,\bar{\mathbf{x}}_{-i}) \le J_i(\mathbf{x}_i,\bar{\mathbf{x}}_{-i}) \quad \forall\, \mathbf{x}_i \in \mathcal{X}_i(\bar{\mathbf{x}}_{-i}),
\label{eq:gne_ineq_def}
\end{equation}
with $\mathcal{X}_i(\mathbf{x}_{-i}) := \{\mathbf{x}_i \mid \mathbf{A}\mathbf{x}=\mathbf{b},\ \mathbf{g}(\mathbf{x})\le \mathbf{0}\}$ where $\mathbf{A}\in\mathbb{R}^{q\times n}$, $\mathbf{b}\in\mathbb{R}^q$, and $\mathbf{g}:\mathbb{R}^n\to\rev{\mathbb{R}^p}$ is $C^1$. 
This differs from a standard Nash equilibrium because each player's feasible set depends on the others via the shared constraints.
In convex settings, equilibria for which all players share the same multipliers $(\boldsymbol{\lambda},\boldsymbol{\mu})$ for the joint constraints are called variational GNE (v-GNE); these coincide with solutions of a single KKT system~\cite{dreves_solution_2011} and are the natural target for our OLF design.

Denote the (stacked) pseudogradient mapping
\begin{align}
\mathcal{G}(\mathbf{x}) \coloneqq \operatorname{col}\!\big(\nabla_{\mathbf{x}_1}J_1(\mathbf{x}_1,\mathbf{x}_{-1}),\dots,\nabla_{\mathbf{x}_N}J_N(\mathbf{x}_N,\mathbf{x}_{-N})\big). \label{eq:pseudogradient}
\end{align}
We adopt the variational formulation of GNE, using shared Lagrange multipliers $\boldsymbol{\lambda}\in\rev{\mathbb{R}^p_{\ge 0}}$ for $\mathbf{g}(\mathbf{x})\le\mathbf{0}$ and $\boldsymbol{\mu}\in\mathbb{R}^q$ for $\mathbf{A}\mathbf{x}=\mathbf{b}$. The KKT system reads
\begin{subequations}
\begin{align}
&\mathcal{G}(\mathbf{x}) + \nabla \mathbf{g}(\mathbf{x})^\top \boldsymbol{\lambda} + \mathbf{A}^\top \boldsymbol{\mu} = \mathbf{0}, \quad \mathbf{A}\mathbf{x}-\mathbf{b}=\mathbf{0},\label{eq:KKT_stationarity}\\
&\mathbf{g}(\mathbf{x})\le \mathbf{0},\ \boldsymbol{\lambda}\ge \mathbf{0},\ \langle\boldsymbol{\lambda}, \mathbf{g}(\mathbf{x})\rangle=0. \label{eq:KKT_feas_comp}
\end{align}
\end{subequations}

Following our convention of encoding inequality constraints via the smoothed FB function, we define
\begin{align}
&\mathbf{r}_{\mathrm{eq}}(\mathbf{x}) \coloneqq \mathbf{A}\mathbf{x}-\mathbf{b}, 
\label{eq:req} \\
&\rev{\mathbf{r}_{\mathrm{ineq}}(\mathbf{x},\boldsymbol{\lambda}) \coloneqq \boldsymbol{\phi}_\varepsilon\!\big(\boldsymbol{\lambda},\,-\mathbf{g}(\mathbf{x})\big).}
\label{eq:rineq}
\end{align}
With the stacked primal-dual variable $\mathbf{z}\coloneqq \operatorname{col}(\mathbf{x},\boldsymbol{\lambda},\boldsymbol{\mu})$, the stationarity vector is
\begin{align}
\mathbf{S}(\mathbf{z}) \coloneqq \operatorname{col}\!\Big(
\rev{\mathcal{G}(\mathbf{x})+\nabla \mathbf{g}(\mathbf{x})^\top \widetilde{\boldsymbol{\lambda}}+\mathbf{A}^\top \boldsymbol{\mu}},
\mathbf{r}_{\mathrm{eq}}(\mathbf{x}),\;
\mathbf{r}_{\mathrm{ineq}}(\mathbf{x},\boldsymbol{\lambda})
\Big).
\label{eq:S_def}
\end{align}
\rev{Here $\widetilde{\boldsymbol{\lambda}}$ is defined componentwise via~\eqref{eq:lambda_tilde}; $\mathbf{r}_{\mathrm{ineq}}$ retains the original $\boldsymbol{\lambda}$.}
Then $\mathbf{S}(\mathbf{z}^\star)=\mathbf{0}$ if and only if $\mathbf{z}^\star$ satisfies \eqref{eq:KKT_stationarity} exactly and~\eqref{eq:KKT_feas_comp} up to $\varepsilon > 0$.

We define the following underlying assumption for the GNE problem.
\begin{assumption}[\rev{Strong Monotonicity}]\label{ass:S1_strong_mono}
There exists $m>0$ such that for all $\mathbf{x},\mathbf{y}\in\mathbb{R}^n$
\begin{align}
\langle \mathcal{G}(\mathbf{x})-\mathcal{G}(\mathbf{y}),\,\mathbf{x}-\mathbf{y}\rangle \ge m\|\mathbf{x}-\mathbf{y}\|^2. \label{eq:strong_mono}
\end{align}
Moreover, $\mathcal{G}$ is locally Lipschitz, $\mathbf{A}$ has full row rank, $\mathbf{g}$ is $C^1$\rev{ and each component $g_j$ is convex}, and the shared feasible set $\{\mathbf{x}:\mathbf{A}\mathbf{x}=\mathbf{b},\,\mathbf{g}(\mathbf{x})\le\mathbf{0}\}$ is nonempty, closed, convex, and bounded, with a Slater point.
\end{assumption}

\rev{While Assumption~\ref{ass:S1_strong_mono} states strong monotonicity globally on $\mathbb{R}^n$, the proof of Thm.~\ref{thm:S1} only invokes its restriction to $\ker(\mathbf{A})$; growth orthogonal to $\ker(\mathbf{A})$ is handled separately by the equality-constraint block via full row rank of $\mathbf{A}$. The global form is stated for simplicity.}

\subsection{\rev{Main Result}}
We adopt the quadratic OLF~\eqref{eq:V_quad} and optimizer $\mathbf{u}(\mathbf{z},t)$ as in~\eqref{eq:u_w} to impose the chosen decay law.

\begin{theorem}[\rev{Strongly Monotone Games}]\label{thm:S1}
Under Assumption~\ref{ass:S1_strong_mono}, the v-GNE $\mathbf{z}^\star$ solving \eqref{eq:KKT_stationarity}-\eqref{eq:KKT_feas_comp} exists and is unique. 
\rev{There exists $c^\ast>0$ such that for every $\mathbf{z}(0)$ with $V(\mathbf{z}(0))<c^\ast$,} the \rev{closed-loop} dynamics in~\eqref{eq:u_w}\rev{, instantiated with $\mathbf{S}$ from~\eqref{eq:S_def},} \rev{converge with the convergence law selected in~\eqref{eq:sigma_def} to the smoothed stationarity point $\mathbf{z}^\star_\varepsilon$, $O(\varepsilon)$-close to the exact $v$-GNE $\mathbf{z}^\star$ in the sense of~\eqref{eq:eps_displacement}.} \rev{When $\mathbf{g}(\mathbf{x})=\mathbf{C}\mathbf{x}-\mathbf{d}$ is affine and $\big[\begin{smallmatrix}\mathbf{A}\\ \mathbf{C}\end{smallmatrix}\big]$ has full row rank, the conclusion is global, irrespective of $c^\ast$.}
\end{theorem}
\begin{proof}[Proof sketch]
\rev{Strong monotonicity of $\mathcal{G}$ on $\ker(\mathbf{A})$ gives, in place of strong convexity, boundedness of the sublevel sets $\Omega_c$ for $c<c^\ast$, with full row rank of $\mathbf{A}$ controlling the equality block; together they also give existence and uniqueness of the $v$-GNE. A block-elimination argument on the smoothed-FB Jacobian makes $\nabla\mathbf{S}$ nonsingular, so Assumption~\ref{ass:lojasiewicz} holds on $\Omega_c$ with $\alpha=1/2$ and Lemma~\ref{lem:fwd_inv} yields convergence. The block-elimination computation and the sublevel-set compactness argument are given in \iffullversion Appendix~\ref{app:nonsing_full}\else the supplementary material\fi.}
\end{proof}

\rev{The classical (unconstrained) Nash equilibrium and unconstrained minimax cases are recovered by setting $p=q=0$, which eliminates the FB and equality blocks of $\mathbf{S}$. With no inequality block, the full-row-rank condition holds vacuously, and convergence is global.}

\subsection{Numerical Example: Cournot GNE with Shared Constraints}\label{subsec:cournot_gne}

We illustrate Thm.~\ref{thm:S1} with a Cournot competition game~\cite{martinez-piazuelo_payoff_2022}. \rev{The shared constraints~\eqref{eq:cournot_constraints} are affine but, with $11$ inequality and equality rows in $\mathbb{R}^8$, do not satisfy the global full-row-rank hypothesis; the example is therefore covered by the semiglobal statement of Thm.~\ref{thm:S1}, whose bounded-feasible-set hypothesis holds since nonnegativity together with the capped aggregate $\mathbf{C}\mathbf{x}$ makes the feasible polytope compact.} 
Consider $N=4$ firms and $M=2$ markets. Each firm selects a production vector $\mathbf{x}_k\in\mathbb{R}^2_{\ge0}$, and the aggregate supply is $\mathbf{C}\mathbf{x}$. 
Market prices follow a linear inverse demand
\begin{equation}
\mathbf{J}({\mathbf{C}}\mathbf{x}) = \bar{\mathbf{J}} - D\,{\mathbf{C}}\mathbf{x}, \; 
\bar{\mathbf{J}} = \begin{bmatrix} 10 & 8 \end{bmatrix}^\top, \;
D = \operatorname{diag}(1,1),
\end{equation}
and each firm incurs a quadratic production cost $Q_k(\mathbf{x}_k) = \tfrac{1}{2}\|\mathbf{x}_k\|^2$. 
The shared constraints consist of one linear equality (a target supply in the first market) and market capacities,
\begin{equation}
\mathbf{e}_1^\top {\mathbf{C}}\mathbf{x} = 12, 
\qquad {\mathbf{C}}\mathbf{x} \preceq \begin{bmatrix} 20 & 15 \end{bmatrix}^\top.
\label{eq:cournot_constraints}
\end{equation}


\rev{The simulations in Fig.~\ref{fig:Cournot_decay} confirm that the Cournot game behaves in line with the predictions of Thm.~\ref{thm:S1}: the exponential law converges asymptotically, the FT and FxT dynamics drive the stationarity vector to zero in bounded time, and the PT design enforces convergence exactly at the user-specified horizon. The corresponding wall-clock CPU times are 163, 89, 76, and 52 ms for Exp, FT, FxT, and PT, respectively. The trajectories of the multipliers and aggregate market quantities (not shown) verify that both the linear equality and affine inequality constraints remain satisfied along the solution path, validating the applicability of the OLF-based dynamics in this game-theoretic setting. }
\begin{figure}[t]
  \centering
  \includegraphics[width=0.9\linewidth]{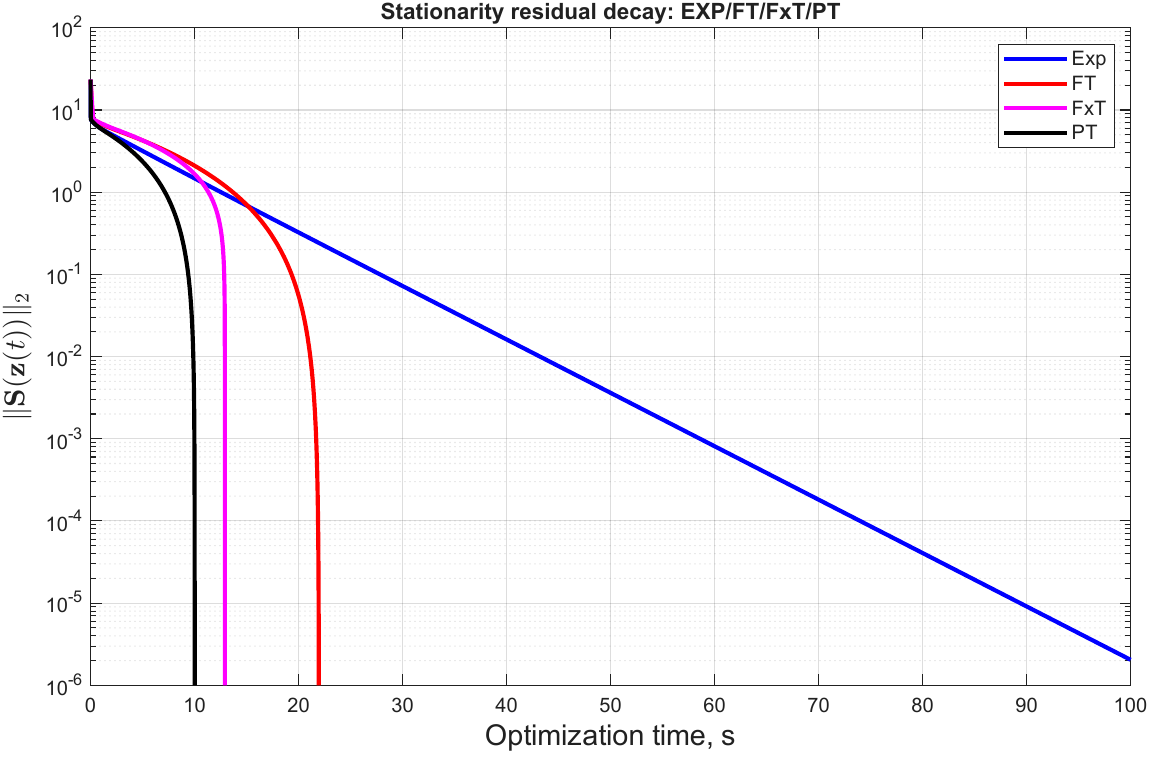}
    \caption{Decay of the \rev{optimization} Lyapunov function $V(\mathbf{z}(t)) = \tfrac{1}{2}\|\mathbf{S}(\mathbf{z}(t))\|^2$ for the Cournot game under the four convergence laws.} 
  \label{fig:Cournot_decay}
\end{figure}

\iffullversion
\begin{table}[t]
  \centering
  \caption{Wall-clock CPU time (ms) for the Cournot GNE.}
  \label{tab:Cournot_cpu}
  \begin{tabular}{cccc}
    \hline \textbf{Exp} & \textbf{FT} & \textbf{FxT} & \textbf{PT} \\
    \hline \rev{163} & \rev{89} & \rev{76} & \rev{52}   \\
    \hline
  \end{tabular}
\end{table}
\fi

\rev{As in Remark~\ref{rem:stat_mon}, since the Lyapunov function is built directly from the stacked stationarity vector, driving $V(\mathbf{z})$ below a given tolerance automatically certifies that each optimality component is satisfied to the same numerical accuracy.}

The Cournot game thus illustrates how the dynamics extend beyond convex optimization to multi-agent equilibria with rigorous convergence and constraint-satisfaction guarantees.

    \section{Conclusions}
    \label{sec:conc}
    
    \rev{This paper introduced a control-centric framework for the systematic design of continuous-time optimization algorithms. The framework is founded on optimization Lyapunov functions (OLFs), which pair a Lyapunov candidate encoding the optimality conditions with a law selector that specifies the desired convergence rate, yielding feedback dynamics with guaranteed exponential, FT, FxT, or PT convergence. Three realizations were developed: the Hessian-gradient, Newton, and gradient dynamics. They differ in structure and information requirements but share the same Lyapunov-based synthesis principle, ranging from a normalized second-order descent that avoids explicit matrix inversion (HGD), through full Hessian feedback (ND), to first-order gradient feedback (GD). The framework was applied to unconstrained, constrained, minimax, and GNE seeking problems, recovering known results as special cases and extending convergence guarantees beyond the asymptotic and exponential settings available in prior work. This perspective offers systematic design tools that complement existing algorithmic approaches and may inform future developments in intelligent optimization and cyber-physical decision-making.}

\iffullversion
    \appendix
    \section{Proofs}\label{app:proofs}

    \subsection{Proof of Lemma~\ref{lem:svb}}
    \label{app:gd_proof}

    \begin{proof}
    From~\eqref{eq:V_quad} and~\eqref{eq:Vdot_general}, $\dot V=-\gamma\,\mathbf{S}^\top\nabla\mathbf{S}\,\mathbf{S}$. Decompose
    \[
    \nabla\mathbf{S}=H+K,\quad H:=\tfrac12(\nabla\mathbf{S}+\nabla\mathbf{S}^\top),\;\;K:=\tfrac12(\nabla\mathbf{S}-\nabla\mathbf{S}^\top).
    \]
    Since $K^\top=-K$, $\mathbf{S}^\top K\mathbf{S}=0$, so by~\eqref{eq:symm_part_cond}
    \[
    \mathbf{S}^\top\nabla\mathbf{S}\,\mathbf{S}=\mathbf{S}^\top H\mathbf{S}\ge m\|\mathbf{S}\|^2.
    \]
    Substituting yields $\dot V\le-\gamma m\|\mathbf{S}\|^2=-\gamma m(2V)=-\sigma(V,t)$, which is~\eqref{eq:gd_vdot_bound}.
    \end{proof}

    \rev{The following example, deferred from Sec.~\ref{sec:control_centric_olf}, illustrates the GD feedback in the one setting where the symmetric-part hypothesis~\eqref{eq:symm_part_cond} holds globally.}

    \rev{\begin{exmp}[GD for Unconstrained Optimization]
    \label{ex:gd_unconstrained}
    Let $\mathbf{S}(\mathbf{x})=\nabla J(\mathbf{x})$ and $V(\mathbf{x})=\tfrac{1}{2}\|\mathbf{S}(\mathbf{x})\|^2$. Assume $J$ is strongly convex, so that $\nabla^2 J(\mathbf{x})\succeq mI$ for all $\mathbf{x}\in\mathbb{R}^n$ and some $m>0$. The gradient feedback $\mathbf{u}(\mathbf{x},t)=-\gamma(V(\mathbf{x}),t)\nabla J(\mathbf{x})$ with $\gamma(V,t)=\sigma(V,t)/(2mV)$ then yields $\dot V=-\gamma\nabla J^\top\nabla^2 J\nabla J\le-\gamma m\|\nabla J\|^2=-\sigma(V(\mathbf{x}),t)$ for any decay law $\sigma$ as in~\eqref{eq:sigma_def}, so the timing law is enforced in inequality form under strong convexity.
    \end{exmp}}

    \subsection{Proof of Lemma~\ref{lem:fwd_inv}}
    \label{app:fwd_inv_proof}

    \begin{proof}
    \rev{The proof has two parts. Steps 1--3 are feedback-agnostic: they use $V$ alone to establish closedness and a geometric tube bound on $\Omega_c$. Step 4 uses $\dot V\le 0$ along the closed-loop trajectory to conclude forward invariance, and is therefore the only step that depends on the specific feedback law.}

    \rev{Step 1 (closedness). Continuity of $V$ implies that $\Omega_c$ is closed in $\mathcal{U}$.}

    \rev{Step 2 (tube bound via an auxiliary gradient flow on $V$). Fix any $\mathbf{z}\in\Omega_c\setminus\mathcal{S}$ and consider the auxiliary flow
    \begin{equation}
    \dot{\boldsymbol{\xi}}(s)\;=\;-\nabla V(\boldsymbol{\xi}(s)),\qquad \boldsymbol{\xi}(0)=\mathbf{z}.
    \label{eq:aux_flow}
    \end{equation}
    The flow~\eqref{eq:aux_flow} depends on $V$ alone and is unrelated to the closed-loop dynamics of Lemmas~\ref{lem:pseudo} and~\ref{lem:svb}. For the HGD feedback~\eqref{eq:u_w}, $\dot{\mathbf{z}}$ is parallel to $-\nabla V$ with $\|\dot{\mathbf{z}}\|=\sigma/\|\nabla V\|$ and $\dot V=-\sigma$, so the closed-loop orbit coincides with that of~\eqref{eq:aux_flow} up to a time reparameterization; its arc length equals $\int_0^{V(\mathbf{z})}\!dV/\|\nabla V\|$ and is therefore subject to the bound~\eqref{eq:tube_radius} below, independently of the timing law. Along~\eqref{eq:aux_flow}, $V$ is non-increasing, so $\boldsymbol{\xi}(s)$ remains in $\Omega_c$. By Assumption~\ref{ass:lojasiewicz} applied to $\boldsymbol{\xi}(s)$,
    \(
    \dot V(\boldsymbol{\xi}) = -\|\nabla V(\boldsymbol{\xi})\|^2 \le -\,c_{\mathrm{L}}\,V(\boldsymbol{\xi})^\alpha\,\|\nabla V(\boldsymbol{\xi})\|,
    \)
    hence $\|\dot{\boldsymbol{\xi}}(s)\|\le -\dot V(\boldsymbol{\xi}(s))/[c_{\mathrm{L}}\,V(\boldsymbol{\xi}(s))^\alpha]$. Integrating until $V(\boldsymbol{\xi}(s))$ vanishes gives the path-length bound
    \begin{align}
    \int_0^{\infty}\!\|\dot{\boldsymbol{\xi}}(s)\|\,ds
    &\;\le\;
    \int_0^{V(\mathbf{z})}\!\frac{dV}{c_{\mathrm{L}}\,V^\alpha}
    \;=\;\frac{V(\mathbf{z})^{1-\alpha}}{c_{\mathrm{L}}(1-\alpha)} \notag \\
    &\;\le\;\frac{c^{1-\alpha}}{c_{\mathrm{L}}(1-\alpha)}\;=:\;r_c.
    \label{eq:tube_radius}
    \end{align}
    Since $\boldsymbol{\xi}(s)$ accumulates at a point of $\mathcal{S}\cap\mathcal{U}$, the Euclidean distance from $\mathbf{z}$ to that point is bounded by~\eqref{eq:tube_radius}. Hence $\mathrm{dist}(\mathbf{z},\mathcal{S}\cap\mathcal{U})\le r_c$ for every $\mathbf{z}\in\Omega_c$, and $\Omega_c$ is contained in the closed tube
    \(
    T_c\,:=\,\{\mathbf{z}\in\mathcal{U}\,:\,\mathrm{dist}(\mathbf{z},\mathcal{S}\cap\mathcal{U})\le r_c\}.
    \)}

    \rev{Step 3 (conditional boundedness). Boundedness of $\Omega_c$ is not unconditional. Suppose additionally that $\mathcal{S}\cap\mathcal{U}$ is bounded. Then $T_c$ is a finite-radius tube around a bounded set and is itself bounded; consequently, so is $\Omega_c$.}

    \rev{Step 4 (forward invariance under any of the three feedbacks). For any feedback of Lemmas~\ref{lem:w_pseudo}, \ref{lem:pseudo}, or~\ref{lem:svb}, $\dot V\le 0$ on $\mathcal{U}\setminus\mathcal{S}$. Choose $\rho^\ast>0$ small enough that $\Omega_{\rho^\ast}\subseteq\mathcal{N}$ and $T_{\rho^\ast}\subseteq\mathcal{U}$; this is possible because both $\mathcal{N}$ and $\mathcal{U}$ are open neighborhoods of $\mathcal{S}\cap\mathcal{U}$. For every $c\in(0,\rho^\ast]$, $\Omega_c\subseteq T_c\subseteq\mathcal{U}$, and $\dot V\le 0$ on $\Omega_c\setminus\mathcal{S}$. By~\cite[Thm.~4.1]{khalil_nonlinear_2002}, $\Omega_c$ is forward invariant under the closed-loop dynamics: $\mathbf{z}(0)\in\Omega_c$ implies $\mathbf{z}(t)\in\Omega_c$ for all $t\ge 0$.}
    \end{proof}

    \subsection{\rev{PL via a Nonsingular Jacobian on Compact Sublevel Sets}}
    \label{app:lin_PL}

    \rev{\begin{lemma}[PL via a Nonsingular Jacobian on Compact Sublevel Sets]
    \label{lem:lin_PL}
    Let $V(\mathbf{z})=\tfrac{1}{2}\|\mathbf{S}(\mathbf{z})\|^2$ with $\mathbf{S}\in\mathcal{C}^1(\mathcal{U};\mathbb{R}^N)$ and $\mathcal{U}\subseteq\mathbb{R}^N$ (so that $\nabla\mathbf{S}$ is square), and let $\Omega_c=\{\mathbf{z}\in\mathcal{U}:V(\mathbf{z})\le c\}$ be a compact sublevel set on which $\nabla\mathbf{S}(\mathbf{z})$ is nonsingular. Define
    \begin{equation}
    \sigma_c\;\coloneqq\;\min_{\mathbf{z}\in\Omega_c}\sigma_{\min}\!\big(\nabla\mathbf{S}(\mathbf{z})\big)\;>\;0.
    \label{eq:sigma_c}
    \end{equation}
    Then $V$ satisfies the PL inequality on $\Omega_c$:
    \begin{equation}
    \|\nabla V(\mathbf{z})\|^2 \;\ge\; 2\,\sigma_c^{\,2}\,V(\mathbf{z}),\qquad \mathbf{z}\in\Omega_c.
    \label{eq:PL_compact}
    \end{equation}
    In particular, Assumption~\ref{ass:lojasiewicz} holds on $\Omega_c$ with $\alpha=1/2$ and $c_{\mathrm{L}}=\sigma_c\sqrt{2}$.
    \end{lemma}}
    \begin{proof}
    \rev{By the chain rule, $\nabla V(\mathbf{z})=\nabla\mathbf{S}(\mathbf{z})^\top\mathbf{S}(\mathbf{z})$, so $\|\nabla V(\mathbf{z})\|\ge\sigma_{\min}(\nabla\mathbf{S}(\mathbf{z}))\,\|\mathbf{S}(\mathbf{z})\|\ge\sigma_c\,\|\mathbf{S}(\mathbf{z})\|$. Squaring and using $\|\mathbf{S}\|^2=2V$ yields~\eqref{eq:PL_compact}. The minimum in~\eqref{eq:sigma_c} is attained and positive: $\sigma_{\min}(\nabla\mathbf{S}(\cdot))$ is continuous, $\Omega_c$ is compact, and $\nabla\mathbf{S}$ is nonsingular on $\Omega_c$ by hypothesis.}
    \end{proof}

    \subsection{Analysis of the Regularized HGD}
    \label{app:regularized_hgd}

    \rev{This appendix provides the quantitative details deferred from Remark~\ref{rem:regularizers}: the bound on the regularized feedback, the rate-slowdown formula, and the time regularization for the PT divergent gain.}

    \rev{Bound on the feedback. The regularized feedback~\eqref{eq:HGD_reg} satisfies
    \begin{equation*}
    \|\mathbf{u}_\eta(\mathbf{z},t)\| \;\le\; \frac{\sigma(V,t)}{2\sqrt{\eta}},
    \end{equation*}
    which is finite for every $\eta>0$, so trajectories are forward complete (no finite-time escape) without invoking Assumption~\ref{ass:lojasiewicz}.}

    \rev{Rate slowdown. Substituting~\eqref{eq:HGD_reg} into the chain rule for $V=\tfrac{1}{2}\|\mathbf{S}\|^2$ gives
    \begin{equation*}
    \dot V \;=\; -\,\sigma(V,t)\,\frac{\|\nabla V\|^2}{\|\nabla V\|^2+\eta},
    \end{equation*}
    which matches the prescribed rate only on $\{\|\nabla V\|^2\gg\eta\}$. In the FT, FxT, and PT cases, the prescribed settling time is therefore no longer met. Asymptotic convergence to $\mathcal{S}$ is preserved because, under Assumption~\ref{ass:OLF_reg}, $\nabla V=\mathbf{0}$ implies $\mathbf{S}=\mathbf{0}$, and the regularizer introduces no terminal residual on $V$.}

    \rev{Time regularization for the PT divergent gain. The PT gain $\mu/(T-t)$ admits the regularization $(T-t)^{-1}\mapsto(T-t+\bar\varepsilon)^{-1}$~\cite{song_prescribed-time_2023}, which bounds the feedback on $[0,T]$ at the cost of a small terminal residual on $V$. The trade-off exchanges exact convergence at $t=T$ for a bounded gain on $[0,T]$.}

    \subsection{Proof of Theorem~\ref{thm:unified_constrained}}
    \label{app:thm_constrained_proof}

    \begin{proof}
    \rev{The argument proceeds in three steps. (i) Uniqueness of the multipliers and singleton stationarity set. Assumption~\ref{ass:LICQ_general} ensures uniqueness of the multipliers $(\boldsymbol{\lambda}^\star,\boldsymbol{\mu}^\star)$ associated with $\mathbf{x}^\star$~\cite[Thm.~11.12]{beck_introduction_2014}; by the hypothesis $\mathcal{S}\cap\mathcal{U}=\{\mathbf{z}^\star_\varepsilon\}$ of Thm.~\ref{thm:unified_constrained}, the smoothed stationarity point is the unique zero of $\mathbf{S}$ in $\mathcal{U}$, hence $\mathcal{S}\cap\mathcal{U}$ is bounded. (ii) Forward invariance of $\Omega_c$. Under Assumptions~\ref{ass:OLF_reg} and~\ref{ass:lojasiewicz}, and given the boundedness of $\mathcal{S}\cap\mathcal{U}$ from step (i), Lemma~\ref{lem:fwd_inv} provides $\rho^\ast>0$ such that $\Omega_c\subseteq\mathcal{U}$ is closed, bounded, and forward invariant under~\eqref{eq:HGD_general} for every $c\in(0,\rho^\ast]$; hence for $\mathbf{z}(0)\in\Omega_c$, the trajectory $\mathbf{z}(t)\in\Omega_c\subseteq\mathcal{U}$ for all $t\ge 0$, and Assumption~\ref{ass:OLF_reg} yields $\nabla\mathbf{S}(\mathbf{z}(t))^\top\mathbf{S}(\mathbf{z}(t))\neq\mathbf{0}$ whenever $\mathbf{S}(\mathbf{z}(t))\neq\mathbf{0}$, so the denominator in~\eqref{eq:HGD_general} is well-defined. (iii) Convergence with the prescribed rate. Substituting~\eqref{eq:HGD_general} into the chain rule for $V=\tfrac{1}{2}\|\mathbf{S}\|^2$ gives}
    \begin{equation}
    \dot V
    =
    \mathbf{S}^\top \nabla\mathbf{S}\left(-\frac{\nabla \mathbf{S}^\top \mathbf{S}}{\|\nabla \mathbf{S}^\top \mathbf{S}\|^2}\sigma(V,t)\right)
    = -\,\sigma(V,t),
    \end{equation}
    \rev{which is~\eqref{eq:Vdot_scalar_template}.
    Since $V=\frac12\|\mathbf{S}\|^2$, the decay of $V$ to zero implies $\mathbf{S}(\mathbf{z}(t))\to \mathbf{0}$ with the indicated timing; the KKT conditions associated with the inequality constraints are enforced up to the smoothing tolerance $\varepsilon$ of~\eqref{eq:FB_smooth_gen}.}
    \end{proof}

    \subsection{Proof of Proposition~\ref{prop:c_star}}
    \label{app:c_star_proof}

    \begin{proof}
    \rev{Closedness of $\Omega_c$ in $\mathbb{R}^{n+p+q}$ is immediate from continuity of $V$. We establish boundedness ($V$ is coercive on $\{V\le c\}$, $c<c^\ast$) in two stages. Stage~1 reduces the dual block to the primal block: $\boldsymbol{\mu}$ is controlled by $(\mathbf{x},\boldsymbol{\lambda})$, and $\boldsymbol{\lambda}$ by $\mathbf{x}$. Stage~2 rules out escape of $\mathbf{x}$, treating the simultaneous divergence of $\mathbf{x}$ and the multipliers via the FB structure and boundedness of the feasible set. Throughout, $V\le c$ bounds every block of $\mathbf{S}$ by $\sqrt{2c}$.}

    \rev{\emph{Stage 1a ($\boldsymbol{\mu}$ via $(\mathbf{x},\boldsymbol{\lambda})$).} From the first block, $\mathbf{A}^\top\boldsymbol{\mu}=\mathbf{s}_1-\nabla J(\mathbf{x})-\nabla\mathbf{g}(\mathbf{x})^\top\widetilde{\boldsymbol{\lambda}}$, so full row rank of $\mathbf{A}$ gives $\|\boldsymbol{\mu}\|\le\sigma_{\min}(\mathbf{A})^{-1}\big(\sqrt{2c}+\|\nabla J(\mathbf{x})\|+\|\nabla\mathbf{g}(\mathbf{x})^\top\widetilde{\boldsymbol{\lambda}}\|\big)$: $\boldsymbol{\mu}$ is bounded once $(\mathbf{x},\boldsymbol{\lambda})$ are. It therefore suffices to bound $\mathbf{x}$ and $\boldsymbol{\lambda}$.}

    \rev{\emph{Stage 1b (FB localization, with or without $\mathbf{x}$ bounded).} For $a,b\ge 0$, $\phi_\varepsilon(a,b)\le\varepsilon-ab/(a+b)$, since $\sqrt{a^2+b^2}-(a+b)=-2ab/(\sqrt{a^2+b^2}+a+b)\le -ab/(a+b)$. Let $\mathcal{J}_+=\{j:\lambda_j^k\to+\infty\}$ along an escape sequence. For $j\in\mathcal{J}_+$, taking $a=\lambda_j^k$, $b=-g_j(\mathbf{x}^k)$, boundedness of the FB block excludes $-g_j(\mathbf{x}^k)\to+\infty$ (else $\phi_\varepsilon\to-\infty$), and the symmetric counterpart of the sign screen (C1) in Stage~3 below excludes $g_j(\mathbf{x}^k)\to+\infty$. Hence $g_j(\mathbf{x}^k)$ is bounded for every $j\in\mathcal{J}_+$, even when $\|\mathbf{x}^k\|\to\infty$.}

    \rev{\emph{Stage 2 ($\mathbf{x}$ cannot escape).} Suppose $\|\mathbf{x}^k\|\to\infty$ on $\{V\le c\}$. The equality block $\mathbf{h}=\mathbf{A}\mathbf{x}-\mathbf{b}$ bounded and full row rank of $\mathbf{A}$ bound the component of $\mathbf{x}^k$ in $\operatorname{range}(\mathbf{A}^\top)$, so the component $\mathbf{x}_\perp^k$ in $\ker(\mathbf{A})$ diverges; write $\mathbf{x}^k=\mathbf{x}_\parallel^k+\mathbf{x}_\perp^k$ with $\mathbf{x}_\parallel^k$ bounded. If the multipliers $\widetilde{\boldsymbol{\lambda}}^k$ are bounded, testing $\mathbf{s}_1^k$ against $\mathbf{x}_\perp^k\in\ker(\mathbf{A})$ (which annihilates $\mathbf{A}^\top\boldsymbol{\mu}^k$) gives, by strong convexity along $\ker(\mathbf{A})$ and convexity of each $g_i$,
    \begin{align}
    \langle\mathbf{s}_1^k,\mathbf{x}_\perp^k\rangle&\ge m\|\mathbf{x}_\perp^k\|^2+\textstyle\sum_i\widetilde{\lambda}_i^k\big(g_i(\mathbf{x}^k)-g_i(\mathbf{x}_\parallel^k)\big) \notag \\
    &\ge m\|\mathbf{x}_\perp^k\|^2-C\|\mathbf{x}_\perp^k\|,
    \end{align}
    using $g_i(\mathbf{x}^k)-g_i(\mathbf{x}_\parallel^k)\ge\langle\nabla g_i(\mathbf{x}_\parallel^k),\mathbf{x}_\perp^k\rangle\ge-\|\nabla g_i(\mathbf{x}_\parallel^k)\|\,\|\mathbf{x}_\perp^k\|$ with $\mathbf{x}_\parallel^k$ bounded. Since $\langle\mathbf{s}_1^k,\mathbf{x}_\perp^k\rangle\le\sqrt{2c}\,\|\mathbf{x}_\perp^k\|$, this forces $\|\mathbf{x}_\perp^k\|\le(\sqrt{2c}+C)/m$, a contradiction. Hence bounded multipliers preclude $\mathbf{x}$-escape.}

    \rev{It remains to exclude simultaneous divergence of $\mathbf{x}^k$ and the multipliers $\widetilde{\lambda}_j^k$ ($j\in\mathcal{J}_+$). By Stage~1b, $g_j(\mathbf{x}^k)$ is bounded for $j\in\mathcal{J}_+$; passing to a subsequence, $\hat{\mathbf{x}}\coloneqq\lim\mathbf{x}^k/\|\mathbf{x}^k\|\in\ker(\mathbf{A})$ satisfies $g_j^\infty(\hat{\mathbf{x}})\le 0$ for $j\in\mathcal{J}_+$, where $g_j^\infty$ is the recession function of the convex $g_j$. Thus $\hat{\mathbf{x}}$ is a nonzero recession direction of the feasible set $\{\mathbf{A}\mathbf{x}=\mathbf{b},\,\mathbf{g}(\mathbf{x})\le\mathbf{0}\}$. By hypothesis this set is bounded, so it has no nonzero recession direction; the simultaneous-divergence mode is therefore empty.}

    \rev{\emph{Stage 3 ($\boldsymbol{\lambda}$ cannot escape with $\mathbf{x}$ bounded).} With $\mathbf{x}$ bounded by Stage~2, suppose, for contradiction, a sequence $\mathbf{z}^k\in\Omega_c$ (so $V(\mathbf{z}^k)\le c<c^\ast$) has $\|\boldsymbol{\lambda}^k\|\to\infty$. Since $V\le c$ bounds every block of $\mathbf{S}$, we have $\|\mathbf{s}_1^k\|\le\sqrt{2c}$ and $|\phi_\varepsilon(\lambda_j^k,-g_j(\mathbf{x}^k))|\le\sqrt{2c}$ for all $j,k$; $\|\mathbf{A}\mathbf{x}^k-\mathbf{b}\|\le\sqrt{2c}$; pass to a subsequence with $\mathbf{x}^k\to\bar{\mathbf{x}}$ (bounded by Stage~2). The multipliers $\boldsymbol{\mu}^k$ are not assumed bounded here.}

    \rev{\emph{(C1) Sign screen.} If $\lambda_j^k\to-\infty$ then $\phi_\varepsilon(\lambda_j^k,-g_j(\mathbf{x}^k))=\sqrt{(\lambda_j^k)^2+g_j^2+\varepsilon^2}-\lambda_j^k+g_j=-2\lambda_j^k+O(1)\to+\infty$, contradicting boundedness of the FB block. Hence on $\Omega_c$ every $\lambda_j$ is bounded below, and $\|\boldsymbol{\lambda}^k\|\to\infty$ forces the set $\mathcal{J}_+\coloneqq\{j:\lambda_j^k\to+\infty\}$ to be nonempty.}

    \rev{\emph{(C2) FB limit.} For $j\in\mathcal{J}_+$, expanding~\eqref{eq:FB_smooth_gen} as $\lambda_j^k\to+\infty$ gives $\phi_\varepsilon(\lambda_j^k,-g_j(\mathbf{x}^k))\to g_j(\bar{\mathbf{x}})$.}

    \rev{\emph{(C3) Gradient cancellation modulo $\operatorname{range}(\mathbf{A}^\top)$.} Let $\mathbf{P}$ be the orthogonal projection onto $\ker(\mathbf{A})$, so $\mathbf{P}\mathbf{A}^\top=\mathbf{0}$. Applying $\mathbf{P}$ to the first block removes the $\mathbf{A}^\top\boldsymbol{\mu}^k$ term irrespective of $\|\boldsymbol{\mu}^k\|$; boundedness of $\mathbf{P}\mathbf{s}_1^k$, $\nabla J(\mathbf{x}^k)$, and the terms $\nabla g_i(\mathbf{x}^k)\widetilde{\lambda}_i^k$ for $i\notin\mathcal{J}_+$ (whose multipliers stay bounded) then forces $\mathbf{P}\mathbf{D}^k$ to be bounded, where $\mathbf{D}^k\coloneqq\sum_{i\in\mathcal{J}_+}\widetilde{\lambda}_i^k\nabla g_i(\mathbf{x}^k)$. With $t^k\coloneqq\sum_{i\in\mathcal{J}_+}\widetilde{\lambda}_i^k\to+\infty$ and weights $w_i^k\coloneqq\widetilde{\lambda}_i^k/t^k$ (a probability vector on $\mathcal{J}_+$), dividing by $t^k$ and passing to a further subsequence $w_i^k\to w_i\ge 0$, $\sum_{i\in\mathcal{J}_+}w_i=1$, yields $\mathbf{P}\sum_{i\in\mathcal{J}_+}w_i\nabla g_i(\bar{\mathbf{x}})=\mathbf{0}$, i.e.\ $\sum_{i\in\mathcal{J}_+}w_i\nabla g_i(\bar{\mathbf{x}})=\mathbf{A}^\top\boldsymbol{\nu}$ for some $\boldsymbol{\nu}$. Full row rank of $\mathbf{A}$ and continuity of the $\nabla g_i$ on the bounded $\mathbf{x}$-range of Stage~2 bound $\|\boldsymbol{\nu}\|\le\ell<\infty$.}

    \rev{\emph{(C4) Slater lower bound on the equality slice.} The map $G\coloneqq\sum_{i\in\mathcal{J}_+}w_i g_i$ is convex with $\nabla G(\bar{\mathbf{x}})=\sum_{i\in\mathcal{J}_+}w_i\nabla g_i(\bar{\mathbf{x}})=\mathbf{A}^\top\boldsymbol{\nu}$ by (C3). Let $\bar{\mathbf{x}}_s$ be a deepest Slater point, $\mathbf{A}\bar{\mathbf{x}}_s=\mathbf{b}$ and $\max_j g_j(\bar{\mathbf{x}}_s)=-d$, with $d\coloneqq-\min_{\mathbf{A}\mathbf{x}=\mathbf{b}}\max_j g_j(\mathbf{x})>0$ the feasibility margin. Convexity of $G$ and $\mathbf{A}\bar{\mathbf{x}}_s=\mathbf{b}$ give
    \begin{align*}
    &G(\bar{\mathbf{x}}_s)-G(\bar{\mathbf{x}})\ge\langle\nabla G(\bar{\mathbf{x}}),\bar{\mathbf{x}}_s-\bar{\mathbf{x}}\rangle=\langle\boldsymbol{\nu},\mathbf{A}(\bar{\mathbf{x}}_s-\bar{\mathbf{x}})\rangle=-\langle\boldsymbol{\nu},\bar{\mathbf{r}}\rangle,
    \end{align*}
    where $\bar{\mathbf{r}}\coloneqq\mathbf{A}\bar{\mathbf{x}}-\mathbf{b}$.
    Since $G(\bar{\mathbf{x}}_s)\le\max_{i\in\mathcal{J}_+}g_i(\bar{\mathbf{x}}_s)\le -d$, writing $\rho\coloneqq\|\bar{\mathbf{r}}\|$ and using $\|\boldsymbol{\nu}\|\le\ell$,
    \begin{equation*}
    G(\bar{\mathbf{x}})\le -d+\langle\boldsymbol{\nu},\bar{\mathbf{r}}\rangle\le -d+\ell\rho.
    \end{equation*}
    \emph{(C5) Positive floor.} By (C2) the inequality block satisfies $\sum_{i\in\mathcal{J}_+}\phi_\varepsilon^2\to\sum_{i\in\mathcal{J}_+}g_i(\bar{\mathbf{x}})^2$, and Cauchy--Schwarz with the weights $w$ gives $\sum_{i\in\mathcal{J}_+}g_i(\bar{\mathbf{x}})^2\ge G(\bar{\mathbf{x}})^2$. Retaining also the equality block $\mathbf{s}_3=\mathbf{A}\mathbf{x}-\mathbf{b}$ with $\|\bar{\mathbf{r}}\|=\rho$,
    \begin{align*}
    \liminf_k V(\mathbf{z}^k)&\ge\tfrac{1}{2}\big(G(\bar{\mathbf{x}})^2+\rho^2\big)\ge\tfrac{1}{2}\big((d-\ell\rho)_+^2+\rho^2\big)\\
    &\ge\tfrac{1}{2}\min_{\rho\ge 0}\big[(d-\ell\rho)^2+\rho^2\big]=\frac{d^2}{2(1+\ell^2)}>0.
    \end{align*}
    Hence there exists $c^\ast>0$ (any value in $(0,\,d^2/2(1+\ell^2)]$) such that no $\boldsymbol{\lambda}$-escape sequence satisfies $V\le c<c^\ast$; on $\Omega_c$ with $c<c^\ast$, $\|\boldsymbol{\lambda}\|$ cannot escape to infinity.}

    \rev{Combining Stages~1--3, no escape sequence exists on $\{V\le c\}$, so $\Omega_c$ is bounded. Closedness plus boundedness in $\mathbb{R}^{n+p+q}$ yields compactness.}
    \end{proof}

    \subsection{Proofs of Proposition~\ref{prop:affine_global} and Corollary~\ref{cor:auto_nonsing_general}}
    \label{app:coro_proof}

    \begin{proof}
    \rev{\emph{Uniqueness.} Under strong convexity of $J$ and Slater's condition, $\mathbf{x}^\star$ is the unique global minimizer of~\eqref{eq:constrained_problem_general}~\cite[Thm.~11.12]{beck_introduction_2014}; LICQ further yields uniqueness of the associated multipliers $(\boldsymbol{\lambda}^\star,\boldsymbol{\mu}^\star)$. Hence the KKT triple $(\mathbf{x}^\star,\boldsymbol{\lambda}^\star,\boldsymbol{\mu}^\star)$ is unique.}
    
    \rev{\emph{Nonsingularity of $\nabla\mathbf{S}$ via block elimination.}}

    \rev{Step 1: FB sign convention.
    For each $i$ and every $\varepsilon>0$, the partial derivatives of $\phi_\varepsilon$ at
    $(a,b)=(\lambda_i,-g_i(\mathbf{x}))$,
    \begin{equation*}
    \alpha_i=\frac{\lambda_i}{\sqrt{\lambda_i^2+g_i^2+\varepsilon^2}}-1,\qquad
    \beta_i=\frac{-g_i}{\sqrt{\lambda_i^2+g_i^2+\varepsilon^2}}-1,
    \end{equation*}
    lie strictly in $(-2,0)$. Hence $\mathbf{D}_a=\operatorname{diag}(\alpha_i)\prec\mathbf{0}$ and
    $\mathbf{D}_b=\operatorname{diag}(\beta_i)\prec\mathbf{0}$ are strictly negative diagonal, and both
    are invertible.}

    \rev{Step 2: Jacobian block structure.
    Differentiating~\eqref{eq:S_FB} blockwise gives
    \begin{equation*}
    \nabla\mathbf{S}(\mathbf{z})=\begin{bmatrix}
    \mathbf{H}_{\mathcal{L}}(\mathbf{x},\boldsymbol{\lambda}) & \nabla\mathbf{g}(\mathbf{x})^\top\mathbf{D}_{\widetilde{\lambda}'} & \mathbf{A}^\top\\[2pt]
    -\mathbf{D}_b\,\nabla\mathbf{g}(\mathbf{x}) & \mathbf{D}_a & \mathbf{0}\\[2pt]
    \mathbf{A} & \mathbf{0} & \mathbf{0}
    \end{bmatrix},
    \end{equation*}
    where $\mathbf{D}_{\widetilde{\lambda}'}=\operatorname{diag}(\widetilde{\lambda}_i')$ with
    $\widetilde{\lambda}_i'\in(0,1)$ from~\eqref{eq:lambda_tilde}, and
    $\mathbf{H}_{\mathcal{L}}(\mathbf{x},\boldsymbol{\lambda})=\nabla^2 J(\mathbf{x})+\sum_{i=1}^{p}\widetilde{\lambda}_i\nabla^2 g_i(\mathbf{x})$.
    Strong convexity gives $\nabla^2 J\succeq m\mathbf{I}$; convexity of each $g_i$ gives
    $\nabla^2 g_i\succeq\mathbf{0}$, and $\widetilde{\lambda}_i>0$ by construction~\eqref{eq:lambda_tilde},
    so $\sum_i\widetilde{\lambda}_i\nabla^2 g_i\succeq\mathbf{0}$ unconditionally. Hence
    \begin{equation}
    v_1^{\!\top}\mathbf{H}_{\mathcal{L}}(\mathbf{x},\boldsymbol{\lambda})\,v_1\;\ge\;m\,\|v_1\|^2,
    \qquad\forall v_1\in\mathbb{R}^{n},
    \label{eq:HL_coercive_con}
    \end{equation}
    with no sign assumption on $\boldsymbol{\lambda}$.}

    \rev{Step 3: Block elimination.
    Let $v=\operatorname{col}(v_1,v_2,v_3)\in\mathbb{R}^{n}\times\mathbb{R}^{p}\times\mathbb{R}^{q}$
    satisfy $\nabla\mathbf{S}(\mathbf{z})\,v=\mathbf{0}$. The three block equations are
    \begin{subequations}
    \begin{align}
    \mathbf{H}_{\mathcal{L}}\,v_1+\nabla\mathbf{g}^\top\mathbf{D}_{\widetilde{\lambda}'} v_2+\mathbf{A}^\top v_3 &=\mathbf{0},\label{eq:cbk1}\\
    -\mathbf{D}_b\,\nabla\mathbf{g}\,v_1+\mathbf{D}_a\,v_2 &=\mathbf{0},\label{eq:cbk2}\\
    \mathbf{A}\,v_1 &=\mathbf{0}.\label{eq:cbk3}
    \end{align}
    \end{subequations}
    Equation~\eqref{eq:cbk2}, using invertibility of $\mathbf{D}_a$, yields
    $v_2=\mathbf{D}_a^{-1}\mathbf{D}_b\,\nabla\mathbf{g}\,v_1$. Substituting into~\eqref{eq:cbk1} and
    taking the inner product with $v_1$, while using~\eqref{eq:cbk3} to eliminate the
    $\mathbf{A}^\top v_3$ term, gives
    \begin{equation}
    v_1^{\!\top}\Bigl(\mathbf{H}_{\mathcal{L}}+\nabla\mathbf{g}^\top\mathbf{D}_{\widetilde{\lambda}'}\mathbf{D}_a^{-1}\mathbf{D}_b\,\nabla\mathbf{g}\Bigr)v_1=0.
    \label{eq:cHhat}
    \end{equation}}

    \rev{Step 4: Positive-semidefinite argument.
    Since $\mathbf{D}_a,\mathbf{D}_b\prec\mathbf{0}$ are strictly negative diagonal,
    $\mathbf{D}_a^{-1}\mathbf{D}_b$ is strictly positive diagonal; with
    $\mathbf{D}_{\widetilde{\lambda}'}\succ\mathbf{0}$ this gives
    $\mathbf{D}_{\widetilde{\lambda}'}\mathbf{D}_a^{-1}\mathbf{D}_b\succ\mathbf{0}$, hence
    $\nabla\mathbf{g}^\top\mathbf{D}_{\widetilde{\lambda}'}\mathbf{D}_a^{-1}\mathbf{D}_b\,\nabla\mathbf{g}\succeq\mathbf{0}$.
    Combined with~\eqref{eq:HL_coercive_con},~\eqref{eq:cHhat} yields
    \begin{equation*}
    0=v_1^{\!\top}\Bigl(\mathbf{H}_{\mathcal{L}}+\nabla\mathbf{g}^\top\mathbf{D}_{\widetilde{\lambda}'}\mathbf{D}_a^{-1}\mathbf{D}_b\,\nabla\mathbf{g}\Bigr)v_1
    \ge v_1^{\!\top}\mathbf{H}_{\mathcal{L}}v_1\ge m\|v_1\|^2.
    \end{equation*}
    Since $m>0$, this forces $v_1=\mathbf{0}$.}

    \rev{Step 5: Back-substitution.
    With $v_1=\mathbf{0}$, equation~\eqref{eq:cbk2} gives $\mathbf{D}_a v_2=\mathbf{0}$, hence
    $v_2=\mathbf{0}$ by invertibility of $\mathbf{D}_a$; equation~\eqref{eq:cbk1} then reduces to
    $\mathbf{A}^\top v_3=\mathbf{0}$, hence $v_3=\mathbf{0}$ by full row rank of $\mathbf{A}$. Therefore
    the only solution of $\nabla\mathbf{S}(\mathbf{z})\,v=\mathbf{0}$ is $v=\mathbf{0}$, so
    $\nabla\mathbf{S}(\mathbf{z})$ is nonsingular at every $\mathbf{z}\in\mathbb{R}^{n+p+q}$. In particular
    $\nabla V(\mathbf{z})=\nabla\mathbf{S}(\mathbf{z})^\top\mathbf{S}(\mathbf{z})\neq\mathbf{0}$ whenever
    $\mathbf{S}(\mathbf{z})\neq\mathbf{0}$, so Assumption~\ref{ass:OLF_reg} holds automatically with
    $\mathcal{U}=\mathbb{R}^{n+p+q}$.}

    \rev{\emph{Pointwise nonsingularity versus a uniform bound.} The argument shows
    $\sigma_{\min}(\nabla\mathbf{S}(\mathbf{z}))>0$ at every finite $\mathbf{z}$, but does not provide a
    uniform bound $\inf_{\mathbf{z}}\sigma_{\min}(\nabla\mathbf{S}(\mathbf{z}))>0$. As any
    $\lambda_j\to+\infty$ the derivative $\alpha_j\to 0$, so $\mathbf{D}_a$ approaches singularity and
    $\sigma_{\min}(\nabla\mathbf{S}(\mathbf{z}))\to 0$ as $\|\mathbf{z}\|\to\infty$. The two are not in
    conflict: nonsingularity is a pointwise property at each finite state, whereas the absence of a
    uniform bound concerns the limit at infinity. This is why the constant in
    Lemma~\ref{lem:lin_PL} is taken as $\sigma_c=\min_{\mathbf{z}\in\Omega_c}\sigma_{\min}(\nabla\mathbf{S}(\mathbf{z}))>0$
    over the compact $\Omega_c$ rather than over $\mathbb{R}^{n+p+q}$: nonsingularity supplies the PL
    constant on each $\Omega_c$, while compactness of $\Omega_c$ is the separate property secured by
    the threshold $c^\ast$ of Prop.~\ref{prop:c_star}.}
    
    \rev{It remains to verify that the semiglobal hypotheses of Thm.~\ref{thm:unified_constrained} are met. By Steps~1--5 above, $\nabla\mathbf{S}(\mathbf{z})$ is nonsingular at every $\mathbf{z}\in\mathbb{R}^{n+p+q}$, so Assumption~\ref{ass:OLF_reg} holds on $\mathbb{R}^{n+p+q}$. Fix any $c\in(0,c^\ast)$ with $c^\ast>0$ the threshold of Prop.~\ref{prop:c_star}, which yields compactness of $\Omega_c$, and Lemma~\ref{lem:lin_PL} then establishes Assumption~\ref{ass:lojasiewicz} on $\Omega_c$ with $\alpha=1/2$ and constant $c_{\mathrm{L}}=\sigma_c\sqrt{2}$ where $\sigma_c=\min_{\mathbf{z}\in\Omega_c}\sigma_{\min}(\nabla\mathbf{S}(\mathbf{z}))>0$. Hence Lemma~\ref{lem:fwd_inv} applies for every $c\in(0,c^\ast)$, yielding forward invariance of $\Omega_c$; for $\mathbf{z}(0)$ with $V(\mathbf{z}(0))<c^\ast$, the trajectory remains in $\Omega_c\subseteq\mathcal{U}=\mathbb{R}^{n+p+q}$ for all $t\ge 0$, so $V(\mathbf{z}(t))\to 0$ and hence $\mathbf{S}(\mathbf{z}(t))\to\mathbf{0}$ with the chosen timing law. Since $\mathbf{z}(t)$ remains in the compact set $\Omega_c$, its $\omega$-limit set $\Omega$ is nonempty, compact, connected, and invariant. From $\dot V=-\sigma(V,t)\le 0$ and $V(\mathbf{z}(t))\to 0$, every point of $\Omega$ is a zero of $\mathbf{S}_\varepsilon$. By Steps~1--5, $\nabla\mathbf{S}_\varepsilon$ is nonsingular, so the zeros of $\mathbf{S}_\varepsilon$ are isolated; a connected subset of an isolated set is a singleton. Hence $\Omega=\{\mathbf{z}^\star_\varepsilon\}$ and $\mathbf{z}(t)\to\mathbf{z}^\star_\varepsilon$, the zero of $\mathbf{S}_\varepsilon$. By Lemma~\ref{lem:eps_displacement}, $\|\mathbf{z}^\star_\varepsilon-(\mathbf{x}^\star,\boldsymbol{\lambda}^\star,\boldsymbol{\mu}^\star)\|=O(\varepsilon)$. }
    
    \rev{\emph{Affine specialization: uniform Jacobian regularity and global convergence.} When $\mathbf{g}(\mathbf{x})=\mathbf{C}\mathbf{x}-\mathbf{d}$ is affine with $\mathbf{C}\in\mathbb{R}^{p\times n}$ of full row rank, the pointwise-vs-uniform tradeoff above is broken in a structural way. Two effects combine. First, $\nabla^2 g_i\equiv\mathbf{0}$ for each $i$, so the Lagrangian-Hessian block collapses to $\mathbf{H}_{\mathcal{L}}=\nabla^2 J\succeq m\mathbf{I}$ uniformly on $\mathbb{R}^{n+p+q}$, with no multiplier-dependent contribution. Second, we show $\inf_{\mathbf{z}}\sigma_{\min}(\nabla\mathbf{S}(\mathbf{z}))\ge\bar\sigma>0$ when $\big[\begin{smallmatrix}\mathbf{A}\\ \mathbf{C}\end{smallmatrix}\big]$ has full row rank. Suppose not: take $\mathbf{z}^k$ and unit $\mathbf{v}^k=\operatorname{col}(\mathbf{v}_1^k,\mathbf{v}_2^k,\mathbf{v}_3^k)$ with $\nabla\mathbf{S}(\mathbf{z}^k)\mathbf{v}^k\to\mathbf{0}$. The diagonals $\alpha_j,\beta_j\in(-2,0)$ and $\widetilde{\lambda}_j'\in(0,1)$ lie in compact intervals; pass to a subsequence with $\alpha_j^k\to\alpha_j^\star$, $\beta_j^k\to\beta_j^\star$, $\widetilde{\lambda}_j'^k\to\ell_j$, and $\mathbf{v}^k\to\mathbf{v}$. Set $\mathcal{J}_0=\{j:\alpha_j^\star=0\}$. The third block gives $\mathbf{A}\mathbf{v}_1=\mathbf{0}$. Taking the inner product of the first block with $\mathbf{v}_1^k$ (so $\langle\mathbf{A}^\top\mathbf{v}_3^k,\mathbf{v}_1^k\rangle=\langle\mathbf{v}_3^k,\mathbf{A}\mathbf{v}_1^k\rangle\to 0$, and $\mathbf{H}_{\mathcal{L}}\succeq m\mathbf{I}$): for $j\notin\mathcal{J}_0$ the second block gives a contribution $\ell_j(\beta_j^\star/\alpha_j^\star)(\mathbf{c}_j^\top\mathbf{v}_1)^2\ge 0$ (a ratio of two negatives); for $j\in\mathcal{J}_0$, where $\beta_j^\star=-1$ and $\ell_j=1$, the second block forces $\mathbf{c}_j^\top\mathbf{v}_1=0$, so its term vanishes. Hence $0\ge m\|\mathbf{v}_1\|^2$, so $\mathbf{v}_1=\mathbf{0}$. Then $\mathbf{v}_{2,j}=0$ for $j\notin\mathcal{J}_0$, and the first block reduces to $\sum_{j\in\mathcal{J}_0}\mathbf{v}_{2,j}\,\mathbf{c}_j+\mathbf{A}^\top\mathbf{v}_3=\mathbf{0}$. Full row rank of $\big[\begin{smallmatrix}\mathbf{A}\\ \mathbf{C}\end{smallmatrix}\big]$ makes the rows $\{\mathbf{c}_j\}_{j\in\mathcal{J}_0}$ together with the rows of $\mathbf{A}$ linearly independent, so $\mathbf{v}_{2,j}=0$ and $\mathbf{v}_3=\mathbf{0}$; thus $\mathbf{v}=\mathbf{0}$, contradicting $\|\mathbf{v}\|=1$. At most one of $\alpha_j,\beta_j$ tends to $0$ since $\alpha_j+\beta_j\le\sqrt{2}-2<0$, and $\beta_j\to 0$ leaves $\alpha_j$ bounded away from $0$, so $\mathcal{J}_0$ is the only degenerate set. Hence $\sigma_{\min}(\nabla\mathbf{S})\ge\bar\sigma>0$ uniformly on $\mathbb{R}^{n+p+q}$. The condition $\big[\begin{smallmatrix}\mathbf{A}\\ \mathbf{C}\end{smallmatrix}\big]$ full row rank is also necessary: if some $\mathbf{c}_j\in\operatorname{range}(\mathbf{A}^\top)$, the limit vector $\mathbf{v}=(\mathbf{0},\mathbf{v}_2,\mathbf{v}_3)$ above is nonzero, and $\sigma_{\min}(\nabla\mathbf{S})\to 0$ along the corresponding ray. Lemma~\ref{lem:lin_PL} then applies globally with $\sigma_c$ replaced by $\bar\sigma$, yielding $\|\nabla V(\mathbf{z})\|^2\ge 2\bar\sigma^2 V(\mathbf{z})$ for all $\mathbf{z}\in\mathbb{R}^{n+p+q}$. Combined with $\dot V=-\sigma(V,t)$ along the closed-loop dynamics and forward invariance of every sublevel set, this gives global convergence $\mathbf{z}(t)\to\mathbf{z}^\star_\varepsilon$ for every initial condition, irrespective of $c^\ast$, with $\|\mathbf{z}^\star_\varepsilon-(\mathbf{x}^\star,\boldsymbol{\lambda}^\star,\boldsymbol{\mu}^\star)\|=O(\varepsilon)$ by Lemma~\ref{lem:eps_displacement}.}
    \end{proof}

    \subsection{\rev{Equilibrium Displacement of the Smoothed Stationarity Vector}}
    \label{app:eps_displacement}

    \rev{\begin{lemma}[$O(\varepsilon)$ Equilibrium Displacement]
    \label{lem:eps_displacement}
    Let $\mathbf{z}^\star=(\mathbf{x}^\star,\boldsymbol{\lambda}^\star,\boldsymbol{\mu}^\star)$ be a KKT point of~\eqref{eq:constrained_problem_general} satisfying the hypotheses of Corollary~\ref{cor:auto_nonsing_general}, and let $\mathbf{S}_\varepsilon$ denote the smoothed stationarity vector~\eqref{eq:S_FB} with parameter $\varepsilon$. Then for all sufficiently small $\varepsilon>0$ there is a unique zero $\mathbf{z}^\star_\varepsilon$ of $\mathbf{S}_\varepsilon$ near $\mathbf{z}^\star$, and $\|\mathbf{z}^\star_\varepsilon-\mathbf{z}^\star\|=O(\varepsilon)$.
    \end{lemma}}

    \rev{\begin{proof}
    Write $\mathbf{S}_\varepsilon(\mathbf{z})=\mathbf{S}_0(\mathbf{z})+\mathbf{r}_\varepsilon(\mathbf{z})$, where $\mathbf{S}_0$ is the exact (unsmoothed) KKT residual with $\mathbf{S}_0(\mathbf{z}^\star)=\mathbf{0}$. The perturbation enters two blocks: the FB block, with $|\phi_\varepsilon(a,b)-\phi(a,b)|\le\varepsilon$ from~\eqref{eq:FB_smooth_gen}, and the gradient block, through $\nabla\mathbf{g}^\top(\widetilde{\boldsymbol{\lambda}}-\boldsymbol{\lambda})$ with $\|\widetilde{\boldsymbol{\lambda}}-\boldsymbol{\lambda}\|\le\sqrt{p}\,\varepsilon/2$ at $\boldsymbol{\lambda}=\boldsymbol{\lambda}^\star\ge\mathbf{0}$ by the bound in~\eqref{eq:lambda_tilde}. Hence $\|\mathbf{r}_\varepsilon(\mathbf{z}^\star)\|\le C_0\varepsilon$ for a constant $C_0$ depending only on $\nabla\mathbf{g}(\mathbf{x}^\star)$ and $p$, so $\|\mathbf{S}_\varepsilon(\mathbf{z}^\star)\|=O(\varepsilon)$. By Corollary~\ref{cor:auto_nonsing_general}, $\nabla\mathbf{S}_\varepsilon$ is nonsingular at $\mathbf{z}^\star$ with $\sigma_{\min}(\nabla\mathbf{S}_\varepsilon(\mathbf{z}^\star))\ge\sigma_0>0$; the inverse function theorem then yields a local $C^1$ zero map and the first-order bound $\|\mathbf{z}^\star_\varepsilon-\mathbf{z}^\star\|\le\sigma_0^{-1}\|\mathbf{S}_\varepsilon(\mathbf{z}^\star)\|+o(\varepsilon)=O(\varepsilon)$.
    Moreover the zero is unique on $\Omega_c$: at any zero of $\mathbf{S}_\varepsilon$ one has $\boldsymbol{\lambda}\ge\mathbf{0}$, $\mathbf{g}\le\mathbf{0}$, $2\lambda_j(-g_j)=\varepsilon^2$, $\mathbf{A}\mathbf{x}=\mathbf{b}$, and $\nabla J+\nabla\mathbf{g}^\top\widetilde{\boldsymbol{\lambda}}+\mathbf{A}^\top\boldsymbol{\mu}=\mathbf{0}$; under strong convexity and Slater's condition this perturbed KKT system has a unique solution (the $\varepsilon=0$ case is~\cite[Cor.~1]{liao-mcpherson_regularized_2019}), so $\mathbf{z}^\star_\varepsilon$ is well defined.
    \end{proof}}

    \subsection{Proof of Theorem~\ref{thm:minimax_main}}
    \label{app:thm_minimax_proof}

    \begin{proof}
    \rev{The argument proceeds in three steps. (i) Existence of a saddle point. The VI form of~\eqref{eq:minimax} with monotone operator $\mathcal{F}(\mathbf{w})=(\nabla_\mathbf{x} J,-\nabla_\mathbf{y} J)$ admits a saddle point on the compact convex feasible set by~\cite[Cor.~2.2.5]{facchinei_finite-dimensional_2003} (monotonicity of $\mathcal{F}$ follows from convex-concavity of $J$, and the feasible set is compact convex under the hypotheses of Thm.~\ref{thm:minimax_main}); Slater's condition (Assumption~\ref{ass:Slater_general}) ensures existence of associated KKT multipliers $(\boldsymbol{\lambda}^\star,\boldsymbol{\mu}^\star)$, and LICQ (Assumption~\ref{ass:LICQ_general}) then yields their uniqueness~\cite[Thm.~11.12]{beck_introduction_2014}. The saddle-point set is contained in the compact feasible set, so $\mathcal{S}\cap\mathcal{U}$ is bounded. (ii) Forward invariance of $\Omega_c$. Under Assumptions~\ref{ass:OLF_reg} and~\ref{ass:lojasiewicz}, and given the boundedness of $\mathcal{S}\cap\mathcal{U}$ from step (i), Lemma~\ref{lem:fwd_inv} ensures the existence of $\rho^\ast>0$ such that $\Omega_c\subseteq\mathcal{U}$ is closed, bounded, and forward invariant under~\eqref{eq:HGD_minimax} for every $c\in(0,\rho^\ast]$; hence for $\mathbf{z}(0)\in\Omega_c$, the trajectory $\mathbf{z}(t)\in\Omega_c\subseteq\mathcal{U}$ for all $t\ge 0$, and Assumption~\ref{ass:OLF_reg} guarantees $\nabla\mathbf{S}(\mathbf{z}(t))^\top\mathbf{S}(\mathbf{z}(t))\neq\mathbf{0}$ whenever $\mathbf{S}(\mathbf{z}(t))\neq\mathbf{0}$, so the denominator in~\eqref{eq:HGD_minimax} is well-defined. (iii) Convergence with the prescribed rate. Substituting~\eqref{eq:HGD_minimax} into the chain rule for $V=\tfrac{1}{2}\|\mathbf{S}\|^2$ gives}
    \begin{equation}\label{eq:Vdot_minimax}
    \dot V(\mathbf{z}(t)) = -\,\sigma\!\big(V(\mathbf{z}(t)),t\big),
    \end{equation}
    \rev{so $V(\mathbf{z}(t))\to 0$ with the timing law selected via~\eqref{eq:sigma_def}; consequently $\mathbf{S}(\mathbf{z}(t))\to\mathbf{0}$, and the KKT conditions associated with the inequality constraints are enforced up to the smoothing tolerance $\varepsilon$. Since $\mathbf{z}(t)$ remains in the compact set $\Omega_c$, it has cluster points; by continuity of $\mathbf{S}$, every cluster point satisfies $\mathbf{S}=\mathbf{0}$ and is therefore a KKT point of~\eqref{eq:minimax} up to the smoothing tolerance $\varepsilon$.}
    \end{proof}

\subsection{Proof of Corollary~\ref{cor:scsc}}
\label{app:scsc_proof}

\rev{This appendix provides the complete block-elimination argument behind the nonsingularity step used in Corollary~\ref{cor:scsc}, together with the adaptation of the sublevel-set compactness argument to the convex-concave minimax setting. Strong convex-concavity of $J$ plays the role that strong monotonicity of $\mathcal{G}$ plays in the GNE proof (Appendix~\ref{app:nonsing_full}).}

\rev{Setting. Let $\mathbf{w}\coloneqq\operatorname{col}(\mathbf{x},\mathbf{y})\in\mathbb{R}^{n_x+n_y}$ and combine the two stationarity blocks of~\eqref{eq:S_minimax} into the operator
\begin{equation*}
\mathcal{F}(\mathbf{w})\coloneqq\operatorname{col}\!\bigl(\nabla_{\mathbf{x}} J(\mathbf{x},\mathbf{y}),\,-\nabla_{\mathbf{y}} J(\mathbf{x},\mathbf{y})\bigr),
\end{equation*}
with combined equality Jacobian $\widetilde{\mathbf{A}}\coloneqq[\mathbf{A}\ \mathbf{B}]\in\mathbb{R}^{q\times(n_x+n_y)}$ of full row rank by the standing structural assumption of Sec.~\ref{sec:minmax}, and inequality map $\widetilde{\mathbf{G}}(\mathbf{w})\coloneqq\mathbf{G}(\mathbf{x},\mathbf{y})$. Under $m_x$-strong convexity of $J$ in $\mathbf{x}$ and $m_y$-strong concavity in $\mathbf{y}$, the symmetric part of $\nabla\mathcal{F}$ is block diagonal,
\begin{equation*}
\tfrac{1}{2}\bigl(\nabla\mathcal{F}+\nabla\mathcal{F}^\top\bigr)
=\operatorname{diag}\!\bigl(\nabla^2_{\mathbf{xx}} J,\,-\nabla^2_{\mathbf{yy}} J\bigr)\succeq m\,\mathbf{I},
\end{equation*}
with $m\coloneqq\min(m_x,m_y)>0$. Hence $\mathcal{F}$ is $m$-strongly monotone on $\mathbb{R}^{n_x+n_y}$. Stacking $\mathbf{z}\coloneqq\operatorname{col}(\mathbf{w},\boldsymbol{\lambda},\boldsymbol{\mu})$, the stationarity vector~\eqref{eq:S_minimax} reads
\begin{equation*}
\mathbf{S}(\mathbf{z})=\operatorname{col}\!\bigl(\mathcal{F}(\mathbf{w})+\nabla\widetilde{\mathbf{G}}(\mathbf{w})^\top\widetilde{\boldsymbol{\lambda}}+\widetilde{\mathbf{A}}^\top\boldsymbol{\mu},\;\boldsymbol{\phi}_\varepsilon(\boldsymbol{\lambda},-\widetilde{\mathbf{G}}(\mathbf{w})),\;\widetilde{\mathbf{A}}\mathbf{w}-\mathbf{b}\bigr).
\end{equation*}}

\rev{\emph{Nonsingularity of $\nabla\mathbf{S}$ via block elimination.}}

\rev{Step 1: FB sign convention.
Since $\varepsilon>0$, the diagonal entries of
$\mathbf{D}_a\coloneqq\operatorname{diag}(\partial_a\phi_\varepsilon)$ and
$\mathbf{D}_b\coloneqq\operatorname{diag}(\partial_b\phi_\varepsilon)$
lie strictly in $(-2,0)$ for every $\mathbf{z}$ and every $\varepsilon>0$; hence $\mathbf{D}_a,\mathbf{D}_b\prec\mathbf{0}$ strictly, and both are invertible.}

\rev{Step 2: Jacobian block structure.
Differentiating $\mathbf{S}$ blockwise,
\begin{equation}
\nabla\mathbf{S}(\mathbf{z})=\begin{bmatrix}
\mathbf{H}_{\mathcal{L}}(\mathbf{w},\boldsymbol{\lambda}) & \nabla\widetilde{\mathbf{G}}(\mathbf{w})^\top\mathbf{D}_{\widetilde{\lambda}'} & \widetilde{\mathbf{A}}^\top\\[2pt]
-\mathbf{D}_b\,\nabla\widetilde{\mathbf{G}}(\mathbf{w}) & \mathbf{D}_a & \mathbf{0}\\[2pt]
\widetilde{\mathbf{A}} & \mathbf{0} & \mathbf{0}
\end{bmatrix},
\label{eq:minimax_blockJac}
\end{equation}
with generalized Lagrangian Hessian
$\mathbf{H}_{\mathcal{L}}(\mathbf{w},\boldsymbol{\lambda})\coloneqq\nabla\mathcal{F}(\mathbf{w})+\sum_{j=1}^{p}\widetilde{\lambda}_j\nabla^2\widetilde{G}_j(\mathbf{w})$,
where each $\widetilde{\lambda}_j$ is defined by~\eqref{eq:lambda_tilde}.
Convexity of each $\widetilde{G}_j$ and $\widetilde{\lambda}_j>0$ by construction give $\sum_j\widetilde{\lambda}_j\nabla^2\widetilde{G}_j\succeq\mathbf{0}$ unconditionally; together with the symmetric-part bound for $\nabla\mathcal{F}$ this yields
\begin{equation}
v_1^{\!\top}\mathbf{H}_{\mathcal{L}}(\mathbf{w},\boldsymbol{\lambda})\,v_1\ge m\,\|v_1\|^2,\qquad\forall v_1\in\mathbb{R}^{n_x+n_y}.
\label{eq:HL_coercive_mm}
\end{equation}}

\rev{Step 3: Block elimination.
Let $v=\operatorname{col}(v_1,v_2,v_3)\in\mathbb{R}^{n_x+n_y}\times\mathbb{R}^p\times\mathbb{R}^q$ satisfy $\nabla\mathbf{S}(\mathbf{z})\,v=\mathbf{0}$. The three block equations are
\begin{subequations}
\begin{align}
\mathbf{H}_{\mathcal{L}}v_1+\nabla\widetilde{\mathbf{G}}^\top\mathbf{D}_{\widetilde{\lambda}'} v_2+\widetilde{\mathbf{A}}^\top v_3 &=\mathbf{0},\label{eq:mmbk1}\\
-\mathbf{D}_b\,\nabla\widetilde{\mathbf{G}}\,v_1+\mathbf{D}_a\,v_2 &=\mathbf{0},\label{eq:mmbk2}\\
\widetilde{\mathbf{A}}\,v_1 &=\mathbf{0}.\label{eq:mmbk3}
\end{align}
\end{subequations}
Equation~\eqref{eq:mmbk3} gives $v_1\in\ker(\widetilde{\mathbf{A}})$. Equation~\eqref{eq:mmbk2}, using invertibility of $\mathbf{D}_a$, yields $v_2=\mathbf{D}_a^{-1}\mathbf{D}_b\,\nabla\widetilde{\mathbf{G}}\,v_1$. Substituting into~\eqref{eq:mmbk1} and taking the inner product with $v_1$ (which kills $\widetilde{\mathbf{A}}^\top v_3$ by~\eqref{eq:mmbk3}) gives
\begin{equation}
v_1^{\!\top}\Bigl(\mathbf{H}_{\mathcal{L}}+\nabla\widetilde{\mathbf{G}}^\top\mathbf{D}_{\widetilde{\lambda}'}\mathbf{D}_a^{-1}\mathbf{D}_b\,\nabla\widetilde{\mathbf{G}}\Bigr)v_1=0.
\label{eq:Hhat_mm}
\end{equation}}

\rev{Step 4: Positive-semidefinite argument.
Since $\mathbf{D}_a,\mathbf{D}_b\prec\mathbf{0}$ are both strictly negative diagonal, $\mathbf{D}_a^{-1}\mathbf{D}_b$ is strictly positive diagonal, and $\mathbf{D}_{\widetilde{\lambda}'}\succ\mathbf{0}$ (diagonal with entries $\widetilde{\lambda}_i'\in(0,1)$), so $\mathbf{D}_{\widetilde{\lambda}'}\mathbf{D}_a^{-1}\mathbf{D}_b\succ\mathbf{0}$ and $\nabla\widetilde{\mathbf{G}}^\top\mathbf{D}_{\widetilde{\lambda}'}\mathbf{D}_a^{-1}\mathbf{D}_b\,\nabla\widetilde{\mathbf{G}}\succeq\mathbf{0}$. Combined with~\eqref{eq:HL_coercive_mm},~\eqref{eq:Hhat_mm} becomes
\begin{equation*}
0=v_1^{\!\top}\mathbf{H}_{\mathcal{L}}v_1+v_1^{\!\top}\nabla\widetilde{\mathbf{G}}^\top\mathbf{D}_{\widetilde{\lambda}'}\mathbf{D}_a^{-1}\mathbf{D}_b\nabla\widetilde{\mathbf{G}}\,v_1\ge m\,\|v_1\|^2\ge 0.
\end{equation*}
The sum of two nonnegatives vanishes only if both vanish, forcing $v_1=\mathbf{0}$.}

\rev{Step 5: Back-substitution.
With $v_1=\mathbf{0}$: equation~\eqref{eq:mmbk2} gives $\mathbf{D}_a v_2=\mathbf{0}$, hence $v_2=\mathbf{0}$; equation~\eqref{eq:mmbk1} reduces to $\widetilde{\mathbf{A}}^\top v_3=\mathbf{0}$, hence $v_3=\mathbf{0}$ by full row rank of $\widetilde{\mathbf{A}}$. Therefore $\nabla\mathbf{S}(\mathbf{z})\,v=\mathbf{0}\Rightarrow v=\mathbf{0}$ and $\nabla\mathbf{S}(\mathbf{z})$ is nonsingular at every $\mathbf{z}$, and thus Assumption~\ref{ass:OLF_reg} holds automatically.}

\rev{\emph{Existence and uniqueness of the saddle point.} Strong convex-concavity of $J$ renders $\mathcal{F}$ $m$-strongly monotone, so the KKT operator associated with~\eqref{eq:minimax} is strongly monotone; under Slater's condition (Assumption~\ref{ass:Slater_general}), the associated variational inequality admits a unique solution $(\mathbf{x}^\star,\mathbf{y}^\star)$~\cite[Thm.~2.3.3(b)]{facchinei_finite-dimensional_2003}, and LICQ (Assumption~\ref{ass:LICQ_general}) yields uniqueness of the associated multipliers $(\boldsymbol{\lambda}^\star,\boldsymbol{\mu}^\star)$. No compactness assumption on the feasible set is required: the role played by compactness in Thm.~\ref{thm:minimax_main} is taken over here by strong monotonicity.}

\rev{\emph{Sublevel-set compactness adapted to the convex-concave minimax setting.} We adapt the three-case argument behind Prop.~\ref{prop:c_star} to the saddle-function structure of~\eqref{eq:minimax}, with stacked variable $\mathbf{z}=(\mathbf{x},\mathbf{y},\boldsymbol{\lambda},\boldsymbol{\mu})$ and $\mathbf{S}$ as in~\eqref{eq:S_minimax}. As in Prop.~\ref{prop:c_star}, we establish the existence of $c^\ast>0$ such that $\Omega_c$ is compact for every $c\in(0,c^\ast)$, by ruling out four escape modes on $\{V\le c\}$.}

\rev{\emph{Case A ($\|\mathbf{x}\|\to\infty$ with the other coordinates bounded).} Strong convexity of $J$ in $\mathbf{x}$ with constant $m_x$ gives $\nabla_{\mathbf{x}}J(\mathbf{x},\mathbf{y})^\top\mathbf{x}\ge m_x\|\mathbf{x}\|^2-\|\nabla_{\mathbf{x}}J(\mathbf{0},\mathbf{y})\|\|\mathbf{x}\|$ for each fixed $\mathbf{y}$, so $\|\nabla_{\mathbf{x}}J(\mathbf{x},\mathbf{y})\|\to\infty$ uniformly on bounded $\mathbf{y}$. The first block of $\mathbf{S}$ (cf.~\eqref{eq:S_minimax}) is $\mathbf{s}_1=\nabla_{\mathbf{x}}J+\mathbf{A}^\top\boldsymbol{\mu}+\nabla_{\mathbf{x}}\mathbf{G}^\top\widetilde{\boldsymbol{\lambda}}$, in which the multiplier-dependent terms are bounded under bounded $(\boldsymbol{\lambda},\boldsymbol{\mu})$; hence $\|\mathbf{s}_1\|\to\infty$ and $V\to\infty$, contradicting $V\le c$.}

\rev{\emph{Case A$'$ ($\|\mathbf{y}\|\to\infty$ with the other coordinates bounded).} Strong concavity of $J$ in $\mathbf{y}$ with constant $m_y$ gives $\nabla^2_{\mathbf{y}}J\preceq -m_y\mathbf{I}$, hence $-\nabla_{\mathbf{y}}J$ is coercive in $\mathbf{y}$ in the same sense as in Case~A. The second block of $\mathbf{S}$ is $\mathbf{s}_2=-\nabla_{\mathbf{y}}J+\mathbf{B}^\top\boldsymbol{\mu}+\nabla_{\mathbf{y}}\mathbf{G}^\top\widetilde{\boldsymbol{\lambda}}$, so $\|\mathbf{s}_2\|\to\infty$ as $\|\mathbf{y}\|\to\infty$ with bounded $(\boldsymbol{\lambda},\boldsymbol{\mu})$.}

\rev{\emph{Case B ($\|\boldsymbol{\mu}\|\to\infty$ with the other coordinates bounded).} By the standing full-row-rank assumption on $[\mathbf{A}\;\mathbf{B}]$ (Sec.~\ref{sec:minmax}), $\|\mathbf{A}^\top\boldsymbol{\mu}\|^2+\|\mathbf{B}^\top\boldsymbol{\mu}\|^2\ge\sigma_{\min}^2\!\big([\mathbf{A}\;\mathbf{B}]\big)\|\boldsymbol{\mu}\|^2\to\infty$, so at least one of $\|\mathbf{s}_1\|,\|\mathbf{s}_2\|$ diverges. This case uses only full row rank of $[\mathbf{A}\;\mathbf{B}]$ to bound the $\boldsymbol{\mu}$-escape; the stronger stacked condition $\big[\begin{smallmatrix}[\mathbf{A}\;\mathbf{B}]\\ \mathbf{C}\end{smallmatrix}\big]$ full row rank is invoked only in the affine specialization below, where it secures the uniform Jacobian bound.}

\rev{\emph{Case C ($\|\boldsymbol{\lambda}\|\to\infty$ with the other coordinates bounded).} The argument is identical to Case~C in the proof of Prop.~\ref{prop:c_star} (Appendix~\ref{app:c_star_proof}), with $\mathbf{g}$ replaced by $\mathbf{G}$ and $\mathbf{x}$ by $\mathbf{w}=(\mathbf{x},\mathbf{y})$. Along a sequence with $\|\boldsymbol{\lambda}^k\|\to\infty$, the sign screen gives a nonempty $\mathcal{J}_+=\{j:\lambda_j^k\to+\infty\}$; the FB limit gives $\phi_\varepsilon\to G_j(\bar{\mathbf{w}})$ for $j\in\mathcal{J}_+$; projecting the stacked gradient block onto $\ker(\widetilde{\mathbf{A}})$, $\widetilde{\mathbf{A}}=[\,\mathbf{A}\ \mathbf{B}\,]$, forces $\sum_{i\in\mathcal{J}_+}w_i\nabla G_i(\bar{\mathbf{w}})\in\operatorname{range}(\widetilde{\mathbf{A}}^\top)$ irrespective of $\|\boldsymbol{\mu}\|$; and Slater's condition on the equality slice together with Cauchy--Schwarz gives $\liminf_k V(\mathbf{z}^k)\ge d^2/2(1+\ell^2)>0$, where $d$ is the feasibility margin of~\eqref{eq:minimax} and $\ell<\infty$ the constant of (C3). Hence there exists $c^\ast>0$ for which $\|\boldsymbol{\lambda}\|$ cannot escape on $\Omega_c$, $c<c^\ast$.}

\rev{\emph{Joint escapes.} As in Appendix~\ref{app:c_star_proof}, the single-block cases do not by themselves exhaust simultaneous divergence. The reduction there carries over verbatim with $\mathbf{w}=(\mathbf{x},\mathbf{y})$ in place of $\mathbf{x}$ and $\widetilde{\mathbf{A}}=[\,\mathbf{A}\ \mathbf{B}\,]$ in place of $\mathbf{A}$: the equality block bounds the $\operatorname{range}(\widetilde{\mathbf{A}}^\top)$ component of $\mathbf{w}^k$; strong convex-concavity along $\ker(\widetilde{\mathbf{A}})$ precludes escape of $\mathbf{w}$ under bounded multipliers; and a simultaneous divergence of $\mathbf{w}^k$ and $\boldsymbol{\lambda}^k_{\mathcal{J}_+}$ would force a nonzero recession direction $\hat{\mathbf{w}}\in\ker(\widetilde{\mathbf{A}})$ with $G_j^\infty(\hat{\mathbf{w}})\le 0$ for $j\in\mathcal{J}_+$, excluded by boundedness of the feasible set.}

\rev{Combining Cases~A, A$'$, B, C with the joint-escape exclusion yields boundedness of $\Omega_c$; closedness is immediate from continuity of $V$. The block-elimination step above establishes nonsingularity of $\nabla\mathbf{S}(\mathbf{z})$ at every $\mathbf{z}\in\mathbb{R}^{n_x+n_y+p+q}$, hence in particular on $\Omega_c$. Lemma~\ref{lem:lin_PL} then establishes Assumption~\ref{ass:lojasiewicz} on $\Omega_c$ with $\alpha=1/2$, and Lemma~\ref{lem:fwd_inv} yields forward invariance of $\Omega_c$; in particular, for $\mathbf{z}(0)$ with $V(\mathbf{z}(0))<c^\ast$ the trajectory remains in $\Omega_c\subseteq\mathcal{U}=\mathbb{R}^{n_x+n_y+p+q}$ for all $t\ge 0$. The hypotheses of Thm.~\ref{thm:minimax_main} are therefore satisfied for any $\mathbf{z}(0)$ with $V(\mathbf{z}(0))<c^\ast$, yielding the semiglobal convergence claimed in Corollary~\ref{cor:scsc}.}

\rev{\emph{Affine specialization: uniform Jacobian regularity and global convergence.} When $\mathbf{G}$ is affine in $(\mathbf{x},\mathbf{y})$, $\mathbf{G}(\mathbf{x},\mathbf{y})=\mathbf{C}\,\mathbf{w}-\mathbf{d}$ with $\mathbf{w}=(\mathbf{x},\mathbf{y})$ and $\mathbf{C}\in\mathbb{R}^{p\times(n_x+n_y)}$, the argument parallels the constrained-optimization case in Appendix~\ref{app:coro_proof}, with the combined equality Jacobian $\widetilde{\mathbf{A}}=[\,\mathbf{A}\ \mathbf{B}\,]$ in place of $\mathbf{A}$. The block $\mathbf{H}_{\mathcal{L}}$ reduces to the symmetric part of $\nabla\mathcal{F}=\operatorname{diag}(\nabla^2_{\mathbf{xx}}J,-\nabla^2_{\mathbf{yy}}J)\succeq m\mathbf{I}$ uniformly. Running the uniform-$\sigma_{\min}$ limit argument of Appendix~\ref{app:coro_proof} verbatim, with $\widetilde{\mathbf{A}}\mathbf{v}_1=\mathbf{0}$ and $\mathbf{c}_j^\top\mathbf{v}_1=0$ for the degenerate set $\mathcal{J}_0$, the residual relation $\sum_{j\in\mathcal{J}_0}\mathbf{v}_{2,j}\,\mathbf{c}_j+\widetilde{\mathbf{A}}^\top\mathbf{v}_3=\mathbf{0}$ has only the trivial solution precisely when $\big[\begin{smallmatrix}\widetilde{\mathbf{A}}\\ \mathbf{C}\end{smallmatrix}\big]$ has full row rank. Hence under this hypothesis $\sigma_{\min}(\nabla\mathbf{S})\ge\bar\sigma>0$ uniformly on $\mathbb{R}^{n_x+n_y+p+q}$, the global PL inequality holds, and the closed-loop dynamics yield $\mathbf{z}(t)\to\mathbf{z}^\star_\varepsilon$ globally, irrespective of $c^\ast$, with $\|\mathbf{z}^\star_\varepsilon-(\mathbf{x}^\star,\mathbf{y}^\star,\boldsymbol{\lambda}^\star,\boldsymbol{\mu}^\star)\|=O(\varepsilon)$.}

\subsection{Proof of Theorem~\ref{thm:S1}}
\label{app:nonsing_full}

\rev{This appendix provides the complete block-elimination argument behind the nonsingularity step used in Thm.~\ref{thm:S1}, together with the adaptation of the sublevel-set compactness argument to the strongly monotone GNE setting. The block-elimination part draws on the FB Jacobian regularity results in~\cite{fischer_special_1992,liao-mcpherson_regularized_2019} and the KKT reformulation analysis in~\cite{dreves_solution_2011}.}

\rev{Setting. Let $\mathbf{z}=\operatorname{col}(\mathbf{x},\boldsymbol{\lambda},\boldsymbol{\mu})\in\mathbb{R}^{n+p+q}$ and consider the stationarity vector $\mathbf{S}(\mathbf{z})$ of~\eqref{eq:S_def} with the inequality block $\boldsymbol{\phi}_\varepsilon(\boldsymbol{\lambda},-\mathbf{g}(\mathbf{x}))$ from~\eqref{eq:rineq}. Assumption~\ref{ass:S1_strong_mono} provides $m>0$ such that $\mathcal{G}$ is strongly monotone with constant $m$, $\mathbf{A}$ has full row rank, and $\mathbf{g}$ is $C^1$ on the feasible set.}

\rev{\emph{Nonsingularity of $\nabla\mathbf{S}$ via block elimination.}}

\rev{Step 1: FB sign convention.
With $\boldsymbol{\phi}_\varepsilon(\boldsymbol{\lambda},-\mathbf{g}(\mathbf{x}))$, for each $j$ and every $\varepsilon>0$ the two partial derivatives
\begin{align*}
\partial_a\phi_\varepsilon(\lambda_j,-g_j)
&=\frac{\lambda_j}{\sqrt{\lambda_j^2+g_j^2+\varepsilon^2}}-1,\\
\partial_b\phi_\varepsilon(\lambda_j,-g_j)
&=\frac{-g_j}{\sqrt{\lambda_j^2+g_j^2+\varepsilon^2}}-1
\end{align*}
lie strictly in $(-2,0)$. Hence the diagonal matrices
$\mathbf{D}_a\coloneqq\operatorname{diag}(\partial_a\phi_\varepsilon)$ and
$\mathbf{D}_b\coloneqq\operatorname{diag}(\partial_b\phi_\varepsilon)$
satisfy $\mathbf{D}_a\prec\mathbf{0}$ and $\mathbf{D}_b\prec\mathbf{0}$ strictly,
and are therefore invertible for every $\mathbf{z}$.}

\rev{Step 2: Jacobian block structure.
Differentiating~\eqref{eq:S_def} blockwise, with the chain rule through the smoothed multiplier $\widetilde{\lambda}_j(\lambda_j)$ in the first block and the inner argument $-\mathbf{g}(\mathbf{x})$ in the third, gives
\begin{equation}
\nabla\mathbf{S}(\mathbf{z})=\begin{bmatrix}
\mathbf{H}_{\mathcal{L}}(\mathbf{x},\boldsymbol{\lambda}) & \nabla\mathbf{g}(\mathbf{x})^\top\mathbf{D}_{\widetilde{\lambda}'} & \mathbf{A}^\top\\[2pt]
\mathbf{A} & \mathbf{0} & \mathbf{0}\\[2pt]
-\mathbf{D}_b\,\nabla\mathbf{g}(\mathbf{x}) & \mathbf{D}_a & \mathbf{0}
\end{bmatrix},
\label{eq:full_blockJac}
\end{equation}
where $\mathbf{D}_{\widetilde{\lambda}'}=\operatorname{diag}(\widetilde{\lambda}_j')$ with $\widetilde{\lambda}_j'\in(0,1)$ from~\eqref{eq:lambda_tilde}, and $\mathbf{H}_{\mathcal{L}}(\mathbf{x},\boldsymbol{\lambda})\coloneqq\nabla\mathcal{G}(\mathbf{x})+\sum_{j=1}^{p}\widetilde{\lambda}_j\nabla^2 g_j(\mathbf{x})$ is the generalized Lagrangian operator. Here $\nabla\mathcal{G}(\mathbf{x})$ denotes the Jacobian of the pseudogradient $\mathcal{G}$ (not a Hessian, since $\mathcal{G}$ is generally not a gradient of a scalar); strong monotonicity of $\mathcal{G}$ in Assumption~\ref{ass:S1_strong_mono} is equivalent to $\tfrac12(\nabla\mathcal{G}+\nabla\mathcal{G}^\top)\succeq m\mathbf{I}$. Since each $g_j$ is convex ($\nabla^2 g_j\succeq\mathbf{0}$) and $\widetilde{\lambda}_j>0$ by construction~\eqref{eq:lambda_tilde}, the curvature contribution $\sum_j\widetilde{\lambda}_j\nabla^2 g_j\succeq\mathbf{0}$ holds unconditionally, so
\begin{equation}
v_1^{\!\top}\mathbf{H}_{\mathcal{L}}(\mathbf{x},\boldsymbol{\lambda})\,v_1\;\ge\;m\,\|v_1\|^2,\qquad\forall v_1\in\ker(\mathbf{A}).
\label{eq:HL_coercive}
\end{equation}
Equation~\eqref{eq:HL_coercive} uses only the restriction of strong monotonicity to $\ker(\mathbf{A})$; no sign assumption on $\boldsymbol{\lambda}$ is required.}

\rev{Step 3: Block elimination.
Let $v=\operatorname{col}(v_1,v_2,v_3)\in\mathbb{R}^{n}\times\mathbb{R}^{p}\times\mathbb{R}^{q}$ satisfy $\nabla\mathbf{S}(\mathbf{z})\,v=\mathbf{0}$. The three block equations read
\begin{subequations}
\begin{align}
\mathbf{H}_{\mathcal{L}}\,v_1+\nabla\mathbf{g}^\top\mathbf{D}_{\widetilde{\lambda}'} v_2+\mathbf{A}^\top v_3 &=\mathbf{0},\label{eq:bk1}\\
\mathbf{A}\,v_1 &=\mathbf{0},\label{eq:bk2}\\
-\mathbf{D}_b\,\nabla\mathbf{g}\,v_1+\mathbf{D}_a\,v_2 &=\mathbf{0}.\label{eq:bk3}
\end{align}
\end{subequations}
Equation~\eqref{eq:bk2} gives $v_1\in\ker(\mathbf{A})$. Equation~\eqref{eq:bk3}, using invertibility of $\mathbf{D}_a$, yields $v_2=\mathbf{D}_a^{-1}\mathbf{D}_b\,\nabla\mathbf{g}\,v_1$. Substituting into~\eqref{eq:bk1} and taking the inner product with $v_1$, while using~\eqref{eq:bk2} to eliminate the $\mathbf{A}^\top v_3$ term, gives
\begin{equation}
v_1^{\!\top}\!\underbrace{\Bigl(\mathbf{H}_{\mathcal{L}}+\nabla\mathbf{g}^\top\mathbf{D}_{\widetilde{\lambda}'}\mathbf{D}_a^{-1}\mathbf{D}_b\,\nabla\mathbf{g}\Bigr)}_{=:\,\widehat{\mathbf{H}}}\!v_1\;=\;0.
\label{eq:Hhat_equation}
\end{equation}}

\rev{Step 4: Positive-semidefinite argument.
Since $\mathbf{D}_a\prec\mathbf{0}$ is diagonal, $\mathbf{D}_a^{-1}\prec\mathbf{0}$ is also diagonal. Combined with $\mathbf{D}_b\prec\mathbf{0}$, the product $\mathbf{D}_a^{-1}\mathbf{D}_b$ is strictly positive diagonal. Since $\mathbf{D}_{\widetilde{\lambda}'}\succ\mathbf{0}$ (diagonal with entries $\widetilde{\lambda}_j'\in(0,1)$), the product $\mathbf{D}_{\widetilde{\lambda}'}\mathbf{D}_a^{-1}\mathbf{D}_b\succ\mathbf{0}$ is strictly positive diagonal. Therefore $\nabla\mathbf{g}^\top\mathbf{D}_{\widetilde{\lambda}'}\mathbf{D}_a^{-1}\mathbf{D}_b\,\nabla\mathbf{g}\succeq\mathbf{0}$, and~\eqref{eq:Hhat_equation} combined with~\eqref{eq:HL_coercive} restricted to $v_1\in\ker(\mathbf{A})$ yields
\begin{equation*}
0\;=\;v_1^{\!\top}\widehat{\mathbf{H}}\,v_1
\;\ge\;v_1^{\!\top}\mathbf{H}_{\mathcal{L}}\,v_1
\;\ge\;m\,\|v_1\|^2.
\end{equation*}
Since $m>0$, this forces $v_1=\mathbf{0}$.}

\rev{Step 5: Back-substitution.
With $v_1=\mathbf{0}$, equation~\eqref{eq:bk3} immediately gives $\mathbf{D}_a\,v_2=\mathbf{0}$; invertibility of $\mathbf{D}_a$ gives $v_2=\mathbf{0}$. Equation~\eqref{eq:bk1} then reduces to $\mathbf{A}^\top v_3=\mathbf{0}$; the full row rank of $\mathbf{A}$ implies $v_3=\mathbf{0}$. Hence the only solution of $\nabla\mathbf{S}(\mathbf{z})\,v=\mathbf{0}$ is $v=\mathbf{0}$, i.e., $\nabla\mathbf{S}(\mathbf{z})$ is nonsingular at every $\mathbf{z}$, and thus Assumption~\ref{ass:OLF_reg} holds automatically.}

\rev{\emph{Sublevel-set compactness adapted to the strongly monotone GNE setting.} We adapt the three-case argument behind Prop.~\ref{prop:c_star} to the GNE setting, with stacked variable $\mathbf{z}=(\mathbf{x},\boldsymbol{\lambda},\boldsymbol{\mu})$ and $\mathbf{S}$ as in~\eqref{eq:S_def}. As in Prop.~\ref{prop:c_star}, we establish the existence of $c^\ast>0$ such that $\Omega_c$ is compact for every $c\in(0,c^\ast)$, by ruling out three escape modes on $\{V\le c\}$.}

\rev{\emph{Case A ($\|\mathbf{x}\|\to\infty$ with the other coordinates bounded).} Strong monotonicity of $\mathcal{G}$ on $\ker(\mathbf{A})$ does not control growth along $\ker(\mathbf{A})^\perp$, so we decompose $\mathbf{x}=\mathbf{x}_{\perp}+\mathbf{x}_{\parallel}$ with $\mathbf{x}_{\perp}\in\ker(\mathbf{A})$ and $\mathbf{x}_{\parallel}\in\ker(\mathbf{A})^\perp=\operatorname{range}(\mathbf{A}^\top)$. \emph{Subcase A1 ($\|\mathbf{x}_{\parallel}\|\to\infty$):} writing $\mathbf{x}_{\parallel}=\mathbf{A}^\top\mathbf{w}$, full row rank of $\mathbf{A}$ gives $\|\mathbf{A}\mathbf{x}\|=\|\mathbf{A}\mathbf{x}_{\parallel}\|\ge\sqrt{\sigma_{\min}(\mathbf{A}\mathbf{A}^\top)}\,\|\mathbf{x}_{\parallel}\|$, so the equality block $\mathbf{r}_{\mathrm{eq}}(\mathbf{x})=\mathbf{A}\mathbf{x}-\mathbf{b}$ diverges and $V\to\infty$. \emph{Subcase A2 ($\|\mathbf{x}_{\perp}\|\to\infty$ with $\|\mathbf{x}_{\parallel}\|$ bounded):} setting the reference point $\mathbf{x}_0\coloneqq\mathbf{x}_{\parallel}$ (which remains bounded along this subcase), we have $\mathbf{x}-\mathbf{x}_0=\mathbf{x}_\perp\in\ker(\mathbf{A})$ by construction, so strong monotonicity restricted to $\ker(\mathbf{A})$ applies:
$\langle\mathcal{G}(\mathbf{x})-\mathcal{G}(\mathbf{x}_0),\,\mathbf{x}-\mathbf{x}_0\rangle\ge m\|\mathbf{x}_\perp\|^2\to\infty$,
hence $\|\mathcal{G}(\mathbf{x})\|\to\infty$ (using boundedness of $\mathcal{G}(\mathbf{x}_0)$). The first block of $\mathbf{S}$ is $\mathbf{s}_1=\mathcal{G}(\mathbf{x})+\nabla\mathbf{g}(\mathbf{x})^\top\widetilde{\boldsymbol{\lambda}}+\mathbf{A}^\top\boldsymbol{\mu}$, in which the multiplier-dependent terms are bounded under bounded $(\boldsymbol{\lambda},\boldsymbol{\mu})$; hence $\|\mathbf{s}_1\|\to\infty$ and $V\to\infty$.}

\rev{\emph{Case B ($\|\boldsymbol{\mu}\|\to\infty$ with the other coordinates bounded).} Full row rank of $\mathbf{A}$ gives $\|\mathbf{A}^\top\boldsymbol{\mu}\|\ge\sigma_{\min}(\mathbf{A})\|\boldsymbol{\mu}\|\to\infty$; under bounded $\mathbf{x}$ and bounded $\widetilde{\boldsymbol{\lambda}}$, the remaining terms of $\mathbf{s}_1$ are bounded, so $\|\mathbf{s}_1\|\to\infty$.}

\rev{\emph{Case C ($\|\boldsymbol{\lambda}\|\to\infty$ with the other coordinates bounded).} The argument is identical to Case~C in the proof of Prop.~\ref{prop:c_star} (Appendix~\ref{app:c_star_proof}). Along a sequence with $\|\boldsymbol{\lambda}^k\|\to\infty$, the sign screen gives a nonempty $\mathcal{J}_+=\{j:\lambda_j^k\to+\infty\}$; the FB limit gives $\phi_\varepsilon\to g_j(\bar{\mathbf{x}})$ for $j\in\mathcal{J}_+$; projecting the gradient block $\mathbf{s}_1$ onto $\ker(\mathbf{A})$ removes $\mathbf{A}^\top\boldsymbol{\mu}$ irrespective of $\|\boldsymbol{\mu}\|$ and forces $\sum_{i\in\mathcal{J}_+}w_i\nabla g_i(\bar{\mathbf{x}})\in\operatorname{range}(\mathbf{A}^\top)$ (the pseudogradient $\mathcal{G}(\mathbf{x})$ stays bounded under bounded $\mathbf{x}$); and Slater's condition on the equality slice together with Cauchy--Schwarz gives $\liminf_k V(\mathbf{z}^k)\ge d^2/2(1+\ell^2)>0$, where $d$ is the feasibility margin of the shared constraints and $\ell<\infty$ the constant of (C3). Hence there exists $c^\ast>0$ for which $\|\boldsymbol{\lambda}\|$ cannot escape on $\Omega_c$, $c<c^\ast$.}

\rev{\emph{Joint escapes.} As in Appendix~\ref{app:c_star_proof}, simultaneous divergence is excluded by the same reduction: the equality block bounds the $\operatorname{range}(\mathbf{A}^\top)$ component of $\mathbf{x}^k$; strong monotonicity on $\ker(\mathbf{A})$ precludes escape of $\mathbf{x}$ under bounded multipliers; and a joint divergence of $\mathbf{x}^k$ and $\boldsymbol{\lambda}^k_{\mathcal{J}_+}$ would force a nonzero recession direction $\hat{\mathbf{x}}\in\ker(\mathbf{A})$ with $g_j^\infty(\hat{\mathbf{x}})\le 0$ for $j\in\mathcal{J}_+$, excluded by boundedness of the shared feasible set.}

\rev{Combining Cases~A, B, C with the joint-escape exclusion yields boundedness of $\Omega_c$; closedness is immediate from continuity of $V$. The block-elimination argument above establishes nonsingularity of $\nabla\mathbf{S}(\mathbf{z})$ at every $\mathbf{z}\in\mathbb{R}^{n+p+q}$, hence in particular on $\Omega_c$. Lemma~\ref{lem:lin_PL} then establishes Assumption~\ref{ass:lojasiewicz} on $\Omega_c$ with $\alpha=1/2$, and Lemma~\ref{lem:fwd_inv} yields forward invariance of $\Omega_c$; in particular, for $\mathbf{z}(0)$ with $V(\mathbf{z}(0))<c^\ast$ the trajectory remains in $\Omega_c\subseteq\mathcal{U}=\mathbb{R}^{n+p+q}$ for all $t\ge 0$. This completes the verification of the hypotheses of Thm.~\ref{thm:S1} for any $\mathbf{z}(0)$ with $V(\mathbf{z}(0))<c^\ast$.}

\rev{\emph{Affine specialization: uniform Jacobian regularity and global convergence.} When $\mathbf{g}(\mathbf{x})=\mathbf{C}\mathbf{x}-\mathbf{d}$ is affine with $\mathbf{C}\in\mathbb{R}^{p\times n}$, the GNE analogue of the argument in Appendix~\ref{app:coro_proof} applies. Affineness eliminates the multiplier-weighted constraint Hessians, so the symmetric part of $\nabla\mathcal{G}+\sum_j\widetilde{\lambda}_j\nabla^2 g_j$ on $\ker(\mathbf{A})$ reduces to that of $\nabla\mathcal{G}$ alone, which is bounded below by $m\mathbf{I}$ on $\ker(\mathbf{A})$ by Assumption~\ref{ass:S1_strong_mono}. Running the uniform-$\sigma_{\min}$ limit argument of Appendix~\ref{app:coro_proof} (with $\mathbf{v}_1\in\ker(\mathbf{A})$, where the symmetric part is $\succeq m\mathbf{I}$), the residual relation $\sum_{j\in\mathcal{J}_0}\mathbf{v}_{2,j}\,\mathbf{c}_j+\mathbf{A}^\top\mathbf{v}_3=\mathbf{0}$ has only the trivial solution precisely when $\big[\begin{smallmatrix}\mathbf{A}\\ \mathbf{C}\end{smallmatrix}\big]$ has full row rank. Hence under this hypothesis $\sigma_{\min}(\nabla\mathbf{S})\ge\bar\sigma>0$ uniformly on $\mathbb{R}^{n+p+q}$, the global PL inequality holds, and the closed-loop dynamics yield $\mathbf{z}(t)\to\mathbf{z}^\star_\varepsilon$ globally, irrespective of $c^\ast$, with $\|\mathbf{z}^\star_\varepsilon-\mathbf{z}^\star\|=O(\varepsilon)$.}

\rev{Remarks. (i) The proof uses only strong monotonicity of $\mathcal{G}$ restricted to $\ker(\mathbf{A})$, convexity of $\mathbf{g}$, and full row rank of $\mathbf{A}$; in particular, no additional constraint qualification at the equilibrium is required. 
(ii) The argument is uniform in $\varepsilon>0$ because the strict bounds $\mathbf{D}_a,\mathbf{D}_b\prec\mathbf{0}$ hold for every $\varepsilon>0$ at every state. 
}

\subsection{Discrete-Time Implementation Notes}
\label{app:discrete_notes}

\rev{This appendix expands on the brief discrete-time discussion of Sec.~\ref{subsec:discrete_impl}, surveying recent directions for the discrete-time realization of continuous-time optimization dynamics.}

\rev{Discrete-gradient (DG) methods construct a step that exactly decreases a given Lyapunov/energy function, thereby preserving stability by construction~\cite{hernandez-solano_numerical_2023}. Related effective discretization schemes based on the Runge-Kutta (RK) family can preserve the Lyapunov decrease while reducing computational cost. In particular, reduced RK (RRK) methods simplify the standard RK update by removing numerically insignificant terms, maintaining the same stability properties with fewer operations~\cite{guedes_preservation_2025}.}

\rev{For homogeneous systems, Lyapunov-based discretizations can preserve exponential, FT, and FxT convergence rates and the Lyapunov function across the continuous-to-discrete transition~\cite{sanchez_lyapunov-based_2021}. More generally, a unified framework for translating Lyapunov arguments into discrete-time updates was proposed in~\cite{ushiyama_unified_2023}, ensuring that discrete algorithms inherit the same Lyapunov guarantees as their continuous-time counterparts. FT and FxT guarantees for convex optimization, including proximal and minimax variants, have been established in continuous-time and analyzed under forward-Euler discretization (with small step sizes)~\cite{garg_fixed-time_2023}.}

\rev{These results suggest that discrete implementations should be guided by Lyapunov inequalities rather than tied to a specific numerical scheme.}
\fi

\bibliographystyle{IEEEtran}
\bibliography{Bib1}
\end{document}